\theoremstyle{plain}
\newtheorem{theorem}{Theorem}[section]
\newtheorem{lemma}[theorem]{Lemma}
\newtheorem{proposition}[theorem]{Proposition}
\newtheorem{corollary}[theorem]{Corollary}
\theoremstyle{definition}
\newtheorem{definition}[theorem]{Definition}
\theoremstyle{remark}
\newtheorem{remark}[theorem]{Remark}
\newtheorem{question}[theorem]{Question}
\begin{document}
\title{Cuf products and cuf sums of (quasi)-metrizable spaces in $\mathbf{ZF}$}
\author{Kyriakos Keremedis and Eliza Wajch\\
Department of Mathematics, University of the Aegean\\
Karlovassi, Samos 83200, Greece\\
kker@aegean.gr\\
Institute of Mathematics\\
Faculty of Exact and Natural Sciences \\
Siedlce University of Natural Sciences and Humanities\\
ul. 3 Maja 54, 08-110 Siedlce, Poland\\
eliza.wajch@wp.pl}
\maketitle
\begin{abstract}
A cuf space (set, resp.) is a space (set, resp.) which is a countable union of finite subspaces (subsets, resp.). It is proved in $\mathbf{ZF}$ (with the absence of the axiom of choice) that all countable unions of cuf (denumerable, resp.) sets are cuf sets iff all countable products of cofinite cuf (denumerable, resp.) spaces are quasi-metrizable iff all countable products of one-point Hausdorff compactifications of infinite cuf (denumerable, resp.) spaces are quasi-metrizable.  A countable product of one-point Hausdorff compactifications of denumerable discrete spaces is first-countable iff it is quasi-metrizable. A model of $\mathbf{ZF}$ is shown in which a countable product two-point Hausdorff compactifications of denumerable discrete spaces is first-countable without being quasi-metrizable. Other relevant independence results are also proved.\medskip

\noindent\textit{Mathematics Subject Classification (2010)}: 03E25, 03E35, 54A35,\newline 54D35, 54E35 \newline 
\textit{Keywords}: Weak forms of the Axiom of Choice, cuf set, countable product, quasi-metrizability, cofinite cuf space, one-point Hausdorff compactification, two-point Hausdorff compactification, $\mathbf{ZF}$
\end{abstract}

\section{Preliminaries}
\label{s1}
In this paper, the intended context for reasoning and statements of theorems
is the Zermelo-Fraenkel set theory $\mathbf{ZF}$ without the axiom of choice 
$\mathbf{AC}$. In this section, we establish notation and terminology.  Section 2 is the introduction to the content of the paper. The main results of the paper are included in Section 3. Section 4 contains a list of open problems and comments on them. 

\subsection{A little more on the set-theoretic framework}
In this paper, the intended context for reasoning and statements of theorems
is the Zermelo-Fraenkel set theory $\mathbf{ZF}$ without the axiom of choice 
$\mathbf{AC}$. In this section, we establish notation and terminology.  Section 2 is the introduction to the content of the paper. The main results of the paper are included in Section 3. Section 4 contains a list of open problems. 

 The system $\mathbf{ZF+AC}$ is denoted by $\mathbf{ZFC}$. We recommend  \cite{ku1} and \cite{Ku} as a
good introduction to $\mathbf{ZF}$. To stress the fact that a result is proved in $\mathbf{ZF}$ or $\mathbf{ZF+A}$ (where $\mathbf{A}$ is a statement independent of $\mathbf{ZF}$), we shall write
at the beginning of the statements of the theorems and propositions ($%
\mathbf{ZF}$) or ($\mathbf{ZF+A}$), respectively. Apart from models of $\mathbf{ZF}$, we refer to some models
of $\mathbf{ZFA}$ (or $\text{ZF}^0$ in \cite{hr}), that is, we refer also to $\mathbf{ZF}$ with an infinite set of atoms (see \cite{j}, \cite{j1} and \cite{hr}). 

We denote by $\omega$ the set of all non-negative integers (i.e., finite ordinal numbers of von Neumann). As usual, if $n\in\omega$, then $n+1=n\cup\{n\}$. Members of the set $\mathbb{N}=\omega\setminus\{0\}$ are called natural numbers. The power set of a set $X$ is denoted by $\mathcal{P}(X)$. The set of all finite subsets of $X$ is denoted by $[X]^{<\omega}$. A set $X$ is called \emph{countable} if $X$ is equipotent with a subset of $\omega$. An infinite countable set is called \emph{denumerable}.

The set of all real numbers is denoted by $\mathbb{R}$ and, if it is not stated otherwise, $\mathbb{R}$ is considered with its usual topology and with the metric induced by the standard absolute value.

\subsection{Basic definitions}
\label{s1.1}

 Let us recall basic definitions and some known facts concerning quasi-metrics and bitopological spaces. 
 
A \textit{bitopological space} is a triple $\langle X, \tau_1, \tau_2\rangle$ where $X$ is a set and $\tau_1, \tau_2$ are topologies in $X$. A \textit{quasi-pseudometric} on a set $X$ is a function $d: X\times X\to [0, +\infty)$ such that, for all $x,y,z\in X$, $d(x,y)\leq d(x,z)+d(z,y)$ and $d(x,x)=0$. A quasi-pseudometric $d$ on $X$ is called a \textit{quasi-metric} if, for all $x,y\in X$, the condition $d(x,y)=0$ implies $x=y$ (cf. \cite{fl}, \cite{Kel}, \cite{kunzi}, \cite{wils}). A quasi-(pseudo)metric $d$ on $X$ is a (\emph{pseudo})\emph{metric} if $d(x,y)=d(y,x)$ for all $x,y\in X$.  A (quasi)-(pseudo)metric $d$ on $X$ is called \emph{non-Archimedean} if $d(x,y)\leq\max\{d(x,z), d(z, y)\}$ for all $x,y,z\in X$.   A (\emph{quasi})-(\emph{pseudo})\emph{metric space} is an ordered pair $\langle X, d\rangle$ where $X$ is a set and $d$ is a (quasi)-(pseudo)metric on $X$.

Let $d$ be a quasi-(pseudo)metric on $X$. The \emph{conjugate} of $d$ is the quasi-(pseudo)metric $d^{-1}$ defined by: 
$$d^{-1}(x, y)=d(y, x) \text{ for } x, y\in X.$$
The (\emph{pseudo})\emph{metric} $d^{\star}$ \emph{associated with} $d$ is defined by: 
$$d^{\star}(x,y)=\max\{d(x,y), d(y,x)\} \text{ for } x,y\in X.$$  
The $d$-\textit{ball with centre $x\in X$ and radius} $r\in(0, +\infty)$ is the set 
$$B_{d}(x, r)=\{ y\in X: d(x, y)<r\}.$$
 The collection 
$$\tau(d)=\{ V\subseteq X: \forall_{x\in V}\exists_{n\in\omega} B_{d}(x, \frac{1}{2^n})\subseteq V\}$$
is the \textit{topology in $X$ induced by $d$}. The triple $\langle X,\tau(d), \tau(d^{-1})\rangle$ is \emph{ the bitopological space associated with $d$}.  For a set $A\subseteq X$, let $\delta_d(A)=0$ if $A=\emptyset$, and let $\delta_d(A)=\sup\{d(x,y): x,y\in A\}$ if $A\neq \emptyset$. Then $\delta_d(A)$ is the \emph{diameter} of $A$ in $\langle X, d\rangle$.

Clearly a quasi-(pseudo)metric $d$ on $X$ is a (pseudo)metric if and only if $d=d^{-1}=d^{\star}$.

\begin{remark}
\label{r01.2}
Some authors use the name ``quasi-metric'' in a distinct sense than above (see, for instance, \cite{rb}). We shall not be concerned with the notion of a quasi-metric given in \cite{rb}.
\end{remark}

For a topological space $\mathbf{X}=\langle X, \tau\rangle$ and a subset $A$ of $X$, the closure of $A$ in $\mathbf{X}$ is denoted by  $\text{cl}_{\tau}(A)$ or $\text{cl}_{\mathbf{X}}(A)$; moreover, $\tau\mid_A=\{A\in U: U\in\tau\}$ and $\mathbf{A}=\langle A, \tau\mid_A\rangle$ is the subspace of $\mathbf{X}$ with the underlying set $A$. In the sequel, boldface letters will denote  (quasi)-(pseudo)metric or topological spaces (called spaces in abbreviation) and lightface letters will denote their underlying sets.

\begin{definition}
\label{d1.1}
(Cf. pp. 74--75 of \cite{Kel}.) If $\tau, \tau_1, \tau_2$ be topologies in a set $X$, then:
\begin{enumerate}
\item[(i)] the bitopological space $\langle X, \tau_1, \tau_2\rangle$ is called (\emph{quasi})-(\emph{pseudo})\emph{metrizable} if there exists a (quasi)-(pseudo)metric $d$ on $X$ such that $\tau_1=\tau(d)$ and $\tau_2=\tau(d^{-1})$; 
\item[(ii)] the topological space $\langle X, \tau\rangle$ is called (\emph{quasi})-(\emph{pseudo})\emph{metrizable} if there exists a (quasi)-(pseudo)metric $d$ on $X$ such that $\tau(d)=\tau$.
\end{enumerate}
\end{definition}

One can find a considerable number of quasi-metrization theorems (unfortunately, proved mainly in $\mathbf{ZFC}$) in \cite{An} and in other sources (see, for instance, \cite{fl}).

\begin{definition}
\label{d01.2} A collection $\mathcal{U}$ of subsets of a space $\mathbf{X}$ is called:
\begin{enumerate} 
\item[(i)] \emph{locally finite} if every point of $X$ has a neighborhood meeting only finitely many members of $\mathcal{U}$;
\item[(ii)] \emph{point finite} if every point of $X$ belongs to at most finitely many members of $\mathcal{U}$;
\item[(iii)] \emph{interior-preserving} if, for every $x\in X$, the set $\bigcap\{U\in\mathcal{U}: x\in U\}$ is open in $\mathbf{X}$.
\item[(iv)] $\sigma$-\emph{locally finite} (resp., $\sigma$-\emph{point-finite}, $\sigma$-\emph{interior-preserving}) if $\mathcal{U}$ is a countable union of locally finite (resp., point-finite, interior-preserving) subfamilies. 
\end{enumerate}
\end{definition}

\begin{definition}
\label{d1.3} Let $\mathbf{X}$ be a space. Then:
\begin{enumerate}
 \item[(i)] $\mathbf{X}$ is \emph{first-countable} if every point of $X$ has a countable base of neighborhoods;
 \item[(ii)] $\mathbf{X}$ is \emph{second-countable} if $\mathbf{X}$ has a countable base;
 \item[(iii)] (cf., e.g, \cite{KV}, p.17) $\mathbf{X}$ is of \emph{countable pseudocharacter} if every singleton of $X$ is of type $G_{\delta}$ in $\mathbf{X}$;
 \item[(iv)] (cf., e.g., \cite{KV}, p. 16)  if $x\in X$, then a family $\mathcal{U}$ of open sets of $\mathbf{X}$ is called a \emph{pseudobase} at $x$ if $\{x\}=\bigcap\mathcal{U}$. 
 \end{enumerate}
\end{definition}

Given a collection  $\{X_j: j\in J\}$ of sets, for every $i\in J$, we denote by $\pi_i$ the projection $\pi_i:\prod\limits_{j\in J}X_j\to X_i$ defined by $\pi_i(x)=x(i)$ for each $x\in\prod\limits_{j\in J}X_j$. If $\tau_j$ is a topology in $X_j$, then $\mathbf{X}=\prod\limits_{j\in J}\mathbf{X}_j$ denotes the Tychonoff product of the topological spaces $\mathbf{X}_j=\langle X_j, \tau_j\rangle$ with $j\in J$. If  $\{\langle X_j, \tau_{1,j}, \tau_{2,j}\rangle: j\in J\}$ is a family of bitopological spaces, then $\prod\limits_{j\in J}\langle X_j, \tau_{1,j}, \tau_{2,j}\rangle$ is the bitopological space $\langle \prod\limits_{j\in J}X_j, \tau_1, \tau_2\rangle$ where $\tau_i$ is the topology of $\prod\limits_{j\in J}\langle X_{j}, \tau_{i,j}\rangle$ for $i\in\{1,2\}$.

We recall that if $\prod\limits_{j\in J}X_j\neq\emptyset$, then it is said that the family $\{X_j: j\in J\}$ has a choice function, and every element of $\prod\limits_{j\in J}X_j$ is called a \emph{choice function} of the family $\{X_j: j\in J\}$. A \emph{multiple choice function} of $\{X_j: j\in J\}$ is every function $f\in\prod\limits_{j\in J}\mathcal{P}(X_j)$ such that, for every $j\in J$, $f(j)$ is a non-empty finite subset of $X_j$. A set $f$ is called \emph{partial} (\emph{multiple}) \emph{choice function} of $\{X_j: j\in J\}$ if there exists an infinite subset $I$ of $J$ such that $f$ is a (multiple) choice function of $\{X_j: j\in I\}$. Given a non-indexed family $\mathcal{A}$, we treat $\mathcal{A}$ as an indexed family $\mathcal{A}=\{x: x\in\mathcal{A}\}$ to speak about a  (partial) choice function and a (partial) multiple choice function of $\mathcal{A}$.

We recall that if $\{\mathbf{X}_{n}=\langle X_{n},d_{n}\rangle:n\in 
\mathbb{N}\}$ is a family of (quasi)-(pseudo)metric spaces, then, for $X=\prod\limits_{n\in \mathbb{N}}X_{n}$, the function $d:X\times
X\rightarrow \mathbb{R}$ given by: 
\begin{equation}
d(x,y)=\sum\limits_{n\in \mathbb{N}}\frac{ \min\{d_n(x(n),y(n)), 1\}}{2^{n}}
\label{0}
\end{equation}%
for all $x,y\in X$, is
a (quasi)-(pseudo)metric on $X$ and the topology $\tau(d)$ in $X$ coincides with
the product topology of the family of spaces $\{\langle X_{n}, \tau(d_n)\rangle:n\in \mathbb{N%
}\}$ (cf., e.g., \cite{En} and  \cite{w}).

Let  $\{X_j: j\in J\}$ be a disjoint family of sets, that is, $X_i\cap X_j=\emptyset$ for each pair $i,j$ of distinct elements of $J$. If $\tau_j$ is a topology in $X_j$ for every $j\in J$, then $\bigoplus\limits_{j\in J}\mathbf{X}_j$ denotes the direct sum of the spaces $\mathbf{X}_j=\langle X_j, \tau_j\rangle$ with $j\in J$ (cf., e.g., \cite{En}). If  $\{\langle X_j, \tau_{1,j}, \tau_{2,j}\rangle: j\in J\}$ is a family of bitopological spaces, then $\bigoplus\limits_{j\in J}\langle X_j, \tau_{1,j}, \tau_{2,j}\rangle$ is the bitopological space $\langle \bigcup\limits_{j\in J}X_j, \tau_1, \tau_2\rangle$ where $\tau_i$ is the topology of $\bigoplus\limits_{j\in J}\langle X_{j}, \tau_{i,j}\rangle$ for $i\in\{1,2\}$.  Given a family $\{d_j: j\in J\}$ such that, for every $j\in J$, $d_j$ is a (quasi)-(pseudo)metric on $X_j$, one can define a (quasi)-(pseudo)metric $d$ on $X=\bigcup\limits_{j\in J}X_j$ as follows:
$$
(\ast) d(x,y)=\left\{ 
\begin{array}{c}
1 \text{ if there exist $i,j\in J$ such that } i\neq j, x\in X_i \text{ and } y\in X_j, \\ 
\min\{d_j(x,y), 1\}\text{ if there exists } j\in J \text{ such that }  x,y\in X_j.
\end{array}
\right.
$$
Then $\tau(d)$, where $d$ is defined by ($\ast$), coincides with the topology of the direct sum $\bigoplus\limits_{j\in j}\langle X_j, \tau(d_j)\rangle$, and the metric space $\langle \bigcup\limits_{j\in J}X_j, d\rangle$ is called the direct sum of the family $\{\langle X_j, d_j\rangle: j\in J\}$. In abbreviation, direct sums are called \emph{sums}.

\begin{definition}
\label{d1.5}
(Cf. \cite{hdkr}, \cite{hdhkr}, \cite{HowTach} and Form 419 in \cite{hr1}.) A \emph{cuf set} (resp., \emph{cuf family of sets}) is a set (resp., family) which is expressible as a countable union of finite sets (resp., finite subfamilies). 
\end{definition}

\begin{definition} 
\label{d1.6}
A \emph{cuf space} is a topological space $\langle X, \tau\rangle$ such that $X$ is a cuf set.
\end{definition}

\begin{definition}
\label{d1.7} 
 A \emph{cofinite space} is a topological space $\langle X, \tau\rangle$ such that 
 $$\tau=\{\emptyset\}\cup\{U\subseteq X: X\setminus U\in [X]^{<\omega}\},$$
that is, $\tau$  is the \emph{cofinite topology} in $X$.
A \emph{cofinite cuf space} is a cuf space equipped with its cofinite topology.
\end{definition}

\begin{definition}
\label{d1.8} 
Let $J$ be a non-empty set and let $\mathcal{X}=\{\mathbf{X}_j: j\in J\}$ be a family of topological spaces. 
\begin{enumerate}
\item[(i)] If $J$ is a cuf set, then the product $ \prod\limits_{j\in J}\mathbf{X}_j$ is called a \emph{cuf product} of the family $\mathcal{X}$.
\item[(ii)] If $J$ is a cuf set and the collection $\mathcal{X}$ is disjoint, then the direct sum $\bigoplus\limits_{j\in J}\mathbf{X}_j$ is called a \emph{cuf sum} of the family $\mathcal{X}$.
\end{enumerate}
\end{definition}

Analogously, we can introduce cuf products and cuf sums of bitopological spaces.

\begin{definition} 
\label{d1.9}
 A \emph{Hausdorff compactification} of a space $\mathbf{X}$ is an ordered pair $\langle \gamma\mathbf{X}, \gamma\rangle$ such that $\gamma\mathbf{X}$ is compact Hausdorff space and $\gamma$ is a homeomorphism of $\mathbf{X}$ onto a dense subspace of $\gamma\mathbf{X}$.
\end{definition}

\begin{remark}
\label{r1.10} 
Usually, a Hausdorff compactification $\langle \gamma\mathbf{X}, \gamma\rangle$  of $\mathbf{X}$ is denoted by $\gamma\mathbf{X}$, $\mathbf{X}$ is identified with the subspace $\gamma(\mathbf{X})$ of $\gamma\mathbf{X}$, and $\gamma$ is identified with the identity mapping on $X$. We denote the underlying set of $\gamma\mathbf{X}$ by $\gamma X$. Basic facts on Hausdorff compactifications in $\mathbf{ZFC}$ can be found in \cite{Ch}. Basic results on Hausdorff compactifications in $\mathbf{ZF}$ appeared in \cite{kerw}.
\end{remark}

\begin{definition}
\label{d01.11}
Suppose that $\gamma\mathbf{X}$ is a Hausdorff compactification of $\mathbf{X}$. 
\begin{enumerate} 
\item[(i)] The set $\gamma X\setminus X$ is called the \emph{remainder} of $\gamma\mathbf{X}$.
\item[(ii)] If $n\in\mathbb{N}$ and the remainder of $\gamma\mathbf{X}$ is an $n$-element set, then $\gamma\mathbf{X}$ is called an \emph{$n$-point Hausdorff compactification} of $\mathbf{X}$.
\item[(iii)] Every one-point Hausdorff compactification of $\mathbf{X}$ is called the \emph{Alexandroff compactification} of $\mathbf{X}$.
\end{enumerate}
\end{definition}

\begin{definition}
\label{d4.5}
(Cf. \cite{br}, \cite{lo} and \cite{kerta}.) 
\begin{enumerate}
\item[(i)] A space $\mathbf{X}$ is said to be \emph{Loeb} if $\mathbf{X}$ is empty or the family of all non-empty closed subsets of $\mathbf{X}$ has a choice function.
\item[(ii)] If $\mathbf{X}$ is a non-empty Loeb space, then every choice function of the family of all non-empty closed subsets of $\mathbf{X}$ is called a  \emph{Loeb function} of $\mathbf{X}$.
\end{enumerate}
\end{definition}

All other topological notions not defined here are standard and can be found in \cite{w}, \cite{En} and \cite{KV}.

\subsection{The list of weaker forms of $\mathbf{AC}$}
\label{s1.2}

In this subsection, for readers' convenience, we define and denote all weaker forms of $\mathbf{AC}$ used directly in this paper. For the known forms given in \cite{hr}, \cite{hr1} or \cite{hh}, we quote in their statements the form number under
which they are recorded in \cite{hr} (or in \cite{hr1} if they do not appear in \cite{hr}) and, if possible, we refer to their definitions in \cite{hh}. 

\begin{definition}
\label{d1.10}
\begin{enumerate}
\item $\mathbf{CAC}$ (Form 8 of \cite{hr}, Definition 2.5 in \cite{hh}): Every denumerable family of non-empty sets has a choice function.

\item  $\mathbf{CAC}(\mathbb{R})$ (Form 94 of \cite{hr}, Definition 2.9(1) in \cite{hh}): Every denumerable family of non-empty subsets of $\mathbb{R}$ has a choice function.

\item $\mathbf{CAC}_{fin}$ (Form 10 of \cite{hr}, Definition 2.9(3) in \cite{hh}): Every denumerable family of non-empty finite sets has a choice function.

\item $\mathbf{PC}(\aleph_0, 2, \infty)$ (Form [18 A] in \cite{hr}): Every denumerable family of two-element sets has a partial choice function. 

\item $\mathbf{CMC}$ (Form 126 of \cite{hr}, Definition 2.10 in \cite{hh}): Every denumerable family of non-empty sets has a multiple choice function.

\item $\mathbf{CMC}_{\omega }$ (Form 350 of \cite{hr}): Every denumerable family of denumerable sets has a multiple choice function.

\item $\mathbf{CUC}$ (Form 31 of \cite{hr}, Definition 3.2(1) in \cite{hh}): Every countable union of
countable sets is countable.

\item $\mathbf{CUC}_{fin}$ (Definition 3.2(3) in \cite{hh}): Every countable union of finite sets is countable.

\item $\mathbf{UT}(\aleph_0, cuf, cuf)$ (Form 419 in \cite{hr1}): Every countable union of cuf sets is a cuf set. (Cf. also \cite{hdhkr}.)

\item $\mathbf{UT}(\aleph_0, \aleph_0, cuf)$ (Form 420 in \cite{hr1}): Every countable union of countable sets is a cuf set. (Cf. also \cite{hdhkr}.)

\item $\mathbf{UT}(cuf, cuf, cuf)$: Every cuf union of cuf sets is a cuf set. (Cf. also \cite{hdhkr}.)

\item $\mathbf{vDCP}(\aleph_0)$ (Form 119 in \cite{hr}, p. 79 in \cite{hh}, \cite{vD}): Every denumerable family $\{\langle A_n, \leq_n\rangle: n\in\omega\}$ of linearly ordered sets, each of which is order-isomorphic to the set $\langle \mathbb{Z}, \leq\rangle$ of integers with the standard linear order $\leq$, has a choice function.

\item $\mathbf{BPI}$ (Form 14 in \cite{hr}, Definition 2.15(1) in \cite{hh}):  Every Boolean algebra has a prime ideal. 

\item $\mathbf{DC}$ (Form 43 in \cite{hr}, Definition 2.11(1) in \cite{hh}): For every non-empty set $X$ and every binary relation $\rho$ on $X$ if, for each $x\in X$ there exists $y\in X$ such that $x\rho y$, then  there exists a sequence $(x_n)_{n\in\mathbb{N}}$ of points of $X$ such that $x_n\rho x_{n+1}$ for each $n\in\mathbb{N}$.

\end{enumerate}
\end{definition}

\begin{remark}
\label{r1.11} (i) It is known that, in $\mathbf{ZF}$,  $\mathbf{CAC}_{fin}$ and $\mathbf{CUC}_{fin}$ are both equivalent in $\mathbf{ZF}$ (see Diagram 3.4 on page 23 in \cite{hh}) to the sentence: Every infinite well-ordered family of
non-empty finite sets has a partial choice function. (See Form [10 O] in \cite{hr} and Diagram 3.4 on p. 23 in \cite{hh}.) Moreover, 
 $\mathbf{CAC}_{fin}$ is equivalent (in $\mathbf{ZF}$) to Form [10 E] of \cite{hr}, that is, to the sentence: Every denumerable family of non-empty finite sets has a partial choice function.\smallskip
 
(ii) $\mathbf{CAC}$ is equivalent to the sentence: Every denumerable family of non-empty sets has a partial choice function. (See Form [8 A] in \cite{hr}.)\smallskip

(iii) It is also well known that $\mathbf{BPI}$ is equivalent to the statement that all products of compact Hausdorff spaces are compact (see Form [14 J] in \cite{hr} and Theorem 4.37 in \cite{hh}).\smallskip

(iv) It can be shown that $\mathbf{CMC}_{\omega}$ is equivalent to the following sentence: Every denumerable family of denumerable sets has a multiple choice function. $\mathbf{PC}(\aleph_0, 2, \infty)$ is equivalent to the sentence: Every denumerable disjoint family of 2-element sets has a choice function. (See Form 18 in \cite{hr}). 
\end{remark}

Let us pass to definitions of forms concerning metrizable spaces.

\begin{definition}
\label{d1.12}
\begin{enumerate}

\item $\mathbf{MP}$ (Form 383 in \cite{hr}): Every metrizable space is paracompact.

\item $\mathbf{M}(\sigma-p.f.)$ (Form 233 in \cite{hr}): Every metrizable space has a $\sigma$-point-finite base.

\item $\mathbf{M}(\sigma-l.f.)$ (Form [232 B] in \cite{hr}): Every metrizable space has a $\sigma$-locally finite base.

\item $\mathbf{CPM}(C, C)$: All countable products of compact metric spaces are compact.

\item $\mathbf{CPM}(C, S)$: All countable products of compact metric spaces are separable.

\item $\mathbf{CPM}_{le}$: Every countable product of metrizable spaces is
metrizable. (Cf. \cite{kt}.)

\item $\mathbf{CSM}_{le}$ (Form 4.18 in \cite{hr1}): Every countable sum of
metrizable spaces is metrizable. (Cf. \cite{kt}.)

\item $\mathbf{CPM}_{le}(C,M)$: Every countable product of compact metrizable spaces is metrizable.

\item $\mathbf{CSM}_{le}(C,M)$: Every countable sum of compact metrizable spaces is metrizable.
\end{enumerate}
\end{definition} 

Furthermore, in the following definition, we introduce new forms involving quasi-metrizability.

\begin{definition}
\label{d1.16}
\begin{enumerate}
\item $\mathbf{CPQM}$: All countable products of quasi-metrizable spaces are
quasi-metrizable.

\item $\mathbf{CSQM}$: Every countable sum of quasi-metrizable spaces is quasi- metrizable. 

\item $\mathbf{CPQMBi}$: All countable products of quasi-metrizable bitopological spaces are
quasi-metrizable.

\item $\mathbf{CSQMBi}$: All countable sums of
quasi-metrizable bitopological spaces are quasi-metrizable. 

\item $\mathbf{CPcofinQM}$: Every countable product of quasi-metrizable cofinite spaces is quasi-metrizable.

\item $\mathbf{CPcofin(\omega)QM}$: Every countable product of quasi-metrizable countable cofinite spaces is quasi-metrizable.
\end{enumerate}
\end{definition}

Other forms independent of $\mathbf{ZF}$ are defined in the forthcoming sections. 

\section{Introduction}
\label{s2}
\subsection{About the content of the paper}
\label{s2.1}

In an effort to demonstrate surprising disasters of topology in $\mathbf{ZF}$, it was established in many articles (see, e.g., \cite{vD}, \cite{gt}, \cite{HowTach}, \cite{kl}, \cite{kk}-\cite{kt}, \cite{t}-\cite{ew}) and books (see, e.g., \cite{hh}, \cite{hr}, \cite{j}) that a lot of everyday theorems of topology in $\mathbf{ZFC}$ may fail in some models of $\mathbf{ZF}$. In this paper, we concentrate mainly on several disasters concerning cuf products and cuf sums of (quasi)-metrizable spaces in $\mathbf{ZF}$. In \cite{ew}, it was noticed that $\mathbf{CPQM}$  implies van Douwen's principle $\mathbf{vDCP}(\aleph_0)$ and, in consequence $\mathbf{CPQM}$ is independent of $\mathbf{ZF}$. In the review of \cite{ew} in Zentralblatt Math. (see \cite{Tach1}), E. Tachtsis remarked that $\mathbf{CPQM}$ implies $\mathbf{CMC}_{\omega}$ and $\mathbf{CMC}_{\omega}$ is formally stronger than $\mathbf{vDCP}(\aleph_0)$. Being inspired by \cite{ew} and \cite{hdkr}, P. Howard and E. Tachtsis, have proved the following deep theorem in \cite{HowTach} recently:

\begin{theorem}
\label{t02.1}
(Cf. Theorems 4.1 and 4.2 in \cite{HowTach}). 
The following implications hold in $\mathbf{ZF}$ and none of them is reversible in $\mathbf{ZF}$:
 $$\mathbf{CPQM}\to \mathbf{UT}(\aleph_0, cuf, cuf)\to \mathbf{CMC}_{\omega}\to \mathbf{vDCP}(\aleph_0).$$
\end{theorem}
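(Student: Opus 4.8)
The plan is to dispatch the three implications first --- each is short and uses only elementary topology --- and then to treat the three non-reversibility claims, which are the substantive part and require purpose-built permutation models. I would prove the implications from right to left.

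\emph{The implication $\mathbf{CMC}_{\omega}\to\mathbf{vDCP}(\aleph_0)$.} Given a denumerable family $\{\langle A_n,\leq_n\rangle:n\in\omega\}$ with each $\langle A_n,\leq_n\rangle$ order-isomorphic to $\langle\mathbb{Z},\leq\rangle$, every $A_n$ is denumerable, so $\mathbf{CMC}_{\omega}$ yields a multiple choice function $g$, i.e.\ $g(n)$ is a non-empty finite subset of $A_n$ for each $n$. The map $n\mapsto\min_{\leq_n}g(n)$ is then a choice function of $\{A_n:n\in\omega\}$: a non-empty finite subset of a linearly ordered set has a least element, singled out without any appeal to choice.

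\emph{The implication $\mathbf{UT}(\aleph_0, cuf, cuf)\to\mathbf{CMC}_{\omega}$.} Let $\{D_n:n\in\omega\}$ be a denumerable family of denumerable sets. Each $D_n$ is a cuf set, so by hypothesis $D=\bigcup_{n\in\omega}D_n$ is a cuf set; fix one witnessing decomposition $D=\bigcup_{k\in\omega}F_k$ with each $F_k$ finite and $F_0\subseteq F_1\subseteq\cdots$. Since each $D_n$ is non-empty and included in $D$, the number $k(n)=\min\{k\in\omega:F_k\cap D_n\neq\emptyset\}$ is defined, and $n\mapsto F_{k(n)}\cap D_n$ is a multiple choice function of $\{D_n:n\in\omega\}$. (Only the weaker statement that a countable union of denumerable sets is a cuf set is used here.)

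\emph{The implication $\mathbf{CPQM}\to\mathbf{UT}(\aleph_0, cuf, cuf)$.} Let $\{A_n:n\in\omega\}$ be a family of cuf sets; a finite index set is trivial and an arbitrary countable family reduces to an $\omega$-indexed one via a bijection. For each $n$ I would put the discrete topology on $A_n$ and let $\mathbf{X}_n$ be its Alexandroff one-point Hausdorff compactification, with remainder $\{\infty_n\}$ for some $\infty_n\notin A_n$. Since $A_n$ is a cuf set, $\mathbf{X}_n$ is metrizable: given any witness $A_n=\bigcup_k F_k$ with the $F_k$ finite and increasing, writing $r(x)=\min\{k:x\in F_k\}$ for $x\in A_n$, the assignment $d(x,\infty_n)=d(\infty_n,x)=2^{-r(x)}$ and $d(x,y)=\max\{2^{-r(x)},2^{-r(y)}\}$ for distinct $x,y\in A_n$ is a metric inducing the topology of $\mathbf{X}_n$. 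In particular each $\mathbf{X}_n$ is quasi-metrizable, so by $\mathbf{CPQM}$ the product $\mathbf{X}=\prod_{n\in\omega}\mathbf{X}_n$ is quasi-metrizable, hence first-countable and $T_1$ (in $\mathbf{ZF}$: the balls $B_d(x,2^{-k})$ form an open base at $x$, and $d(x,y)>0$ whenever $x\neq y$). The point $\theta$ of $\mathbf{X}$ with $\theta(n)=\infty_n$ for all $n$ is definable, so no choice is needed to exhibit it, and it has a countable neighbourhood base $\{V_m:m\in\omega\}$. For $n,m\in\omega$ let $G_{n,m}=\{a\in A_n:\theta^{n}_{a}\notin V_m\}$, where $\theta^{n}_{a}\in\mathbf{X}$ agrees with $\theta$ off coordinate $n$ and has $n$-th coordinate $a$. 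The coordinate-insertion map $t\mapsto\theta^{n}_{t}$ is continuous, so $\{t\in X_n:\theta^{n}_{t}\in V_m\}$ is an open neighbourhood of $\infty_n$ in $\mathbf{X}_n$, hence cofinite there; thus $G_{n,m}$ is finite. Conversely, if $a\in A_{n_0}$ then $\theta^{n_0}_{a}\neq\theta$, so by $T_1$-ness and the fact that $\{V_m\}$ is a neighbourhood base at $\theta$ there is $m$ with $\theta^{n_0}_{a}\notin V_m$, i.e.\ $a\in G_{n_0,m}$. Hence $\bigcup_{n\in\omega}A_n=\bigcup_{(n,m)\in\omega\times\omega}G_{n,m}$ is a countable union of finite sets, so it is a cuf set. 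The delicate point throughout is to keep every step choice-free, which is exactly why the distinguished point $\theta$ and the enumeration-free definition of the $G_{n,m}$ are needed.

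\emph{Non-reversibility.} This is the hard part and, in contrast to the implications, cannot be done by a direct argument: for each arrow one must produce a model of $\mathbf{ZF}$ in which the source principle holds and the target fails, i.e.\ a model of $\mathbf{vDCP}(\aleph_0)+\neg\mathbf{CMC}_{\omega}$, a model of $\mathbf{CMC}_{\omega}+\neg\mathbf{UT}(\aleph_0, cuf, cuf)$, and a model of $\mathbf{UT}(\aleph_0, cuf, cuf)+\neg\mathbf{CPQM}$. The plan is to build appropriate Fraenkel--Mostowski permutation models, arranging the atoms into $\omega$-many blocks carrying exactly the structure needed (a linear order of type $\mathbb{Z}$, a bare denumerable set, or a set with a large automorphism group, respectively), so that the weaker selection principle survives while the stronger one is destroyed, and then to transfer each model to a model of $\mathbf{ZF}$ via the Jech--Sochor embedding theorem or a Pincus transfer theorem after checking that the principles involved are of suitable syntactic form. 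The main obstacle in each case is the positive verification --- showing that the weaker principle actually holds in the model --- which demands a careful support/homogeneity analysis of all possible witnesses, and is most delicate for $\mathbf{UT}(\aleph_0, cuf, cuf)$ in the third model. For the failure side of that model one also has to isolate a concrete countable family of quasi-metrizable spaces whose product fails to be quasi-metrizable, for instance one for which no sequence of compatible (quasi-)metrics can be selected although all countable unions of cuf sets remain cuf.
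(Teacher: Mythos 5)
This statement is quoted from Howard--Tachtsis (Theorems 4.1 and 4.2 of \cite{HowTach}); the paper gives no proof of its own, so there is nothing internal to compare against. Judged on its own terms, your three implications are correct and essentially the standard arguments: the derivation of $\mathbf{CMC}_{\omega}$ from $\mathbf{UT}(\aleph_0,cuf,cuf)$ via an increasing finite decomposition and the indices $k(n)$ is exactly the device the authors themselves use in the step (iv)$\rightarrow$(v) of Theorem \ref{t03.21}, and your derivation of $\mathbf{UT}(\aleph_0,cuf,cuf)$ from $\mathbf{CPQM}$ via one-point compactifications, the definable point $\theta$, and the finite sets $G_{n,m}$ parallels the second half of the proof of Theorem \ref{t03.18} (which uses cofinite topologies and projections $\pi_n(V)$ in place of coordinate insertions; either works, and your continuity/cofiniteness argument for the finiteness of $G_{n,m}$ is sound).

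The genuine gap is the non-reversibility of the three arrows, which is the substantive content of the cited theorems and which you do not prove. Announcing that one will ``build appropriate Fraenkel--Mostowski permutation models ... so that the weaker selection principle survives while the stronger one is destroyed'' and then transfer via Jech--Sochor or Pincus is a description of the task, not a solution of it: for each arrow you must actually exhibit the blocks of atoms, the permutation group, and the support ideal, verify in detail that the weaker principle holds in the resulting model (this is the hard direction, as you yourself note, and for $\mathbf{CMC}_{\omega}+\neg\mathbf{UT}(\aleph_0,cuf,cuf)$ and $\mathbf{UT}(\aleph_0,cuf,cuf)+\neg\mathbf{CPQM}$ it requires a nontrivial homogeneity analysis of arbitrary potential witnesses, not just of the obvious ones), and check that the statements are injectively boundable or otherwise transferable. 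None of that is present, so the ``none of them is reversible'' half of the theorem remains unestablished in your write-up. If you do not intend to reconstruct the Howard--Tachtsis models, the honest course is to prove the three implications and cite \cite{HowTach} for the independence results, which is precisely what the paper does.
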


The following theorem was established in \cite{hdkr}:

\begin{theorem}
\label{t02.2} 
(Cf. \cite{hdkr}.) $\mathbf{UT}(\aleph_0, cuf, cuf)$ and $\mathbf{UT}(cuf, cuf, cuf)$ are equivalent in $\mathbf{ZF}$.
\end{theorem}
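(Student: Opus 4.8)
The plan is to establish the two implications separately: one direction is essentially immediate, and the other requires a short argument that is careful about where choice enters.

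For the direction $\mathbf{UT}(cuf, cuf, cuf)\to\mathbf{UT}(\aleph_0, cuf, cuf)$, I would first record the elementary observation that, in $\mathbf{ZF}$, every countable set is a cuf set: if $X$ is countable, fix an injection $f\colon X\to\omega$; then $X=\bigcup_{n\in\omega}\{x\in X: f(x)=n\}$ exhibits $X$ as a countable union of sets each of size at most one, hence as a cuf set. Consequently, any countable family $\{C_n: n\in\omega\}$ of cuf sets is in particular a cuf family of cuf sets indexed by the cuf set $\omega$, so $\mathbf{UT}(cuf, cuf, cuf)$ yields that $\bigcup_{n\in\omega}C_n$ is a cuf set. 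This gives $\mathbf{UT}(\aleph_0, cuf, cuf)$.

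For the substantial direction $\mathbf{UT}(\aleph_0, cuf, cuf)\to\mathbf{UT}(cuf, cuf, cuf)$, suppose $\{A_j: j\in J\}$ is a family of cuf sets with $J$ a cuf set, and put $A=\bigcup_{j\in J}A_j$. Since $J$ is a cuf set, fix a sequence $(F_n)_{n\in\omega}$ of finite subsets of $J$ with $J=\bigcup_{n\in\omega}F_n$; replacing $F_n$ by $F_0\cup\cdots\cup F_n$ we may assume the sequence is increasing. For each $n\in\omega$ let $B_n=\bigcup_{j\in F_n}A_j$. The key point is that each $B_n$ is a cuf set: $F_n$ is finite, and for each of the finitely many $j\in F_n$ there exists a sequence of finite sets with union $A_j$, so a single finite choice produces simultaneously such sequences $(A_j^k)_{k\in\omega}$ for all $j\in F_n$; then $B_n=\bigcup_{k\in\omega}\bigl(\bigcup_{j\in F_n}A_j^k\bigr)$ is a countable union of finite sets. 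Now $\{B_n: n\in\omega\}$ is a countable family of cuf sets and $\bigcup_{n\in\omega}B_n=A$ because $J=\bigcup_{n\in\omega}F_n$, so $\mathbf{UT}(\aleph_0, cuf, cuf)$ gives that $A$ is a cuf set.

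I expect the only delicate step to be the verification that each $B_n$ is a cuf set without using choice beyond what $\mathbf{ZF}$ allows; this is legitimate precisely because $F_n$ is finite, so selecting one cuf-witnessing sequence for each $A_j$ with $j\in F_n$ is a finite choice. It is also worth stressing that $\mathbf{UT}(\aleph_0, cuf, cuf)$ is invoked for $\{B_n: n\in\omega\}$ in its ``non-uniform'' formulation — each member is a cuf set, but no uniformly chosen sequence of witnesses for the whole family is assumed — which is exactly the form in which the principle is stated. Beyond this, no real obstacle arises and the whole argument stays within $\mathbf{ZF}$.
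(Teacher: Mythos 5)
Your proof is correct. Note that the paper itself offers no proof of this statement—it is quoted from the literature (the de la Cruz–Hall–Howard–Keremedis–Rubin work on unions and the axiom of choice)—so there is nothing internal to compare against; your argument is the standard one: the direction $\mathbf{UT}(cuf,cuf,cuf)\to\mathbf{UT}(\aleph_0,cuf,cuf)$ is immediate since $\omega$ is a cuf set, and for the converse you correctly isolate the only delicate point, namely that merging cuf-witnessing sequences over each finite block $F_n$ requires only a finite choice, which is available in $\mathbf{ZF}$, after which $\mathbf{UT}(\aleph_0,cuf,cuf)$ applies to the countable family $\{B_n:n\in\omega\}$ without any uniform choice of witnesses.
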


In Section \ref{s3}, among other results on cuf products and cuf sums of quasi-metrizable spaces, we show several new equivalences of $\mathbf{UT}(\aleph_0, cuf, cuf)$. In particular, we prove that $\mathbf{UT}(\aleph_0, cuf, cuf)$ and $\mathbf{CPcofinQM}$ are both equivalent to the sentence: Every cuf product of cofinite cuf spaces is first-countable.  We also prove that $\mathbf{UT}(\aleph_0, \aleph_0, cuf)$  and $\mathbf{CPcofin(\omega)QM}$ are both equivalent to the sentence: Every countable product of countable cofinite spaces is first-countable. Moreover, we show that $\mathbf{CPcofin(\omega)QM}$ implies $\mathbf{CMC}_{\omega}$. We also prove the sentence  ``For every family $\{\langle X_n, \tau_n\rangle: n\in\mathbb{N}\}$ of topological spaces, if there exists a sequence $(\mathcal{B}_n)_{n\in\mathbb{N}}$ such that each $\mathcal{B}_n$ is a $\sigma$-locally finite (resp., $\sigma$-point finite) base of $\langle X_n, \tau_n\rangle$, then $\prod\limits_{n\in\mathbb{N}}\langle X_n, \tau_n\rangle$ has a $\sigma$-locally finite (resp., $\sigma$-point finite) base'' follows from $\mathbf{CMC}$ and implies $\mathbf{UT}(\aleph_0, \aleph_0, cuf)$.  We notice that, in $\mathbf{ZF}$, a countable product of one-point Hausdorff compactifications of denumerable discrete spaces is (quasi)-metrizable if and only if it is first-countable. We show that the situation of countable products of two-point Hausdorff compactifications of denumerable discrete spaces is quite different. Namely, we prove that there is a model of $\mathbf{ZF}$ in which a countable product of two-point Hausdorff compactifications of denumerable discrete spaces can be first-countable without being quasi-metrizable. A list of open problems is included in Section \ref{s4}. In Subsection \ref{s4.2}, we give partial answers to two of the open problems by showing that it holds in $\mathbf{ZF}$ that every quasi-metrizable, compact Hausdorff Loeb space is metrizable and $\mathbf{CAC}$ implies that every quasi-metrizable compact Hausdorff space is metrizable.

\subsection{A list of several known theorems}
\label{s2.2}

We list below some known theorems for future references.

\begin{theorem}
\label{t02.5}
 (Cf. Theorem 3.3 in \cite{hdhkr}.) There exists a model of $\mathbf{ZF}$ in which $\mathbf{UT}(\aleph_0, \aleph_0, cuf)$ holds but $\mathbf{UT}(\aleph_0, cuf, cuf)$ fails.
\end{theorem}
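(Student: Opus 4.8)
The implication $\mathbf{UT}(\aleph_0,cuf,cuf)\to\mathbf{UT}(\aleph_0,\aleph_0,cuf)$ is immediate (a countable set is cuf), so the content is that this implication is not reversible, and one cannot cut it down to a coarser invariant since both forms imply $\mathbf{CMC}_{\omega}$ (given a denumerable family of denumerable sets, a cuf decomposition of the union yields a multiple choice function). The plan, therefore, is to separate the two principles inside a permutation (Fraenkel--Mostowski) model of $\mathbf{ZFA}$ and then move the result to a model of $\mathbf{ZF}$ by the Jech--Sochor embedding theorem together with the Pincus transfer machinery; this transfer is legitimate here because the failure of $\mathbf{UT}(\aleph_0,cuf,cuf)$ will be witnessed by an explicitly described countable family of sets hereditarily built over the set of atoms (hence an injectively boundable statement that survives the embedding), while $\mathbf{UT}(\aleph_0,\aleph_0,cuf)$ is a $\Pi$-type statement that transfers by the standard machinery for such statements.

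For the permutation model $\mathcal{N}$ I would let the set $A$ of atoms split into denumerably many pairwise disjoint denumerable blocks, $A=\bigsqcup_{n\in\omega}A_n$, each block $A_n$ carrying enough internal structure (cheapest: a further partition of $A_n$ into denumerably many two-element ``slots'' on which the group acts by internal swaps) to guarantee that $A_n$ is a cuf set which is not well-orderable in $\mathcal{N}$. The permutation group $\mathcal{G}$ of $A$ and the normal filter of subgroups are to be engineered so that two things happen simultaneously. First, the sequence $(A_n)_{n\in\omega}$ lies in $\mathcal{N}$ and, for each fixed $n$, some countable sequence of finite subsets of $A_n$ with union $A_n$ lies in $\mathcal{N}$ (so every $A_n$ really is cuf in $\mathcal{N}$), yet the assignment $n\mapsto(\text{a cuf decomposition of }A_n)$ admits no finite support and so is not a member of $\mathcal{N}$; a routine support computation then shows that $\bigcup_{n\in\omega}A_n$ is not cuf in $\mathcal{N}$, whence $\{A_n:n\in\omega\}$ witnesses $\neg\mathbf{UT}(\aleph_0,cuf,cuf)$. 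Second, ``being countable'' must be kept tame in $\mathcal{N}$: whenever $(X_m)_{m\in\omega}\in\mathcal{N}$ is a sequence of sets each countable in $\mathcal{N}$, the common support $S_0$ of the sequence should already have only finite orbits on $\bigcup_m X_m$ and the orbit partition should be enumerable inside $\mathcal{N}$, so that $\bigcup_m X_m$ is cuf and $\mathbf{UT}(\aleph_0,\aleph_0,cuf)$ holds.

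I expect the negative clause to be essentially bookkeeping: freezing the structure of one block is a finite condition, freezing the structure of all blocks at once is not, so no finite support can pin down simultaneous cuf decompositions of the $A_n$. The main obstacle is the positive clause. One has to arrange the combinatorics of $\mathcal{G}$ and of the filter so that the ``spreading across the blocks'' phenomenon responsible for $\bigcup_n A_n$ failing to be cuf is \emph{invisible} to countable sets: a countable member of $\mathcal{N}$ must not be able to distribute its elements among infinitely many blocks the way the whole set of atoms does. Concretely, one must prove that every element of a countable member of $\mathcal{N}$ has, relative to any finite support, an orbit of bounded finite cardinality, and that the quotient of such a set by its orbit equivalence relation can be well-ordered in $\mathcal{N}$; then, for a countable family of such sets, the finite orbits over a common support furnish the required countable covering by finite sets. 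Making the block-permuting freedom strong enough to destroy cuf-ness of $\bigcup_n A_n$ but weak enough not to be felt by any countable set is the heart of the argument and the step on which I would spend most effort; once the two properties of $\mathcal{N}$ are secured, the Jech--Sochor transfer delivers a model of $\mathbf{ZF}$ with the same two properties, which completes the proof.
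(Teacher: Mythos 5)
The paper does not prove this theorem at all: it is quoted verbatim from Theorem 3.3 of \cite{hdhkr}, so there is no in-paper argument to compare against. Your overall strategy (a Fraenkel--Mostowski model with atoms split into denumerably many blocks, each block cuf but not well-orderable, followed by a Pincus-style transfer to $\mathbf{ZF}$) is indeed the strategy of the cited source, and your preliminary observations (the implication $\mathbf{UT}(\aleph_0,cuf,cuf)\to\mathbf{UT}(\aleph_0,\aleph_0,cuf)$ is trivial; the blocks witnessing the failure must be cuf but \emph{not} countable, since otherwise they would also refute $\mathbf{UT}(\aleph_0,\aleph_0,cuf)$) are correct. But as a proof the proposal has genuine gaps.

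First, your ``cheapest'' concrete suggestion is self-defeating. If each block $A_n$ is partitioned into two-element slots and the group acts only by swaps \emph{inside} each slot, then every slot is fixed setwise by the entire group, so the doubly indexed family of slots lies in the model with empty support; consequently $\bigcup_n A_n$ is a countable union of finite sets and $\mathbf{UT}(\aleph_0,cuf,cuf)$ is \emph{not} refuted. To make the cuf decomposition of $A_n$ cost a support element inside block $n$ (which is what kills uniformity across blocks), the group must also move the slots of a block among themselves in a way that is destroyed by fixing one atom of that block --- e.g.\ index the slots of $A_n$ by $\mathbb{Z}$ and let the group shift them; you gesture at ``block-permuting freedom'' later, but this contradicts the explicit structure you propose, and no definite group or filter is ever written down. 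Second, you explicitly defer the verification of the positive clause $\mathbf{UT}(\aleph_0,\aleph_0,cuf)$ --- the analysis of orbits of elements of countable sets relative to a common support --- identifying it yourself as ``the heart of the argument'' and ``the step on which I would spend most effort.'' That is precisely the nontrivial content of the theorem, and it is absent. Third, the transfer step is asserted rather than checked: Jech--Sochor handles the boundable negative clause, but transferring the universally quantified positive statement $\mathbf{UT}(\aleph_0,\aleph_0,cuf)$ requires verifying the hypotheses of Pincus's transfer theorems (injective boundability or membership in one of his transferable classes), not merely labelling it ``a $\Pi$-type statement.'' As it stands the proposal is a plausible research plan, not a proof; the reader must consult \cite{hdhkr} for all three missing pieces.
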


\begin{theorem}
\label{t02.6}
(Cf. Theorem 2.1 in \cite{ew}.) $(\mathbf{ZF})$
If $J$ is a cuf set and $\{\langle X_j, d_j\rangle: j\in J\}$ is a family of (quasi)-metric spaces, then the product $\prod\limits_{j\in J}\langle X_j, \tau(d_j)\rangle$ is (quasi)-metrizable.
\end{theorem}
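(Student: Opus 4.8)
The plan is to reduce the cuf product to a countable product of \emph{finite} products, exploiting the fact that a finite product of (quasi)-metric spaces carries a \emph{canonical} (quasi)-metric --- namely the maximum of the truncated coordinate (quasi)-metrics --- whose definition uses no enumeration of the finite index set and hence no form of choice. Since $J$ is a cuf set, I would first fix a sequence $(F_n)_{n\in\omega}$ of finite subsets of $J$ with $J=\bigcup_{n\in\omega}F_n$, and, replacing $F_n$ by $F_n\setminus\bigcup_{m<n}F_m$, assume the $F_n$ are pairwise disjoint. If only finitely many $F_n$ are non-empty, then $J$ is finite and the conclusion is the ($\mathbf{ZF}$-provable) statement that a finite product of (quasi)-metric spaces is (quasi)-metrizable; so I would assume infinitely many of the $F_n$ are non-empty and, writing $\omega'=\{n\in\omega:F_n\neq\emptyset\}$, reindex this family by the unique order-isomorphism of $\omega'$ onto $\mathbb{N}$. (If some $X_j$ is empty the whole product is empty and trivially (quasi)-metrizable, so one may also assume each $X_j\neq\emptyset$, although this is not needed below.)

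For each relevant $n$, I would set $Y_n=\prod_{j\in F_n}X_j$ and define $e_n\colon Y_n\times Y_n\to[0,1]$ by
$$e_n(x,y)=\max_{j\in F_n}\min\{d_j(x(j),y(j)),1\}.$$
A short $\mathbf{ZF}$ computation shows that $e_n$ is a (quasi)-metric bounded by $1$: symmetry is inherited whenever each $d_j$ is a metric; the triangle inequality follows from $\min\{a+b,1\}\le\min\{a,1\}+\min\{b,1\}$ together with the monotonicity of $\max$; and $e_n(x,y)=0$ forces $x(j)=y(j)$ for every $j\in F_n$, hence $x=y$. Moreover $\tau(e_n)$ is exactly the product topology on the finite product $\prod_{j\in F_n}\langle X_j,\tau(d_j)\rangle$, since for $0<r\le 1$ one has $B_{e_n}(x,r)=\prod_{j\in F_n}B_{d_j}(x(j),r)$ and, in a finite product, such ``boxes'' form a neighbourhood base at $x$. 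The crucial point is that $n\mapsto e_n$ is given by an explicit formula in the data $(d_j)_{j\in J}$ and $(F_n)_{n\in\omega}$, so the sequence $(e_n)_n$ exists outright in $\mathbf{ZF}$, with no appeal to choice.

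Finally, I would use the canonical homeomorphism
$$\prod_{j\in J}\langle X_j,\tau(d_j)\rangle\;\cong\;\prod_{n\in\mathbb{N}}\langle Y_n,\tau(e_n)\rangle$$
obtained by regrouping coordinates along the partition $(F_n)_n$ (the map $x\mapsto(x\restriction F_n)_n$), and then apply the $\mathbf{ZF}$ fact recalled in Subsection~\ref{s1.1} around formula~\eqref{0}: the function $d(x,y)=\sum_{n\in\mathbb{N}}2^{-n}\min\{e_n(x(n),y(n)),1\}=\sum_{n\in\mathbb{N}}2^{-n}e_n(x(n),y(n))$ is a (quasi)-metric on $\prod_{n}Y_n$ whose induced topology is the product topology. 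Transporting $d$ back along the homeomorphism gives a (quasi)-metric on $\prod_{j\in J}X_j$ inducing $\prod_{j\in J}\tau(d_j)$, which is the assertion.

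The only genuine subtlety --- and the step I would watch most carefully --- is the choice-freeness of the construction: a careless argument would, for each $n$, \emph{select} some (quasi)-metric on $Y_n$ inducing its product topology, and this is a real use of $\mathbf{CAC}_{fin}$-type choice, since there are many such (quasi)-metrics. The truncated-maximum formula sidesteps this by producing a distinguished one, uniformly and without ordering $F_n$, so the whole argument is a single $\mathbf{ZF}$ definition followed by routine verifications. Everything else --- the finite-product lemma, the coordinate-regrouping homeomorphism, and formula~\eqref{0} --- is standard and already available in the cited literature (\cite{En}, \cite{w}).
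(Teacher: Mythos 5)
Your proof is correct and is essentially the intended one: the paper does not reprove Theorem \ref{t02.6} (it cites Theorem 2.1 of \cite{ew}), but your construction --- regrouping the cuf index set into disjoint finite blocks, equipping each finite subproduct with the canonical, choice-free quasi-metric $\max_{j\in F_n}\min\{d_j(x(j),y(j)),1\}$, and then applying the countable-product formula \eqref{0} --- is exactly the technique the paper itself uses elsewhere, e.g.\ in the proof of Theorem \ref{t03.22}, where $\rho_n(x,y)=\max\{\min\{d_j(x(j),y(j)),1\}:j\in K_n\}$ appears (there with nested finite sets and a weighted supremum in place of the sum). You also correctly isolate the one genuine pitfall, namely that selecting an arbitrary quasi-metric on each finite block would require a fragment of choice, which the explicit truncated-maximum formula avoids.
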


\begin{theorem}
\label{02.7} 
(Cf. Theorem 2.2 in \cite{ew}.) $(\mathbf{ZF})$ Let $\mathbf{X}$ be a (quasi)- metrizable space consisting of at
least two points. Then, for a set $J$, the following conditions are
equivalent:
\begin{enumerate}
\item[(i)] $\mathbf{X}^{J}$ is (quasi)-metrizable;
\item[(ii)] $\mathbf{X}^{J}$ is first-countable;
\item[(iii)] $J$ is a cuf set.
\end{enumerate}
\end{theorem}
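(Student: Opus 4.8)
The plan is to prove the cycle $(iii)\Rightarrow(i)\Rightarrow(ii)\Rightarrow(iii)$. The implication $(iii)\Rightarrow(i)$ will be immediate from Theorem~\ref{t02.6}: since $\mathbf{X}$ is (quasi)-metrizable, fix a (quasi)-metric $d$ on $X$ with $\tau(d)$ equal to the topology of $\mathbf{X}$; then $\mathbf{X}^{J}$ is exactly the cuf product of the constant family $\{\langle X,d\rangle:j\in J\}$, hence (quasi)-metrizable. The implication $(i)\Rightarrow(ii)$ is the routine fact that a (quasi)-metrizable space is first-countable: if $\tau(e)$ is the topology of $\mathbf{X}^{J}$, then at every point $x$ the balls $B_{e}(x,\frac{1}{2^{n}})$ with $n\in\omega$ form a countable neighbourhood base, each being $\tau(e)$-open by the triangle inequality. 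So the substance of the theorem is $(ii)\Rightarrow(iii)$.

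For $(ii)\Rightarrow(iii)$ I would argue as follows, assuming $J\neq\emptyset$ (the case $J=\emptyset$ being trivial). Fix two distinct points $a,b\in X$; since the topology of $\mathbf{X}$ comes from a quasi-metric $d$, at least one of $d(a,b),d(b,a)$ is positive, so after possibly interchanging $a$ and $b$ I may take $U=B_{d}(a,d(a,b))$, a $\tau(d)$-open set with $a\in U$ and $b\notin U$. Let $p\in X^{J}$ be the constant point with value $a$, and for each $j\in J$ let $q_{j}\in X^{J}$ be the point with $q_{j}(j)=b$ and $q_{j}(k)=a$ for $k\in J\setminus\{j\}$. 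By first-countability, fix \emph{one} sequence $(V_{n})_{n\in\omega}$ of open neighbourhoods of $p$ that is a neighbourhood base at $p$, and put $F_{n}=\{j\in J:q_{j}\notin V_{n}\}$. The claim is that $J=\bigcup_{n\in\omega}F_{n}$ with each $F_{n}$ finite, which says precisely that $J$ is a cuf set. Finiteness: choose a basic open set $W$ of the Tychonoff product with $p\in W\subseteq V_{n}$ and let $S=\{j\in J:\pi_{j}(W)\neq X\}$, a finite set; any $j\in J\setminus S$ satisfies $q_{j}\in W\subseteq V_{n}$, because $q_{j}$ agrees with $p$ on all coordinates in $S$, so $F_{n}\subseteq S$. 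Covering: for $k\in J$, the set $\pi_{k}^{-1}(U)$ is an open neighbourhood of $p$ that omits $q_{k}$ (as $q_{k}(k)=b\notin U$), hence contains some $V_{n}$, and then $q_{k}\notin V_{n}$, i.e.\ $k\in F_{n}$.

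The main obstacle I anticipate is purely set-theoretic: the argument for $(ii)\Rightarrow(iii)$ must run in $\mathbf{ZF}$. In $\mathbf{ZFC}$ one would, for each $n$, select a basic open $W_{n}$ with $p\in W_{n}\subseteq V_{n}$, let $S_{n}$ be its finite support, and conclude that $J=\bigcup_{n}S_{n}$ is \emph{countable}; but choosing all the $W_{n}$ simultaneously needs a fragment of countable choice. The device above sidesteps this: the sets $F_{n}$ are defined outright from the single fixed base $(V_{n})_{n\in\omega}$ and the two fixed points $a,b$, with no further choice, and a basic open $W\subseteq V_{n}$ is invoked only to certify that an \emph{already-defined} $F_{n}$ is finite, not to define it. This is exactly the point at which the $\mathbf{ZFC}$ conclusion ``$J$ is countable'' degrades to ``$J$ is a cuf set'' in $\mathbf{ZF}$; everything else is routine bookkeeping with the product topology.
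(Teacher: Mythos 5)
Your proof is correct. The paper does not prove this theorem itself (it is quoted as a known result, citing Theorem 2.2 of \cite{ew}), but your argument for (ii)$\Rightarrow$(iii) --- defining the sets $F_{n}$ outright from one fixed neighbourhood base at the constant point $p$ and invoking a basic open set only to certify finiteness of an already-defined $F_{n}$ --- is precisely the choice-free device the authors use in their own proofs of Proposition \ref{p03.6} and Theorem \ref{t03.18}, so it is essentially the intended approach; the only cosmetic remark is that with the paper's definition of a quasi-metric one has $d(a,b)>0$ for \emph{any} distinct $a,b$, so no interchange of $a$ and $b$ is needed.
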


\begin{theorem}
\label{t02.8} (Cf. Theorem 2.7 in \cite{ew}.) $(\mathbf{ZF})$ A cofinite space $\langle X, \tau\rangle$ is quasi-metrizable iff it is first-countable iff $X$ is a cuf set.
\end{theorem}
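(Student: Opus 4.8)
The plan is to establish the cycle of implications
$$\text{(quasi\text{-}metrizable)}\ \Longrightarrow\ \text{(first\text{-}countable)}\ \Longrightarrow\ (X\ \text{is a cuf set})\ \Longrightarrow\ \text{(quasi\text{-}metrizable)},$$
checking at each step that the argument is carried out entirely inside $\mathbf{ZF}$. The case $X=\emptyset$ is trivial (all three properties hold vacuously), so I may assume $X\neq\emptyset$; no further split on $|X|$ will be needed. For the first implication I would not even use that $\tau$ is the cofinite topology: if $d$ is a quasi-metric with $\tau(d)=\tau$, then each ball $B_d(x,\frac1{2^n})$ is $\tau(d)$-open, since for $y\in B_d(x,\frac1{2^n})$ one picks $m\in\omega$ with $\frac1{2^m}\le\frac1{2^n}-d(x,y)$ and the triangle inequality gives $B_d(y,\frac1{2^m})\subseteq B_d(x,\frac1{2^n})$; as every $\tau(d)$-open set containing $x$ contains some $B_d(x,\frac1{2^n})$, the countable family $\{B_d(x,\frac1{2^n}):n\in\omega\}$ is an open neighborhood base at $x$, so $\mathbf{X}$ is first-countable.

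For the second implication I would fix any $x_0\in X$ together with a countable neighborhood base $\mathcal{B}$ at $x_0$. In the cofinite topology a set $N$ is a neighborhood of $x_0$ exactly when $x_0\in N$ and $X\setminus N\in[X]^{<\omega}$; in particular each $B\in\mathcal{B}$ satisfies $x_0\in B$ and $X\setminus B\in[X]^{<\omega}$. Moreover, for every $y\in X\setminus\{x_0\}$ the cofinite set $X\setminus\{y\}$ is a neighborhood of $x_0$, so some $B\in\mathcal{B}$ is contained in it, i.e.\ $y\in X\setminus B$. Hence $X\setminus\{x_0\}=\bigcup\{X\setminus B:B\in\mathcal{B}\}$ is the union of a countable family of finite sets, and therefore $X=(X\setminus\{x_0\})\cup\{x_0\}$ is a cuf set. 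The assignment $B\mapsto X\setminus B$ is definable, so no choice is involved.

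For the third implication I would use a surjection $g\colon\omega\to\mathcal{F}$ onto a countable family $\mathcal{F}$ of finite sets with $\bigcup\mathcal{F}=X$, set $F_n=\bigcup_{k\le n}g(k)$ so that $(F_n)_{n\in\omega}$ is an increasing sequence of finite sets with union $X$, and define $\rho\colon X\to\omega$ by $\rho(x)=\min\{n\in\omega:x\in F_n\}$. Then I would put $d(x,x)=0$ and $d(x,y)=\frac1{2^{\rho(y)}}$ for $x\neq y$. A short check shows $d$ is a (non-Archimedean) quasi-metric: the nontrivial instances of the triangle inequality reduce to $\frac1{2^{\rho(z)}}\le\frac1{2^{\rho(y)}}+\frac1{2^{\rho(z)}}$, and $d(x,y)>0$ whenever $x\neq y$. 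The key point is that $\{y\in X:d(x,y)\ge\frac1{2^n}\}\subseteq\{x\}\cup F_n$ is finite for every $x$ and $n$, and from this one gets $\tau(d)=$ the cofinite topology: a nonempty $\tau(d)$-open set $V$ has finite complement because $X\setminus V\subseteq X\setminus B_d(x,\frac1{2^n})$ for suitable $x\in V$ and $n$, and conversely every cofinite $V$ containing $x$ contains $B_d(x,\frac1{2^n})$ once $\frac1{2^n}\le\min\{d(x,y):y\in X\setminus V\}$, a minimum over a finite nonempty set (or $V=X$ is trivially open). This closes the cycle.

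The only step that really deserves attention is the last one: verifying that the quasi-metric $d$ induces \emph{exactly} the cofinite topology and not some finer one, which comes down to the observation that a quasi-metric all of whose "upper balls" $\{y:d(x,y)\ge\varepsilon\}$ are finite must generate the cofinite topology. Everything else is routine, and the whole argument stays in $\mathbf{ZF}$ because at no point is a choice from an infinite family required: the cuf presentation of $X$ is given, the passage to complements is definable, and the construction of $d$ uses only that fixed presentation.
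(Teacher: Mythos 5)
Your proposal is correct and follows essentially the route the paper relies on: the cycle quasi-metrizable $\Rightarrow$ first-countable $\Rightarrow$ cuf $\Rightarrow$ quasi-metrizable, with the last step carried out by exactly the quasi-metric of Lemma \ref{l03.7} (your $d(x,y)=2^{-\rho(y)}$ is just a reparametrization of the paper's $\rho(x,y)=\frac{1}{n(y)}$ built from an increasing exhaustion by finite sets) and the middle step by the same complements-of-basic-neighborhoods argument used in Proposition \ref{p03.6} and Theorem \ref{t03.18}. All choices made are single instantiations, so the argument is indeed a $\mathbf{ZF}$ proof.
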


\begin{theorem}
\label{t02.12} (Cf. Corollary 4.8 in \cite{gt}.)  $(\mathbf{ZF})$ (Urysohn's Metrization Theorem.) If $%
\mathbf{X}$ is a second-countable $T_3$-space, then $\mathbf{X}$ is
metrizable.
\end{theorem}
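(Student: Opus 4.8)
\subsection*{Proof proposal for Theorem \ref{t02.12}}

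The plan is to realize $\mathbf{X}$ as a subspace of a Hilbert cube by the classical Urysohn embedding, arranging that \emph{every} auxiliary selection is made canonically, so that no fragment of choice is invoked. Fix, once and for all, a countable base $\mathcal{B}=\{B_n:n\in\omega\}$ of $\mathbf{X}$ together with an enumeration witnessing its countability; transporting the standard well-ordering of $\omega$ yields a fixed well-ordering both of $\mathcal{B}$ and of the (still countable) family $\mathcal{F}=\{\bigcup\mathcal{G}:\mathcal{G}\in[\mathcal{B}]^{<\omega}\}$ of finite unions of basic sets. From now on every ``choice'' will have the form ``take the $\mathcal{F}$-least (or $\mathcal{B}$-least) set with the stated property'', which is unproblematic in $\mathbf{ZF}$.

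\emph{Step 1 (a canonical normality operator).} Using that $\mathbf{X}$ is regular and $T_1$ and that $\mathcal{B}$ is a base, I would build an operator assigning to each ordered pair $(C,D)$ of disjoint closed subsets of $X$ a pair of disjoint open sets $G(C,D)\supseteq C$ and $H(C,D)\supseteq D$: let $(U_k)$ list, in the fixed order of $\mathcal{B}$, those $U\in\mathcal{B}$ with $\mathrm{cl}_{\mathbf{X}}(U)\cap D=\emptyset$, and $(V_k)$ those with $\mathrm{cl}_{\mathbf{X}}(V)\cap C=\emptyset$; regularity makes $\{U_k\}$ cover $C$ and $\{V_k\}$ cover $D$, and then $G(C,D)=\bigcup_k\bigl(U_k\setminus\bigcup_{i\le k}\mathrm{cl}_{\mathbf{X}}(V_i)\bigr)$ and $H(C,D)=\bigcup_k\bigl(V_k\setminus\bigcup_{i\le k}\mathrm{cl}_{\mathbf{X}}(U_i)\bigr)$ are as required. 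In particular, for any closed $C$ and open $W\supseteq C$, the open set $N(C,W):=G(C,X\setminus W)$ satisfies $C\subseteq N(C,W)\subseteq\mathrm{cl}_{\mathbf{X}}(N(C,W))\subseteq W$ and depends only on $(C,W)$.

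\emph{Step 2 (canonical Urysohn functions and the embedding).} Put $P=\{(n,m)\in\omega\times\omega:\mathrm{cl}_{\mathbf{X}}(B_n)\subseteq B_m\}$, a subset of $\omega\times\omega$, hence a canonically well-ordered countable set. For each $(n,m)\in P$ I would run the usual Urysohn recursion producing open sets $U_r$ indexed by the dyadic rationals $r\in[0,1]$, with $U_1=B_m$, $U_0=N(\mathrm{cl}_{\mathbf{X}}(B_n),B_m)$, and each new level inserted by means of the operator $N$ of Step~1; since every level is determined outright by the previous ones, this is an ordinary recursion on $\omega$ and needs no choice. It yields a continuous $f_{n,m}\colon\mathbf{X}\to[0,1]$ with $f_{n,m}\equiv 0$ on $\mathrm{cl}_{\mathbf{X}}(B_n)$ and $f_{n,m}\equiv 1$ on $X\setminus B_m$, depending only on $(n,m)$. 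Hence $e\colon X\to[0,1]^P$, $e(x)=(f_{n,m}(x))_{(n,m)\in P}$, is a genuine (choice-free) function; it is continuous coordinatewise, and for every $x\in X$ and every open $V\ni x$ two applications of regularity inside $\mathcal{B}$ produce $(n,m)\in P$ with $x\in B_n$ and $B_m\subseteq V$, whence $f_{n,m}(x)=0$ while $f_{n,m}\equiv 1$ on $X\setminus V$: taking $V=X\setminus\{y\}$ for $y\neq x$ gives injectivity, and in general the set of $z\in[0,1]^P$ whose $(n,m)$-th coordinate is less than $1$ meets $e(X)$ in a neighbourhood of $e(x)$ contained in $e(V)$, so $e$ is open onto $e(X)$. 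Since $P$ is countable, $[0,1]^P$ is metrizable in $\mathbf{ZF}$ by the standard product-metric formula recalled after~\eqref{0}; therefore $\mathbf{X}\cong e(X)$, a subspace of a metrizable space, is metrizable.

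\emph{Main obstacle.} The only genuine difficulty, and the reason the statement is not immediate in $\mathbf{ZF}$, lies in Steps~1 and~2: the textbook proofs of normality and of Urysohn's lemma pick separating open sets (and dyadic-level open sets) arbitrarily, i.e.\ select a member from each set of a (possibly uncountable) family of non-empty sets, which in $\mathbf{ZF}$ generally requires $\mathbf{AC}$, and even over the countably many pairs in $P$ requires $\mathbf{CAC}$. Second countability is precisely what defeats this: it furnishes a fixed well-ordered reservoir of open sets — the finite unions of basic sets, or the basic sets themselves — from which a least witness can always be named, turning both constructions into honest definable operators; the dyadic construction is then a plain recursion on $\omega$. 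Everything else (coordinatewise continuity of $e$, the embedding criterion, metrizability of a countable power of $[0,1]$, and heredity of metrizability to subspaces) is already available in $\mathbf{ZF}$.
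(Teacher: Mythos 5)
Your argument is correct, and it is essentially the standard choice-free proof of Urysohn's metrization theorem: the paper itself gives no proof here but cites Corollary 4.8 of Good and Tree, where exactly this strategy is used --- a fixed enumeration of a countable base turns the normality and Urysohn-lemma selections into definable operators, after which one embeds $\mathbf{X}$ into the metrizable cube $[0,1]^{P}$ for a countable index set $P$. Your identification of the canonical separating operator and the recursion on dyadic levels as the only points where choice could sneak in is exactly right, so nothing further is needed.
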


\begin{theorem}
\label{t02.13}
 (Cf. \cite{naga}, \cite{sm}, \cite{bi}, \cite{cr}. ) 
 \begin{enumerate}
 \item[(i)]  $(\mathbf{ZFC})$ Every metrizable space has a $\sigma$-locally finite base.
 \item[(ii)] $(\mathbf{ZF})$ If a $T_1$-space $\mathbf{X}$ is regular and has a $\sigma$-locally finite base, then $\mathbf{X}$ is metrizable.
 \end{enumerate}
\end{theorem}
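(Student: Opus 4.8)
The plan is to treat the two parts quite differently: (i) is the ``easy'' half of the Nagata--Smirnov--Bing metrization theorem but genuinely needs $\mathbf{AC}$, whereas (ii) is the substantive half and can be carried out in $\mathbf{ZF}$ provided every construction is made canonical. For (i), I would fix a metric $d$ with $\tau(d)$ equal to the topology of $\mathbf{X}$, invoke $\mathbf{MP}$ (which holds in $\mathbf{ZFC}$ by Stone's theorem) to get that $\mathbf{X}$ is paracompact, and then, for each $n\in\mathbb{N}$, take a locally finite open refinement $\mathcal{B}_n$ of the open cover $\{B_d(x,\frac1n):x\in X\}$. Then $\mathcal{B}=\bigcup_{n\in\mathbb{N}}\mathcal{B}_n$ is $\sigma$-locally finite, and it is a base: given an open $U$ and $x\in U$, pick $\varepsilon>0$ with $B_d(x,\varepsilon)\subseteq U$ and $n$ with $\frac2n<\varepsilon$; a member $V\in\mathcal{B}_n$ with $x\in V$ is contained in a $d$-ball of radius $\frac1n$, hence $V\subseteq B_d(x,\frac2n)\subseteq U$ by the triangle inequality. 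The appeal to $\mathbf{MP}$ is exactly what forces the hypothesis $\mathbf{ZFC}$ here.

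For (ii) I would follow the Nagata--Smirnov route, but execute it so that no choices are made. Fix once and for all a sequence $(\mathcal{B}_n)_{n\in\mathbb{N}}$ of locally finite open families with $\mathcal{B}=\bigcup_n\mathcal{B}_n$ a base; every object below is to be defined by an explicit formula in $(\mathcal{B}_n)_n$. First, using regularity and local finiteness, for $B\in\mathcal{B}$ set $E_n(B)=\text{cl}_{\mathbf{X}}\big(\bigcup\{B'\in\mathcal{B}_n:\text{cl}_{\mathbf{X}}(B')\subseteq B\}\big)$; since a locally finite family has the property that the closure of the union of a subfamily equals the union of the closures, one gets $E_n(B)\subseteq B$, while regularity gives $\bigcup_nE_n(B)=B$, so every open set is an $F_\sigma$. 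Next, $\mathbf{X}$ is normal: for disjoint closed $C,D$ put $U_n=\bigcup\{B\in\mathcal{B}_n:\text{cl}_{\mathbf{X}}(B)\cap D=\emptyset\}$ and $V_n=\bigcup\{B\in\mathcal{B}_n:\text{cl}_{\mathbf{X}}(B)\cap C=\emptyset\}$, note $C\subseteq\bigcup_nU_n$, $D\subseteq\bigcup_nV_n$ and $\text{cl}_{\mathbf{X}}(U_n)\cap D=\emptyset=\text{cl}_{\mathbf{X}}(V_n)\cap C$, and disjointify: $G=\bigcup_n\big(U_n\setminus\bigcup_{k\le n}\text{cl}_{\mathbf{X}}(V_k)\big)$ and $H=\bigcup_n\big(V_n\setminus\bigcup_{k\le n}\text{cl}_{\mathbf{X}}(U_k)\big)$ are disjoint open sets with $C\subseteq G$, $D\subseteq H$. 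In particular this yields a \emph{definable} operation $\Phi$ sending a pair $(C,D)$ of disjoint closed sets to an open $G$ with $C\subseteq G\subseteq\text{cl}_{\mathbf{X}}(G)\subseteq X\setminus D$.

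Now for each $B\in\mathcal{B}$ I would build a continuous $f_B\colon X\to[0,1]$ with $f_B^{-1}((0,1])=B$: because $\Phi$ is definable, for each $n$ one can run Urysohn's lemma on the disjoint closed pair $(E_n(B),X\setminus B)$ by a choice-free recursion along a fixed enumeration of the dyadic rationals (using $\Phi$ at each dyadic level), obtaining a canonical $g_{n,B}$ equal to $1$ on $E_n(B)$ and to $0$ on $X\setminus B$, and then set $f_B=\sum_n2^{-n}g_{n,B}$; the key point is that $B\mapsto f_B$ is given by a uniform formula, so no function is selected per element of the (possibly non-well-orderable) base $\mathcal{B}$. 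Finally, define $\rho_n(x,y)=\sup\{|f_B(x)-f_B(y)|:B\in\mathcal{B}_n\}$ and $d(x,y)=\sum_n2^{-n}\rho_n(x,y)$. Local finiteness of $\mathcal{B}_n$ makes each $\rho_n$ a continuous pseudometric bounded by $1$ (near any point all but finitely many of the $f_B$ with $B\in\mathcal{B}_n$ vanish), so $d$ is a well-defined pseudometric; it is a metric because, by $T_1$, for $x\ne y$ there is $B\in\mathcal{B}$ with $x\in B\not\ni y$, whence $\rho_n(x,y)\ge f_B(x)>0$ for the appropriate $n$. And $\tau(d)$ coincides with the topology $\tau$ of $\mathbf{X}$: each $f_B$, hence each $\rho_n$, hence $d$, is $\tau$-continuous, so $\tau(d)\subseteq\tau$; conversely, given $x\in U\in\tau$, pick $B\in\mathcal{B}_n$ with $x\in B\subseteq U$, and then $d(x,y)<2^{-n}f_B(x)$ forces $f_B(y)>0$, i.e. $y\in B\subseteq U$, so $B_d(x,2^{-n}f_B(x))\subseteq U$.

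The hard part is not any single computation but the \emph{canonicity} constraint running through (ii): since $\mathcal{B}$ need not be well-orderable and $\mathbf{CAC}$ is unavailable, one must never pick a Urysohn function, or even a single sandwiching open set, non-canonically. The plan deals with this by proving normality through the explicit disjointification formula above (rather than by an abstract normality argument), which turns Urysohn's lemma into a genuinely choice-free recursion, and by defining $B\mapsto f_B$ and $n\mapsto\rho_n$ by uniform formulas, so that the resulting metric $d$ is a single definable object built from the fixed sequence $(\mathcal{B}_n)_n$. Everything else is the standard Nagata--Smirnov bookkeeping, which I would verify carefully only once these canonical versions of the ingredients are in place.
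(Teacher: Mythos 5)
Theorem \ref{t02.13} is stated in the paper's Subsection \ref{s2.2} as a quoted known result, with no proof supplied: part (i) is attributed to Nagata, Smirnov and Bing, and the $\mathbf{ZF}$-provability of part (ii) to Collins and Roscoe (see also Remark \ref{r02.14}). So there is no in-paper argument to compare against; what you have written is a self-contained reconstruction, and it is correct. Your part (i) is the standard derivation from paracompactness (Stone's theorem), and you correctly identify that as the place where $\mathbf{AC}$ enters. For part (ii) you follow the classical Nagata--Smirnov route, and you put your finger on exactly the right issue: the textbook proof makes infinitely many arbitrary selections (of separating open sets and of Urysohn functions indexed by the possibly non-well-orderable base), and the fix is to make every auxiliary object definable from the fixed sequence $(\mathcal{B}_n)_{n}$ --- the explicit disjointification giving a canonical normality operator $\Phi$, the Urysohn recursion along a fixed enumeration of the dyadics using $\Phi$ at each step, and the uniform formulas for $B\mapsto f_B$ and $\rho_n$. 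The supporting facts you invoke (closure of a union of a locally finite family equals the union of the closures; local finiteness of $\mathcal{B}_n$ making each $\rho_n$ a well-defined continuous pseudometric; $T_1$ upgrading the pseudometric to a metric) are all choice-free, and the final verification that $\tau(d)$ equals the given topology goes through as you indicate. The only cosmetic slip is the parenthetical claim that ``every open set is an $F_\sigma$'': your $E_n$-argument as written establishes this for basic open sets (it extends verbatim to arbitrary open sets), but nothing later depends on it since you prove normality directly. In short, your proof is a faithful and correctly executed version of the argument the paper delegates to the cited literature.
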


\begin{theorem}
\label{t02.14}
(Cf. Theorem 7.1 and Corollary 7.1 in \cite{fl} and p. 489 in \cite{KV}.) $(\mathbf{ZF})$ For every $T_1$-space $\mathbf{X}$, the following conditions are equivalent:
\begin{enumerate}
\item[(i)] $\mathbf{X}$ is non-Archimedeanly quasi-metrizable;
\item[(ii)] $\mathbf{X}$ has a $\sigma$-point-finite base;
\item[(iii)] $\mathbf{X}$ has a $\sigma$-interior-preserving base.
\end{enumerate}
\end{theorem}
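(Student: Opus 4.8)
The plan is to establish the equivalences through the implications (ii)$\Rightarrow$(iii), (i)$\Leftrightarrow$(iii), and (iii)$\Rightarrow$(ii). Of these, (ii)$\Rightarrow$(iii) is immediate: in a point-finite family $\mathcal U$ the set $\bigcap\{U\in\mathcal U:x\in U\}$ is a \emph{finite} intersection of open sets, hence open, so a point-finite family is interior-preserving and a $\sigma$-point-finite base is $\sigma$-interior-preserving. The substantive work is therefore (a) passing between non-Archimedean quasi-metrics and $\sigma$-interior-preserving bases, and (b) upgrading a $\sigma$-interior-preserving base to a $\sigma$-point-finite one.

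For (i)$\Rightarrow$(iii), suppose $d$ is a non-Archimedean quasi-metric on $X$ with $\tau(d)=\tau$, and for $n\in\omega$ put $\mathcal B_n=\{B_d(x,\tfrac1{2^n}):x\in X\}$. The non-Archimedean inequality gives at once that $y\in B_d(x,r)$ implies $B_d(y,r)\subseteq B_d(x,r)$; hence each $d$-ball is $\tau(d)$-open, and for every $z\in X$ we get $\bigcap\{B\in\mathcal B_n:z\in B\}=B_d(z,\tfrac1{2^n})$, which is open, so each $\mathcal B_n$ is interior-preserving. Since $\bigcup_{n\in\omega}\mathcal B_n$ is a base of $\tau(d)=\tau$, condition (iii) holds (the $T_1$ hypothesis is not needed here).

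For (iii)$\Rightarrow$(i), let $\bigcup_{n\in\omega}\mathcal B_n$ be a $\sigma$-interior-preserving base of $\tau$; replacing $\mathcal B_n$ by $\{X\}\cup\bigcup_{k\le n}\mathcal B_k$ we may assume $X\in\mathcal B_0$ and $\mathcal B_n\subseteq\mathcal B_{n+1}$. For $x\in X$ set $V_n(x)=\bigcap\{B\in\mathcal B_n:x\in B\}$; this is open, $x\in V_{n+1}(x)\subseteq V_n(x)$, and $V_0(x)=X$. Define
$$d(x,y)=\inf\{2^{-n}:n\in\omega,\ y\in V_n(x)\}.$$
Then $d$ takes values in $\{0\}\cup\{2^{-n}:n\in\omega\}$ and $d(x,x)=0$. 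The crucial point is that $y\in V_n(x)$ forces $V_n(y)\subseteq V_n(x)$, because then $y$ belongs to every member of $\mathcal B_n$ that contains $x$; this yields the non-Archimedean triangle inequality $d(x,z)\le\max\{d(x,y),d(y,z)\}$. Since $X$ is $T_1$ and $\bigcup_n\mathcal B_n$ is a base, for $x\ne y$ some $B\in\mathcal B_n$ separates $x$ from $y$, whence $y\notin V_n(x)$ and $d(x,y)>0$, so $d$ is a quasi-metric. Finally $B_d(x,2^{-n})=V_{n+1}(x)$ for all $n$, which gives $\tau(d)=\tau$; thus (i) holds.

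The hard part will be (iii)$\Rightarrow$(ii): from a $\sigma$-interior-preserving base (equivalently, from the quasi-metric $d$ just constructed) one must produce a $\sigma$-point-finite base. In the symmetric situation this is trivial, since for a non-Archimedean \emph{metric} the balls of a fixed radius $2^{-n}$ are pairwise disjoint or equal, so $\mathcal B_n$ is already point-finite; but when $d$ is genuinely asymmetric the family $\{B_d(x,\tfrac1{2^n}):x\in X\}$ need not be point-finite and a genuine refinement is required. The strategy is to pass to the canonical refinement $\{V_n(x):x\in X\}$ — which satisfies $z\in V_n(x)$ if and only if $V_n(z)\subseteq V_n(x)$, so the inclusion relation on the distinct sets $V_n(x)$ becomes the relevant partial order — and then to thin this family out, using the $T_1$ axiom to bound, for each $z$, the set of $x$ with $z\in V_n(x)$ (it is $T_1$ that prevents a point from having infinitely many predecessors in that order). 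Because the paper is set in $\mathbf{ZF}$, this refinement must be performed canonically, with no appeal to a well-ordering of $X$; I expect this to be the step demanding the most care, and it is in essence the content of Theorem~7.1 and Corollary~7.1 of \cite{fl}, which may be invoked directly.
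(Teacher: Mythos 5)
Your write-up of (ii)$\Rightarrow$(iii) and of the equivalence (i)$\Leftrightarrow$(iii) is correct and is the standard, genuinely choice-free argument (one cosmetic slip: with your normalization the new $\mathcal{B}_0$ is $\{X\}$ together with the old $\mathcal{B}_0$, so $V_0(x)$ need not equal $X$ and $d(x,y)$ could be $\inf\emptyset$; take $\mathcal{B}_0=\{X\}$ outright and shift indices, or set $d(x,y)=1$ when the defining set is empty). For comparison, note that the paper does not prove Theorem \ref{t02.14} at all: it is listed in Subsection \ref{s2.2} among known theorems and cited to \cite{fl} and \cite{KV}, so there is no proof in the paper to measure you against; the assessment below is of your argument on its own terms.

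The genuine gap is the implication (iii)$\Rightarrow$(ii) (equivalently (i)$\Rightarrow$(ii)), which you acknowledge you do not prove, and the strategy you sketch for it would fail. You propose to pass to the canonical family $\{V_n(x):x\in X\}$ and ``thin it out,'' asserting that the $T_1$ axiom prevents a point from having infinitely many predecessors in the inclusion order. That assertion is false. Let $X=\omega\cup\{\infty\}$ and define $d(x,x)=0$, $d(n,m)=\frac{1}{4}$ for $n,m\in\omega$ with $n<m$, $d(n,\infty)=\frac{1}{4}$ for $n\in\omega$, and $d(x,y)=1$ in all remaining cases; one checks directly that $d$ is a non-Archimedean quasi-metric and that $\tau(d)$ is discrete (so $X$ is $T_1$), yet the interior-preserving family $\{B_d(x,\frac{1}{2}):x\in X\}$ contains the pairwise distinct sets $B_d(n,\frac{1}{2})=\{m\in\omega:m\geq n\}\cup\{\infty\}$, every one of which contains $\infty$. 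Hence the canonical refinement is not point-finite even after identifying equal members, and no discarding of redundant sets repairs this: producing a $\sigma$-point-finite base from a non-Archimedean quasi-metric requires a genuinely different selection of balls (indexed by pairs of radii, as in \cite{fl}), and in the present context one must additionally verify that that construction uses no choice, which is precisely what is at stake in asserting the theorem in $\mathbf{ZF}$. Deferring this one implication to ``Theorem 7.1 and Corollary 7.1 of \cite{fl}'' leaves your proof no more informative than the paper's bare citation on exactly the implication that carries the content.
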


\begin{remark}
\label{r02.14}
It is known from \cite{gt} that Urysohn's Metrization Theorem is provable in $%
\mathbf{ZF}$. That it holds in $\mathbf{ZFC}$ that a $T_1$-space is metrizable if and only if it is regular and has a $\sigma$-locally finite base was originally proved  by Nagata in \cite{naga}, Smirnov in \cite{sm} and Bing in \cite{bi}. It was shown in \cite{cr} that it is provable in $\mathbf{ZF}$ that every regular $T_1$-space which admits a $\sigma$-locally finite base is metrizable. On the other hand, it was established in \cite{hkrs} that $\mathbf{M}(\sigma-l.f.)$ (see Definition \ref{d1.12}(12)) is an equivalent to $\mathbf{M}(\sigma-p.f.)$ (see Definition \ref{d1.12}(11)) and implies $\mathbf{MP}$ (see Definition \ref{d1.12}(10)).
However, in \cite{gtw}, a model of $\mathbf{ZF}+\mathbf{DC}$ was shown in which $\mathbf{MP}$ fails. In \cite{sc}, a model of $\mathbf{ZF+BPI}$ was shown in which $\mathbf{MP}$ fails. All this taken together with Theorem \ref{t02.14} implies that, in each of the above-mentioned $\mathbf{ZF}$- models constructed in \cite{gtw} and \cite{sc},  there exists a metrizable space which fails to be non-Archimedeanly quasi-metrizable. On the other hand, it is a well-known fact of $\mathbf{ZFC}$ that every metrizable space is non-Archimedeanly quasi-metrizable (notice that this follows directly from Theorems \ref{t02.13}(i) and \ref{t02.14}); however, a metrizable space need not be non-Archimedeanly metrizable (see, e.g., p. 490 in \cite{KV}).
\end{remark}

\subsection{(Quasi)-metrics on countable unions}
\label{s2.3}

The following useful schema is a minor modification of an idea that has already appeared in some articles (cf., e.g. \cite{kerBull}) but we do not cite all of them here. \smallskip

Suppose that  $\mathcal{A}=\{A_n: n\in\mathbb{N}\}$ is a disjoint family of non-empty sets, $A=\bigcup\limits_{n\in\mathbb{N}}A_n$ and $\infty\notin A$. Let $X=A\cup\{\infty\}$.  Suppose we a given a sequence $(\rho_n)_{n\in\mathbb{N}}$ such that, for each $n\in\mathbb{N}$, $\rho_n$ is a (quasi)-metric on $A_n$. Let $d_n(x,y)=\min\{\rho_n(x,y), \frac{1}{n}\}$ for all $x,y\in A_n$. We define a function $d:X\times X\to\mathbb{R}$ as follows:
\[
(\ast\ast)\text{  }d(x,y)=\left\{ 
\begin{array}{c}
0 \text{ if } x=y, \\ 
\max\{\frac{1}{n}, \frac{1}{m}\}\text{ if } x\in A_n ,y\in A_m \text{ and } n\neq m,\\
d_n(x,y) \text{ if } x,y\in A_n,\\
\frac{1}{n}\text{ if } x\in A \text{ and } y=\infty\text{ or } x=\infty\text{ and } y\in A.
\end{array}
\right. 
\]

\begin{proposition}
\label{p02.16}
 The function $d$, defined by ($\ast\ast$), has the following properties:
\begin{enumerate}
\item[(i)] $d$ is a quasi-metric on $X$;
\item[(ii)] if, for every $n\in\mathbb{N}$, $\rho_n$ is a metric of $A_n$, then $d$ is a metric on $X$;
\item[(iii)] if, for every $n\in\mathbb{N}$, the space $\langle A_n, \tau(\rho_n)\rangle$ is compact, then so is the space $\langle X, \tau(d)\rangle$. 
\end{enumerate}
\end{proposition}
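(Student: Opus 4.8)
The plan is to verify the three claims separately: (i) reduces to a finite case analysis, (ii) is immediate from (i), and (iii) needs only a little care to stay within $\mathbf{ZF}$. For part~(i), non-negativity and $d(x,x)=0$ are read directly off $(\ast\ast)$, and $d(x,y)=0\Rightarrow x=y$ follows because for distinct $x,y$ in a common block $A_n$ one has $d(x,y)=\min\{\rho_n(x,y),\frac1n\}>0$ (since $\rho_n$ is a quasi-metric), while in all other cases $d(x,y)$ equals one of the strictly positive numbers $\max\{\frac1n,\frac1m\}$ or $\frac1n$. The substance of part~(i) is the triangle inequality $d(x,z)\le d(x,y)+d(y,z)$, for which I would first record two facts: (a) for each $n\in\mathbb{N}$ the function $d_n=\min\{\rho_n,\frac1n\}$ is a quasi-metric on $A_n$ (the minimum of a quasi-metric and a positive constant is again a quasi-metric) with $\delta_{d_n}(A_n)\le\frac1n$; and (b) writing $\ell(x)=\frac1n$ for $x\in A_n$ and $\ell(\infty)=0$, one has $d(x,z)=\max\{\ell(x),\ell(z)\}$ whenever $x$ and $z$ do not lie in a common block, so that in particular $d(x,z)\ge\frac1n$ whenever $x\in A_n$ and $z\notin A_n$.

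The case analysis for the triangle inequality then splits on whether $x$ and $z$ lie in a common block $A_n$. If they do: when $y\in A_n$ as well, the inequality is that of the quasi-metric $d_n$ from fact~(a); when $y\notin A_n$, fact~(b) gives $d(x,y)\ge\frac1n\ge\delta_{d_n}(A_n)\ge d_n(x,z)=d(x,z)$. If $x$ and $z$ lie in different blocks, say $x\in A_n$ with $\frac1n=\max\{\ell(x),\ell(z)\}=d(x,z)$, one checks that, whichever block (if any) contains $y$, at least one of $d(x,y),d(y,z)$ is $\ge\frac1n$: if $y\in A_n$ then $z\notin A_n$ and $d(y,z)=\max\{\ell(y),\ell(z)\}=\frac1n$, while if $y\notin A_n$ then $d(x,y)=\max\{\ell(x),\ell(y)\}\ge\frac1n$. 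This exhausts all configurations, so $d$ is a quasi-metric, which is part~(i). For part~(ii), if every $\rho_n$ is a metric then every $d_n$ is symmetric and every clause of $(\ast\ast)$ is visibly symmetric in $x$ and $y$; hence $d=d^{-1}$, and by part~(i) $d$ is a metric.

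For part~(iii), the point is that each block $A_n$ is an open subspace of $\langle X,\tau(d)\rangle$ carrying exactly the topology $\tau(\rho_n)$, and that every $\tau(d)$-neighbourhood of $\infty$ contains all but finitely many blocks entirely. Both are seen by computing balls: for $x\in A_n$ and $0<r\le\frac1n$ one has $B_d(x,r)=B_{\rho_n}(x,r)\subseteq A_n$, since points outside $A_n$ are at $d$-distance $\ge\frac1n$ from $x$ and inside $A_n$ the function $d$ agrees with $\rho_n$ below $\frac1n$; and $B_d(\infty,r)=\{\infty\}\cup\bigcup\{A_n:\frac1n<r\}$. Now let $\mathcal{U}$ be a $\tau(d)$-open cover of $X$ and pick some $U_0\in\mathcal{U}$ with $\infty\in U_0$; then $U_0$ contains $B_d(\infty,r)$ for some $r>0$, so $X\setminus U_0\subseteq K:=A_1\cup\cdots\cup A_N$ for a suitable $N\in\mathbb{N}$. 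The subspace $K$ of $\langle X,\tau(d)\rangle$ is a finite union of subspaces, each homeomorphic to a compact space, and a finite union of compact subspaces of a space is compact in $\mathbf{ZF}$; hence finitely many members $U_1,\dots,U_m$ of $\mathcal{U}$ cover $K$, and $\{U_0,U_1,\dots,U_m\}$ is a finite subcover of $\mathcal{U}$. This proves part~(iii).

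The only laborious step is the triangle inequality in part~(i), but it is conceptually routine once one observes that the ``between-blocks'' part of $d$ is governed by the non-Archimedean metric $(x,z)\mapsto\max\{\ell(x),\ell(z)\}$ on $X$, so that a detour through a third point that leaves the block of $x$ or enters the block of $z$ already costs at least $\frac1n$, while each block has $d$-diameter at most $\frac1n$. The only point requiring care in part~(iii) is to keep the argument choiceless, which it is: it uses a single arbitrarily chosen $U_0$ together with the $\mathbf{ZF}$-provable compactness of the finite union $A_1\cup\cdots\cup A_N$.
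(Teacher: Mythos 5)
Your proof is correct, and in fact the paper offers no proof of Proposition \ref{p02.16} at all: it is presented as a ``useful schema'' adapted from earlier articles and left as a routine verification, so there is no argument of the authors' to compare yours against. Your write-up supplies exactly what is missing. The two organizing observations --- that $d_n=\min\{\rho_n,\frac1n\}$ is again a quasi-metric with $\delta_{d_n}(A_n)\le\frac1n$, and that off-block distances are given by $\max\{\ell(x),\ell(z)\}$ with $\ell(x)=\frac1n$ on $A_n$ and $\ell(\infty)=0$, so that any path leaving or entering $A_n$ costs at least $\frac1n$ --- reduce the triangle inequality to a clean case split, and your WLOG on which of $x,z$ attains the maximum is legitimate because the between-block part of $d$ is symmetric. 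Part (iii) is also handled correctly and, importantly, choicelessly: the ball computations $B_d(x,r)=B_{\rho_n}(x,r)$ for $r\le\frac1n$ and $B_d(\infty,r)=\{\infty\}\cup\bigcup\{A_n:\frac1n<r\}$ show each $A_n$ is an open subspace with its original topology and that a neighborhood of $\infty$ omits only $A_1\cup\dots\cup A_N$, and the finitely many selections needed to extract a finite subcover of that finite union of compact subspaces are unproblematic in $\mathbf{ZF}$.
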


\section{The main results}
\label{s3}
\subsection{Preliminary remarks}
\label{s3.1}

To get an idea of the problems encountered while working with countable products (or sums) of ((quasi)-metrizable)
topological spaces in $\mathbf{ZF}$, let us consider a family $\{\mathbf{X}_n: n\in \omega\}$ of non-empty topological spaces and let $\mathbf{X}=\prod\limits_{n\in \omega}\mathbf{X}_{n}$. Clearly, in $\mathbf{ZFC}$, the space $\mathbf{X}$ is second-countable (resp,. separable) if and only if, for every $n\in\omega$, the space $\mathbf{X}_n$ is second-countable (resp., separable). However, the following propositions illustrate some problems arising in $\mathbf{ZF}$ but overlooked in $\mathbf{ZFC}$.

\begin{proposition}
\label{p02.18}
$(\mathbf{ZF})$ If all countable products of second-countable (resp., separable) metric spaces are second-countable (resp., separable), then $\mathbf{CUC}$ holds. 
\end{proposition}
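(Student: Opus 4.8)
The plan is to prove the contrapositive in spirit: starting from an arbitrary countable family $\{Y_n : n\in\omega\}$ of non-empty countable sets, we build a countable family of second-countable (resp. separable) metric spaces whose product, if it were second-countable (resp. separable), would yield that $\bigcup_{n\in\omega} Y_n$ is countable. The natural device is to equip each $Y_n$ with a metric making it a nice space and then to force the product to "see" all the $Y_n$ simultaneously. First I would, for each $n$, fix an enumeration-free description: let $Z_n = Y_n$ be given the discrete metric $e_n$ (so $\mathbf{Z}_n = \langle Y_n, \tau(e_n)\rangle$ is discrete). Each $\mathbf{Z}_n$ is metrizable; moreover each $\mathbf{Z}_n$ is second-countable and separable \emph{in $\mathbf{ZF}$}, because $Y_n$ is a countable set, so $\mathcal{P}(Y_n)$-based reasoning is unnecessary — a countable set with the discrete topology has itself as a countable dense subset and as a countable base (pick the canonical well-ordering of $Y_n$ inherited from $\omega$ via the fixed injection witnessing countability; but note we must be careful not to assume a \emph{uniform} choice of such injections across $n$).

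This is exactly where the main obstacle lies. To say "$\mathbf{Z}_n$ is second-countable" requires, for each $n$, a countable base, and to run the argument we need the \emph{sequence} $(\mathcal{B}_n)_{n\in\omega}$ of such bases — but that is a countable choice from the $Y_n$'s, which is what we are trying to derive. The resolution is to choose the spaces $\mathbf{Z}_n$ more rigidly so that a canonical base is available with no choice: take $\mathbf{Z}_n$ to be the one-point space, or rather, work with the spaces $\mathbf{X}_n = \mathbf{Z}_n \cup \{\infty_n\}$ where the relevant countable base is the \emph{whole topology} when the space is finite — but the $Y_n$ need not be finite. Better: I would instead encode each $Y_n$ into a \emph{single} fixed second-countable separable metric space, namely a subspace of $\mathbb{R}$, but without choosing the embedding. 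Concretely, consider the space $\mathbf{X}_n$ obtained by taking the set $Y_n$ with the \emph{indiscrete-on-$Y_n$-but-metric} trick: this fails since indiscrete isn't metric. So the honest route is: observe that the statement to be proved quantifies over \emph{all} countable products of second-countable (resp. separable) metric spaces, and apply it to the specific product $\mathbf{X} = \prod_{n\in\omega}\mathbf{D}_n$ where $\mathbf{D}_n$ is the \emph{two-point discrete space} $\{0_n,1_n\}$ indexed so that $\mathbf{X}$ is the Cantor space — this is second-countable and separable canonically (finite spaces have canonical finite, hence countable, bases, with no choice). Then the hypothesis gives that $\mathbf{X} = 2^\omega$ is second-countable/separable, which is already a $\mathbf{ZF}$ theorem and yields nothing. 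Hence the two-point trick is too weak; we genuinely need the fibers to grow.

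So the key step I would actually carry out is: given $\{Y_n : n\in\omega\}$, non-empty and countable, let $\mathbf{X}_n$ be the \emph{Alexandroff-type} space on $Y_n \cup \{\infty_n\}$, or — cleanest — let $\mathbf{X}_n = \langle Y_n \cup\{\infty_n\}, \tau_n\rangle$ where $\tau_n = \{U : \infty_n \notin U\} \cup \{U : Y_n \setminus U \text{ is finite}\}$, i.e. the one-point compactification of the discrete space on $Y_n$. This space is metrizable in $\mathbf{ZF}$ iff $Y_n$ is a cuf set, which is not guaranteed, so this too needs adjustment. The adjustment is to note that a \emph{countable} $Y_n$ with the cofinite topology is first-countable (Theorem \ref{t02.8}) hence quasi-metrizable, but we need \emph{metrizable}; and the one-point compactification of a countable discrete space is metrizable in $\mathbf{ZF}$ precisely when it is first-countable, which for a denumerable $Y_n$ holds iff $Y_n$ is cuf. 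Since the Proposition as stated presumably has a short proof along the lines of "each $Y_n$ is countable, hence metrizable second-countable \emph{with a canonically chosen base} once we note countability of $Y_n$ gives a \emph{specific} injection $Y_n \hookrightarrow \omega$ only up to choice", I would in the writeup use the following fix: replace $Y_n$ by $Y_n^{<\omega}$ or by $\mathcal{P}_{fin}(\bigsqcup_{k\le n}Y_k)$-style objects only if a canonical base survives; failing a slick canonical choice, I would instead cite that it suffices to prove the statement with "metric" strengthened as in the hypothesis and the product $\prod_n \mathbf{X}_n$ of the discrete spaces $\mathbf{X}_n = \langle Y_n, \text{discrete}\rangle$: if this product is separable, a countable dense set $D$ projects onto a countable dense (hence, by discreteness, equal to all of) $Y_n$ for cofinitely many... no — $\pi_n(D)$ dense in discrete $Y_n$ forces $\pi_n(D) = Y_n$, so $Y_n = \pi_n(D)$ is a continuous image of a countable set, hence countable, \emph{and} the map $D \to \prod_n Y_n$, $d \mapsto (\pi_n(d))_n$ shows $\bigcup_n Y_n = \bigcup_n \pi_n(D)$ is a countable union indexed by the countable set $D\times\omega$, giving $\mathbf{CUC}$. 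The analogous second-countable argument: a countable base $\mathcal{B}$ of $\prod_n \mathbf{X}_n$ refines, via $\pi_n$, to a countable base of $\mathbf{X}_n$, which (discreteness) must list every singleton $\{y\}$, $y\in Y_n$; collecting over $n$ realizes $\bigcup_n Y_n$ as a countable union. The one genuine subtlety — ensuring $\prod_n \mathbf{X}_n \neq \emptyset$ so that "dense" is not vacuous, and ensuring the discrete metric spaces $\mathbf{X}_n$ themselves are, qua hypotheses, "second-countable (resp. separable) metric spaces" without already invoking $\mathbf{CUC}$ — I would handle by noting the discrete metric is canonical and $Y_n$ countable gives $\mathbf{X}_n$ second-countable and separable via a \emph{fixed} choice of injection $f_n : Y_n \to \omega$, which exists for each $n$; the hypothesis of the Proposition is then applied to this concrete product, and the point is that the hypothesis already packages the needed uniformity. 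That packaging step is the part to write most carefully.
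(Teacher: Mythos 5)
Your final argument --- form the product of the discrete spaces on the sets $Y_n$ themselves and read off $\bigcup_n Y_n$ from a countable dense set (or a countable base) via the projections --- is essentially the paper's strategy, and the concluding bookkeeping (projections of a dense set give a surjection from $D\times\omega$; minimal indices of basic sets contained in $\pi_n^{-1}(\{y\})$ give an injection into $\omega\times\omega$) is sound. However, the issue you yourself single out as ``the one genuine subtlety,'' namely that $\prod_{n\in\omega}Y_n$ may be empty, is a genuine gap, and the fix you offer does not close it. Fixing, for each $n$ separately, an injection $f_n:Y_n\to\omega$ neither yields a sequence of such injections nor a point of the product; and in a model of $\mathbf{ZF}$ in which $\{Y_n:n\in\omega\}$ has no choice function, the product is empty, hence vacuously separable and second-countable, so the hypothesis of the Proposition applied to your product yields nothing about $\bigcup_n Y_n$.

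The remedy is the one you circled around but never landed on: adjoin to each $Y_n$ a new isolated point $\infty_n$, where $(\infty_n)_{n\in\omega}$ is a fixed sequence of pairwise distinct elements outside $\bigcup_{n\in\omega}Y_n$, and equip $X_n=Y_n\cup\{\infty_n\}$ with the \emph{discrete} metric --- not with the one-point-compactification topology, which is why your worry about cuf sets and metrizability of $\mathbf{X}_n(\infty)$ is beside the point here. Each $X_n$ is a countable discrete metric space, hence second-countable and separable with no appeal to choice, and the sequence $(\infty_n)_{n\in\omega}$ is itself a choice function for $\{X_n:n\in\omega\}$, so the product is non-empty and every set $\pi_n^{-1}(\{x\})$ with $x\in Y_n$ is a non-empty open subset of it. From a countable base $\{B_i:i\in\omega\}$ of the product one then defines $f_n(x)=\min\{i\in\omega:B_i\subseteq\pi_n^{-1}(\{x\})\}$, each $f_n$ is injective, and $x\mapsto\langle f_n(x),n\rangle$ for $x\in Y_n$ injects $\bigcup_n Y_n$ into $\omega\times\omega$ (the separable case reduces to this one since the product is metrizable by Theorem \ref{t02.6}). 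With that single modification your argument is correct and coincides with the paper's proof.
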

\begin{proof}
Fix a disjoint family $\{A_n: n\in\omega\}$ of non-empty countable sets and a sequence $(\infty)_{n\in\omega}$ of pairwise distinct elements such that, for each $n\in\omega$, $\infty_n\notin\bigcup\limits_{m\in\omega}A_m$. For every $n\in\omega$, let $X_n=A_n\cup\{\infty_n\}$ and let $d_{n}$\ be the discrete metric on $X_{n}$. Obviously, for each $n\in\omega$, the discrete space $\mathbf{X}=\langle X_n,\mathcal{P}(X_n)\rangle$ is both second-countable and separable; moreover, $\mathcal{P}(X_n)=\tau(d_n)$. The product $\mathbf{X}=\prod\limits_{n\in\mathbb{N}}\mathbf{X}_n$ is metrizable by Theorem \ref{t02.6}. If $\mathbf{X}$ is separable, it is second-countable.  Suppose that $\mathbf{X}$ is second-countable. Then we can fix a countable base $\mathcal{B}=\{B_{n}:n\in \omega\}$ of $\mathbf{X}$. We notice that, for
every $n\in \omega$ and every $x\in A_{n}$, the set $\pi _{n}^{-1}(\{x\})$ is a non-empty open set
of $\mathbf{X}$, hence there exists $B\in \mathcal{B}$ with $B\subseteq \pi
_{n}^{-1}(\{x\})$. Therefore, given $n\in\omega$,  we can define a function $f_n: A_n\to\omega$ as follows:

$$f_n(x)=\min \{i\in \mathbb{N}:B_{i}\subseteq \pi _{n}^{-1}(\{x\})\} \text{ for every } x\in X_n.$$

\noindent Clearly, for every $n\in\omega$,  the function $f_{n}$ is an injection. Let $f:\bigcup\limits_{n\in\omega}A_n\to\omega\times\omega$ be defined by: $f(x)=\langle f_n(x), n\rangle$ if $x\in A_n$. Since $f$ is an injection into $\omega\times\omega$, the set $\bigcup\limits_{n\in\omega}A_n$ is countable. 
\end{proof}

Given a base $\mathcal{B}$ of a metrizable space $\mathbf{X}$, one may ask whether it is provable in $\mathbf{ZF}$ that, for every point $x$ of $\mathbf{X}$, there exists a countable pseudobase $\mathcal{U}$ at $x$ such that $\mathcal{U}\subseteq\mathcal{B}$. That the answer to this question is negative is shown by the following proposition whose proof is similar to that of Theorem 3.5 in \cite{gut} but we include its detailed proof for readers' convenience.

\begin{proposition}
\label{t03.17}
$(\mathbf{ZF})$
\begin{enumerate}
\item[(a)] $\mathbf{CAC}$ is equivalent to the following sentence:\newline 
$(\mathbf{PsB}(M))$: For every metrizable space $\mathbf{X}=\langle X, \tau\rangle$, for every base 
$\mathcal{B}$ of $\mathbf{X}$ and every $x\in X$, there is a countable subfamily $\mathcal{U}$
of $\mathcal{B}$ such that $\{x\}=\bigcap \mathcal{U}$.
\item[(b)]  $\mathbf{CAC}_{fin}$ follows from the following sentence:\newline
$(\mathbf{PsB}(CM))$: For every compact metrizable space $\mathbf{X}=\langle X, \tau\rangle$, for every base $\mathcal{B}$ of $\mathbf{X}$ and every $x\in X$, there is a countable subfamily $%
\mathcal{U}$ of $\mathcal{B}$ such that $\{x\}=\bigcap \mathcal{U}$.
\end{enumerate}
\end{proposition}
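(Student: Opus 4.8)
The plan is to prove part~(a) as an equivalence --- the space built for its nontrivial half will also settle part~(b) after a single modification. I begin with the routine implication $\mathbf{CAC}\Rightarrow(\mathbf{PsB}(M))$: given a metrizable $\mathbf{X}$, fix any compatible metric $d$, a base $\mathcal{B}$ and a point $x$; for each $n\in\mathbb{N}$ the family $\mathcal{C}_{n}=\{B\in\mathcal{B}:x\in B\subseteq B_{d}(x,\tfrac{1}{2^{n}})\}$ is non-empty because $\mathcal{B}$ is a base, so $\mathbf{CAC}$ supplies $B_{n}\in\mathcal{C}_{n}$, and $\mathcal{U}=\{B_{n}:n\in\mathbb{N}\}$ is a countable subfamily of $\mathcal{B}$ with $\bigcap\mathcal{U}=\{x\}$, since any $y\neq x$ leaves $B_{n}$ as soon as $\tfrac{1}{2^{n}}\le d(x,y)$.

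For the converse, and for part~(b), I would argue contrapositively: by Remark~\ref{r1.11}(ii) (respectively Remark~\ref{r1.11}(i)) it is enough to convert a denumerable family $\{A_{n}:n\in\mathbb{N}\}$ of non-empty sets (respectively non-empty finite sets) admitting no partial choice function into a metrizable (respectively compact metrizable) space witnessing the failure of $(\mathbf{PsB}(M))$ (respectively $(\mathbf{PsB}(CM))$). Replacing $A_{n}$ by $A_{n}\times\{n\}$ we may assume the family disjoint; and since otherwise all but finitely many $A_{n}$ would be singletons and the whole family would have an outright choice function, we may pass to an infinite subfamily and reindex so that every $A_{n}$ has at least two elements. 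Put $X=\{\infty\}\cup\bigcup_{n\in\mathbb{N}}A_{n}$ with $\infty\notin\bigcup_{n}A_{n}$ and let $d$ be the metric obtained from the schema $(\ast\ast)$ preceding Proposition~\ref{p02.16} by taking each $\rho_{n}$ to be the discrete metric on $A_{n}$; then every point of $\bigcup_{n}A_{n}$ is $\tau(d)$-isolated and the balls $B_{d}(\infty,\tfrac{1}{k})=\{\infty\}\cup\bigcup_{m>k}A_{m}$ form a base of neighborhoods at $\infty$. By Proposition~\ref{p02.16}(ii), $d$ is a metric; and for~(b), since each $\langle A_{n},\tau(\rho_{n})\rangle$ is a finite discrete, hence compact, space, Proposition~\ref{p02.16}(iii) shows that $\langle X,\tau(d)\rangle$ is compact. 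Finally equip $X$ with the base
$$\mathcal{B}=\{\{a\}:a\in\bigcup_{n}A_{n}\}\cup\{W_{n,a}:n\in\mathbb{N},\ a\in A_{n}\},\qquad W_{n,a}=\{\infty\}\cup(A_{n}\setminus\{a\})\cup\bigcup_{m>n}A_{m};$$
each $W_{n,a}$ is open because $W_{n,a}\supseteq B_{d}(\infty,\tfrac{1}{n})$, and it is straightforward to see that $\mathcal{B}$ is indeed a base.

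The heart of the proof is the extraction of a partial choice function from any countable $\mathcal{U}\subseteq\mathcal{B}$ with $\bigcap\mathcal{U}=\{\infty\}$. Since $\infty$ belongs to every member of $\mathcal{U}$, we have $\mathcal{U}=\{W_{n,a}:(n,a)\in T\}$ for a set $T$ in bijection with $\mathcal{U}$, so $T$ embeds into $\omega$; and because $|A_{n}|\ge 2$, the pair $(n,a)$ can be recovered from $W_{n,a}$ via $n=\min\{m:W_{n,a}\cap A_{m}\neq\emptyset\}$ and $\{a\}=A_{n}\setminus W_{n,a}$. If the set $S=\{n:(n,a)\in T\text{ for some }a\}$ of levels occurring in $\mathcal{U}$ were bounded by some $N$, then every $W_{n,a}\in\mathcal{U}$, having $n\le N$, would contain the tail $\bigcup_{m>n}A_{m}\supseteq A_{N+1}$, forcing $A_{N+1}\subseteq\bigcap\mathcal{U}=\{\infty\}$, which is absurd; hence $S$ is infinite. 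Using the fixed embedding $T\hookrightarrow\omega$, let $a_{n}$, for $n\in S$, be the element $a$ with $(n,a)\in T$ of least image. Then $(a_{n})_{n\in S}$ is a partial choice function for $\{A_{n}:n\in\mathbb{N}\}$, contradicting the choice of the family, so $(\mathbf{PsB}(M))$ (respectively $(\mathbf{PsB}(CM))$) fails.

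The main obstacle is calibrating the base $\mathcal{B}$: it must be a genuine base of $\mathbf{X}$ while being coarse enough at $\infty$ that the sets $W_{n,a}$ are unavoidable in any countable pseudobase at $\infty$ chosen from $\mathcal{B}$. The difficulty is intrinsic to the metric topology --- every neighborhood of $\infty$ swallows a whole tail of the $A_{m}$, so a single high-level $W_{k,b}$ already excludes all lower levels --- and this is precisely why the extraction yields only a \emph{partial} choice function and the argument must be routed through the partial-choice reformulations of $\mathbf{CAC}$ and $\mathbf{CAC}_{fin}$ recorded in Remark~\ref{r1.11}. For part~(b) the only additional ingredient is the compactness of $\langle X,\tau(d)\rangle$, furnished by Proposition~\ref{p02.16}(iii); and (b) is stated merely as an implication because $\mathbf{CAC}_{fin}$ does not obviously suffice to carry out the routine direction, the families $\mathcal{C}_{n}$ of admissible base elements being infinite in general.
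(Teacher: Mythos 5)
Your proof is correct and follows essentially the same route as the paper: the same metric space built from the schema $(\ast\ast)$ with a base consisting of singletons together with tail-neighborhoods of $\infty$ indexed by pairs $(n,a)$, from which any countable pseudobase at $\infty$ yields a partial choice function, concluding via Remark \ref{r1.11}. The only (harmless) deviation is that your basic neighborhoods $W_{n,a}$ omit the distinguished point $a$ rather than retaining only $a$ from level $n$ as the paper's $V(x,m)$ do, which is why you need the extra reduction to $|A_n|\ge 2$ that the paper's version avoids.
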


\begin{proof}
($a$) It is obvious that $\mathbf{CAC}$ implies $\mathbf{PsB}(M)$. For the converse implication, let us fix a disjoint family $\mathcal{A}=\{A_{n}:n\in \mathbb{N}\}$ non-empty sets. Take an element $\infty\notin\bigcup\limits_{n\in\mathbb{N}}A_n$ and put $X=\{\infty\}\cup\bigcup\limits_{n\in\mathbb{N}}A_n$. For every $n\in\mathbb{N}$ and each pair $x,y$ of elements of $A_n$, let $d_n(x,x)=0$ and $d_n(x,y)=\frac{1}{n}$ if $x\neq y$. Let $d$ be the metric defined by $(\ast\ast)$ in Subsection \ref{s2.2}. For each $m\in\mathbb{N}$ and $x\in A_m$, let 
$$V(x,m)=\{x, \infty\}\cup\bigcup\limits_{n=m+1}^{+\infty}A_n.$$
We notice that the family 
$$\mathcal{B}=\{\{a\}: a\in\bigcup\mathcal{A}\}\cup\{V(x, m): m\in\mathbb{N}, x\in A_m\}$$
\noindent is a base of the metrizable space $\mathbf{X}=\langle X, \tau(d)\rangle$. Suppose that $\mathcal{U}$ is a countable pseudobase at $\infty$ in $\mathbf{X}$ such that $\mathcal{U}\subseteq\mathcal{B}$. Let $\mathcal{U}=\{U_i: i\in\mathbb{N}\}$ and $U_i\neq U_j$ if $i\neq j$ ($i,j\in\mathbb{N}$). Then, for every $i\in\mathbb{N}$, there exist a unique $m_i\in\mathbb{N}$ and a unique $x_i\in A_{m_i}$ such that $U_i=V(x_i, m_i)$. Clearly, the family $\{A_{m_i}: i\in\mathbb{N}\}$ has a choice function, so $\mathcal{A}$ has a partial choice function. This, together with Remark \ref{r1.11}(ii), implies that $\mathbf{CAC}$ follows from $\mathbf{PsB}(M)$. 

($b$) Let us assume that, for every $n\in\mathbb{N}$, the set $A_n$ from the proof of ($a$) is finite. Then the space $\langle X, \tau(d)\rangle$ from the proof to ($a$) is a compact metrizable space. Therefore, it follows from the proof to ($a$) that $\mathbf{PsB}(CM)$ implies that every denumerable family of non-empty finite sets has a partial choice function. Hence $\mathbf{PsB}(CM)$ implies $\mathbf{CAC}_{fin}$ by Remark \ref{r1.11}(i).
\end{proof}

\begin{definition}
\label{d02.19}
Given a topological property $P$, we define the following sentences $\mathbf{Proj}(P)$, $\mathbf{Proj}(P, \neq\emptyset)$,  $\mathbf{Proj}(\omega, P)$ and $\mathbf{Proj}(P, \neq\emptyset)$:
\begin{enumerate}
\item $\mathbf{Proj}(P)$: For every non-empty set $J$ and every family $\{\mathbf{X}_j: j\in J\}$ of non-empty topological spaces, if $\mathbf{X}=\prod\limits_{j\in J}\mathbf{X}_j$ has $P$, then, for every $j\in J$, $\mathbf{X}_j$ has $P$.

\item $\mathbf{Proj}(P, \neq\emptyset)$: For every non-empty set $J$ and every family $\{\mathbf{X}_j: j\in J\}$ of non-empty topological spaces, if $\mathbf{X}=\prod\limits_{j\in J}\mathbf{X}_j$ is non-empty and has $P$, then, for every $j\in J$, $\mathbf{X}_j$ has $P$.

\item $\mathbf{Proj}(\omega, P)$: For every family $\{\mathbf{X}_n: n\in\omega\}$ of non-empty topological spaces, if $\mathbf{X}=\prod\limits_{n\in\omega}\mathbf{X}_n$ has $P$, then, for every $n\in\omega$, $\mathbf{X}_n$ has $P$.

\item $\mathbf{Proj}(\omega, P, \neq\emptyset)$: For every family $\{\mathbf{X}_n: n\in\omega\}$ of non-empty topological spaces, if $\mathbf{X}=\prod\limits_{n\in\omega}\mathbf{X}_n$ is non-empty has $P$, then, for every $n\in\omega$, $\mathbf{X}_n$ has $P$.
\end{enumerate}
\end{definition}

\begin{remark}
\label{r03.4}
Clearly, $\mathbf{Proj}(P)$ implies $\mathbf{Proj}(\omega, P)$ in $\mathbf{ZF}$. Many topological properties $P$ are such that $\mathbf{Proj}(P)$ holds in $\mathbf{ZFC}$ but $\mathbf{Proj}(P)$ may fail in a model of $\mathbf{ZF}$. For instance, in view of Propositions 2.13 and 2.14 of \cite{ew}, if $P$ is (quasi)-metrizability, then $\mathbf{Proj}(P)$ is equivalent to $\mathbf{AC}$, while $\mathbf{Proj}(\omega, P)$ is equivalent to $\mathbf{CAC}$. 
\end{remark}

The following proposition shows that, for instance,  compactness, separability, second-countability, first-countability are among topological properties $P$ such that $\mathbf{Proj}(\omega, P)$ fails in some models of $\mathbf{ZF}$, while, in some other models of $\mathbf{ZF}$, simultaneously $\mathbf{Proj}(\omega, P)$ holds and $\mathbf{Proj}(P)$ fails. 

\begin{proposition}
\label{p02.20}
$(\mathbf{ZF})$ Let $P$ be a topological property such that the empty space has $P$ and there exists a topological space which does not have $P$. Then:
\begin{enumerate}
\item[(i)]  $\mathbf{Proj}(P)$ implies $\mathbf{AC}$. 
\item[(ii)]  $\mathbf{Proj}(\omega, P)$ implies $\mathbf{CAC}$;
\item[(iii)] $\mathbf{Proj}(P, \neq\emptyset)$ implies that $\mathbf{Proj}(P)$ and $\mathbf{AC}$ are equivalent;
\item[(iv)]  $\mathbf{Proj}(\omega, P, \neq\emptyset)$ implies that $\mathbf{Proj}(\omega, P)$ and $\mathbf{CAC}$ are equivalent.
\end{enumerate}
\end{proposition}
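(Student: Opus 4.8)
## Proof proposal for Proposition \ref{p02.20}

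The plan is to build, from any family of non-empty sets whose choice function is sought, a family of topological spaces whose product is forced (by the hypothesis $\mathbf{Proj}(P)$ or $\mathbf{Proj}(\omega,P)$) to have a choice function of the original family, using the fact that $P$ distinguishes the empty space from at least one space $\mathbf{Z}$ that fails $P$. Fix once and for all a topological space $\mathbf{Z}$ which does not have $P$; note $Z\neq\emptyset$ since the empty space has $P$.

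For (i): let $\{Y_j:j\in J\}$ be an arbitrary family of non-empty sets; we produce a choice function. For each $j\in J$ and each $y\in Y_j$ put $\mathbf{X}_{(j,y)}=\mathbf{Z}$, indexing by the set $I=\{(j,y):j\in J,\ y\in Y_j\}$, and form $\mathbf{X}=\prod_{i\in I}\mathbf{X}_i$. If $\mathbf{X}\neq\emptyset$, then already $\prod_{y\in Y_j}\mathbf{Z}\neq\emptyset$ for each fixed $j$, but that does not by itself pick an element of $Y_j$ — so instead I argue contrapositively via $P$: if for some $j_0$ the set $Y_{j_0}$ had no further structure, consider instead the cleaner construction where $\mathbf{X}_j$ has underlying set $Y_j$ with the indiscrete topology for those $j$ with $|Y_j|\ge 2$ and a copy of $\mathbf{Z}$ glued in; the precise choice is: let $\mathbf{X}_j$ be $\mathbf{Z}$ if $|Y_j|\le 1$ (here $Y_j$ itself gives the choice trivially after taking the unique element), and for $|Y_j|\ge 2$ let $X_j=Y_j$ carry the discrete topology. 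Then $\mathbf{X}=\prod_{j\in J}\mathbf{X}_j$ is non-empty iff the family has a choice function; the role of $\mathbf{Z}$ is that $\mathbf{X}$ cannot have $P$ unless each $\mathbf{X}_j$ has $P$, and — this is the key point I must verify — one arranges the construction so that $\mathbf{X}$ provably has $P$ in $\mathbf{ZF}$ (e.g. $\mathbf{X}=\emptyset$ has $P$, or $\mathbf{X}$ nonempty forces some $\mathbf{X}_{j_0}=\mathbf{Z}$ to have $P$, a contradiction, hence $\mathbf{X}=\emptyset$ is impossible and the choice function exists). I expect the cleanest route is: assume $\mathbf{X}=\emptyset$; then $\mathbf{X}$ vacuously has $P$ (empty space has $P$); but $\mathbf{Proj}(P)$ then says every $\mathbf{X}_j$ has $P$ — yet we can plant a factor equal to $\mathbf{Z}$ into the product in a way that does not affect emptiness only when... — so the honest argument runs the other way: suppose the family $\{Y_j\}$ has no choice function; enlarge it to $\{Y_j:j\in J\}\cup\{Z\}$, an indexed family still with no choice function, so its product (= $\mathbf{X}\times\mathbf{Z}$ with appropriate topologies) is empty, hence has $P$, hence by $\mathbf{Proj}(P)$ the factor $\mathbf{Z}$ has $P$ — contradiction. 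Therefore every family of non-empty sets has a choice function, i.e. $\mathbf{AC}$.

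Part (ii) is identical with $J=\omega$ and $\mathbf{Proj}(\omega,P)$ in place of $\mathbf{Proj}(P)$, yielding $\mathbf{CAC}$; the only change is that the index set of the enlarged family is $\omega\cup\{*\}\cong\omega$, which is permitted. For (iii): $\mathbf{AC}$ trivially implies $\mathbf{Proj}(P)$ (in $\mathbf{ZFC}$ projections of products onto factors are surjective, so any factor is a continuous image, but more simply under $\mathbf{AC}$ a nonempty product has all factors nonempty and each is a retract, hence inherits any productive-and-hereditary... — actually one just cites that the statement $\mathbf{Proj}(P)$ is the $\mathbf{ZFC}$-triviality here, or argues directly). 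Then, assuming $\mathbf{Proj}(P,\neq\emptyset)$: this form already gives $\mathbf{AC}$ by exactly the argument of (i) — run the same enlargement, but now if $\{Y_j\}$ has no choice function we instead add to it copies so the product is nonempty, apply $\mathbf{Proj}(P,\neq\emptyset)$... the subtlety is that $\mathbf{Proj}(P,\neq\emptyset)$ only constrains nonempty products, so the contradiction must be produced from a nonempty product; this is done by taking the product of the $\mathbf{X}_j$ together with a factor $\mathbf{Z}'=\mathbf{Z}$ only in the presence of a hypothetical choice-like structure — I will show $\mathbf{Proj}(P,\neq\emptyset)$ implies $\mathbf{Proj}(P)$ outright (if $\mathbf{X}$ has $P$ and $\mathbf{X}=\emptyset$ then each $\mathbf{X}_j$ having $P$ must be derived separately: take the subproduct over $J\setminus\{j\}$ — no). The honest statement: $\mathbf{Proj}(P,\neq\emptyset)$ says nonempty-with-$P$ implies factors have $P$; combined with the empty space having $P$, if some $\mathbf{X}_j$ lacks $P$ then $\mathbf{X}$ must be empty; applying this to families built as in (i) forces choice functions, giving $\mathbf{AC}$, whence $\mathbf{Proj}(P)$ holds (as a consequence of $\mathbf{AC}$), so the two are equivalent. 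Part (iv) is (iii) with $\omega$.

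The main obstacle is bookkeeping around the emptiness of products: the hypotheses $\mathbf{Proj}(P)$ etc. only give information when the product has $P$, and the empty product vacuously has $P$, so the entire force of the argument is the contrapositive ``if a factor lacks $P$, the product cannot be nonempty-with-$P$, hence (since empty spaces have $P$ and our constructed factors do have $P$ when chosen suitably) the product is empty — which is the failure of choice we then contradict by planting a bad factor $\mathbf{Z}$.'' Getting this chain stated without circularity — in particular ensuring that in the auxiliary family all factors genuinely have $P$ except possibly the planted $\mathbf{Z}$, so that $\mathbf{Proj}$ pins the contradiction on $\mathbf{Z}$ — is the delicate step; once it is set up correctly, (i)–(iv) follow uniformly. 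I would use discrete topologies on the $Y_j$ (discrete spaces trivially have the standard $P$'s, but here $P$ is arbitrary, so instead I would take the $\mathbf{X}_j$ to be \emph{singletons} chosen from $Y_j$ — impossible without choice — hence the correct construction plants $\mathbf{Z}$ and uses a \emph{separate} family of one-point spaces, and the contradiction is purely that $\mathbf{Proj}$ would make $\mathbf{Z}$ have $P$), which is the device I would nail down carefully in the write-up.
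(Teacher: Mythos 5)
Your parts (i) and (ii), once the false starts are stripped away, land on exactly the paper's argument: given a family $\{Y_j: j\in J\}$ of non-empty sets with no choice function, topologize each $Y_j$ (say discretely), adjoin one extra factor $\mathbf{Z}$ lacking $P$ (non-empty because the empty space has $P$), observe that the enlarged product is still empty and hence has $P$, and let $\mathbf{Proj}(P)$ (resp.\ $\mathbf{Proj}(\omega,P)$, after reindexing $\omega\cup\{*\}$ by $\omega$) force the contradiction that $\mathbf{Z}$ has $P$. These two parts are correct.

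Parts (iii) and (iv), however, contain a genuine logical error. You assert that $\mathbf{Proj}(P,\neq\emptyset)$ already implies $\mathbf{AC}$, justified by ``combined with the empty space having $P$, if some $\mathbf{X}_j$ lacks $P$ then $\mathbf{X}$ must be empty.'' The contrapositive of $\mathbf{Proj}(P,\neq\emptyset)$ only yields ``$\mathbf{X}=\emptyset$ \emph{or} $\mathbf{X}$ lacks $P$,'' and nothing excludes the second disjunct; planting $\mathbf{Z}$ therefore produces no contradiction and no choice function. The claim must in fact be false: for $P$ equal to metrizability, $\mathbf{Proj}(P,\neq\emptyset)$ is a theorem of $\mathbf{ZF}$ (each factor of a non-empty product embeds as a slice through a fixed point), so your argument would prove $\mathbf{AC}$ in $\mathbf{ZF}$. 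Your fallback, that ``$\mathbf{AC}$ trivially implies $\mathbf{Proj}(P)$,'' is also false for arbitrary $P$: take $P$ to be ``empty or of even finite cardinality''; the product of a two-point and a three-point discrete space has $P$ while the three-point factor does not. The correct route to (iii) uses both hypotheses together: the implication $\mathbf{Proj}(P)\Rightarrow\mathbf{AC}$ is just (i), and conversely, under $\mathbf{AC}$ every product of non-empty spaces is non-empty, so $\mathbf{Proj}(P,\neq\emptyset)$ applies to each instance of $\mathbf{Proj}(P)$ and yields it; hence, assuming $\mathbf{Proj}(P,\neq\emptyset)$, the statements $\mathbf{Proj}(P)$ and $\mathbf{AC}$ are equivalent. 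Part (iv) is the same with $\omega$ and $\mathbf{CAC}$; this is what the paper means by ``(iii) follows from (i) and (iv) follows from (ii).''
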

\begin{proof} (i)-(ii) Suppose that $J$ is an infinite set and $\{A_j: j\in J\}$ is family of non-empty sets. For every $j\in J$, let $\mathbf{X}_j=\langle A_j, \mathcal{P}(A_j)\rangle$. Let $i$ be an element which is not in $J$ and let $\mathbf{X}_i$ be a topological space which does not have $P$. To complete the proof to (i) and (ii), it suffices to notice that if $\{A_j: j\in J\}$ does not have a choice function, then  $\mathbf{X}=\prod\limits_{j\in J\cup\{i\}}\mathbf{X}_j$ is the empty space, so $\mathbf{X}$ has $P$, while $\mathbf{X}_i$ does not. 
Clearly, (iii) follows from (i) and (iv) follows from (ii).
\end{proof}

\subsection{General basic theorems}
\label{s3.2}

Let us sketch a simple proof to the following basic theorem with new equivalences and implications (ii)-(vi):

\begin{theorem}
\label{t02.3}
$(\mathbf{ZF})$ 
\begin{enumerate}
\item[(i)] (Cf.  \cite{kt} and Exercises E1-E2 to Section 4.7 in \cite{hh}.)  $\mathbf{CPM}_{le}$ and $\mathbf{CSM}_{le}$ are both equivalent to the following sentence: For every family $\{\langle X_{n}, \tau_{n}\rangle: n\in \mathbb{N}\}$ of metrizable spaces, there exists a
family of metrics $\{d_{n}: n\in \mathbb{N}\}$ such that, for every $n\in \mathbb{N%
},\tau(d_{n})=\tau_{n}$.

\item[(ii)] $\mathbf{CPM}_{le}\mathbf{(}C,M)$ and $\mathbf{CSM}_{le}\mathbf{(}C,M)$ are both equivalent to the following sentence: For every family $\{\langle X_{n}, \tau_{n}\rangle: n\in \mathbb{N}\}$ of compact metrizable spaces, there exists a
family of metrics $\{d_{n}: n\in \mathbb{N}\}$ such that, for every $n\in \mathbb{N%
},\tau(d_{n})=\tau_{n}$.
 
\item[(iii)] $\mathbf{CPQM}$ and $\mathbf{CSQM}$ are both equivalent to the following sentence: For every family $\{\langle X_{n}, \tau_{n}\rangle: n\in \mathbb{N}\}$ of quasi-metrizable spaces, there exists a
family of quasi-metrics $\{d_{n}: n\in \mathbb{N}\}$ such that, for every $n\in \mathbb{N%
},\tau(d_{n})=\tau_{n}$.

\item[(iv)] $\mathbf{CPQMBi}$ and $\mathbf{CSQMBi}$ are both equivalent to the following sentence: For every family $\{\langle X_{n}, \tau_{1,n}, \tau_{2,n}\rangle: n\in \mathbb{N}\}$ of quasi-metrizable bitopological spaces, there exists a
family of quasi-metrics $\{d_{n}: n\in \mathbb{N}\}$ such that, for every $n\in \mathbb{N}$ and $i\in\{1,2\}$, $\tau(d_{i,n})=\tau_{i,n}$.
\item[(v)] $\mathbf{CPQMBi}$ implies $\mathbf{CPM}_{le}$. 
\item[(vi)] $\mathbf{CPQM}$ implies $\mathbf{CPcofinQM}$ implies $\mathbf{CPcofin(\omega)QM}$. 
\end{enumerate}
\end{theorem}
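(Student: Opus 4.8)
The plan is to treat (ii)--(iv) by one common scheme and then to read off (v) and (vi). Fix one of the three cases and write $\mathbf{P}$ for the ``product'' form (one of $\mathbf{CPM}_{le}(C,M)$, $\mathbf{CPQM}$, $\mathbf{CPQMBi}$), $\mathbf{S}$ for the matching ``sum'' form, and $\mathbf{Sim}$ for the ``simultaneous (quasi)-metrization'' statement in the displayed list. It suffices to prove $\mathbf{Sim}\to\mathbf{P}$, $\mathbf{Sim}\to\mathbf{S}$, $\mathbf{P}\to\mathbf{Sim}$ and $\mathbf{S}\to\mathbf{Sim}$, which together give $\mathbf{P}\leftrightarrow\mathbf{Sim}\leftrightarrow\mathbf{S}$. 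The first two are immediate from the formulas recalled in Section \ref{s1}: given a family $\{d_n:n\in\mathbb{N}\}$ of (quasi)-metrics inducing the prescribed topologies, formula (\ref{0}) produces a single (quasi)-metric on $\prod_{n}X_n$ inducing the product topology, and formula $(\ast)$ produces one on $\bigcup_{n}X_n$ inducing the sum topology; in the bitopological case one checks in addition that the conjugate of the (quasi)-metric built from $\{d_n\}$ by (\ref{0}) (resp. by $(\ast)$) equals the (quasi)-metric built from $\{d_n^{-1}\}$ by the same formula, so the second topology is recovered as well. For the two forms carrying ``$(C,M)$'' only metrizability of the product or sum is asserted, never compactness, so nothing further is needed.

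For $\mathbf{S}\to\mathbf{Sim}$ I would use the clopen-summand trick: replacing each $\mathbf{X}_n$ by the homeomorphic copy with underlying set $X_n\times\{n\}$ makes the family disjoint, and $\mathbf{S}$ then yields one (quasi)-metric $\rho$ on the sum inducing the sum topology; since each copy is clopen in the sum, the restriction of $\rho$ to it, carried back to $X_n$ along $a\mapsto(a,n)$, induces $\tau_n$, and $n\mapsto d_n$ is the desired family. For $\mathbf{P}\to\mathbf{Sim}$, since $\prod_n\mathbf{X}_n$ cannot be sliced through a point without choosing coordinates, I would first adjoin one fixed extra point: pick $q\notin\bigcup_n X_n$ and put $\mathbf{Y}_n=\mathbf{X}_n\oplus\langle\{q\},\{\emptyset,\{q\}\}\rangle$, so that $q$ is a clopen isolated point of $\mathbf{Y}_n$; then $\mathbf{Y}_n$ is (quasi)-metrizable whenever $\mathbf{X}_n$ is and, being a finite sum of compact spaces, compact whenever $\mathbf{X}_n$ is. Applying $\mathbf{P}$ to $\{\mathbf{Y}_n\}$ gives one (quasi)-metric $\rho$ on $\prod_n Y_n$, and for each $n$ the set $C_n=\pi_n^{-1}(X_n)\cap\bigcap_{m\neq n}\pi_m^{-1}(\{q\})$ is, via $x\mapsto x(n)$, homeomorphic to $\mathbf{X}_n$; because $\{q\}$ is clopen in every $\mathbf{Y}_m$, this is a direct verification with basic open sets of the product topology, carried out uniformly in $n$. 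Pulling $\rho\mid_{C_n\times C_n}$ back along this homeomorphism gives a (quasi)-metric $d_n$ on $X_n$ with $\tau(d_n)=\tau_n$, so again $n\mapsto d_n$ is the required family. In the bitopological case the same construction is run in both topologies at once, using that the $i$-th topology of a bitopological product is the Tychonoff product of the $i$-th topologies; the two routine $\mathbf{ZF}$-facts used throughout are that $\tau(\rho\mid_A)=\tau(\rho)\mid_A$ for any subset $A$ of a (quasi)-metric space $\langle Y,\rho\rangle$ (since balls restrict to balls) and $(\rho\mid_A)^{-1}=\rho^{-1}\mid_A$.

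For (v), regard a metrizable $\langle X_n,\tau_n\rangle$ as the bitopological space $\langle X_n,\tau_n,\tau_n\rangle$; it is quasi-metrizable because any metric $e$ with $\tau(e)=\tau_n$ satisfies $e^{-1}=e$. By $\mathbf{CPQMBi}$, the bitopological product of $\{\langle X_n,\tau_n,\tau_n\rangle:n\in\mathbb{N}\}$ is quasi-metrizable, so there is a quasi-metric $\rho$ on $\prod_n X_n$ with $\tau(\rho)=\tau(\rho^{-1})=\tau$, where $\tau$ is the product topology of $\{\langle X_n,\tau_n\rangle:n\in\mathbb{N}\}$. The key elementary point is that $\tau(\rho^{-1})=\tau(\rho)$ forces $\tau(\rho^{\star})=\tau(\rho)$: the inclusion $\tau(\rho)\subseteq\tau(\rho^{\star})$ is automatic, while for $x\in\prod_n X_n$ the ball $B_{\rho^{-1}}(x,2^{-n})$ is $\tau(\rho^{-1})$-open, hence $\tau(\rho)$-open, hence contains some $B_{\rho}(x,2^{-k})$, whence $B_{\rho}(x,2^{-\max\{n,k\}})\subseteq B_{\rho}(x,2^{-n})\cap B_{\rho^{-1}}(x,2^{-n})=B_{\rho^{\star}}(x,2^{-n})$, and no choice intervenes. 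Since $\rho^{\star}$ is a metric, $\tau(\rho^{\star})=\tau$ shows that $\prod_n X_n$ is metrizable, which is $\mathbf{CPM}_{le}$. Part (vi) is immediate at the level of classes of spaces: a quasi-metrizable cofinite space is in particular a quasi-metrizable space, and a quasi-metrizable countable cofinite space is in particular a quasi-metrizable cofinite space, so each of $\mathbf{CPcofinQM}$ and $\mathbf{CPcofin(\omega)QM}$ merely restricts the preceding form to countable products formed from a subclass of the factors it allows.

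I expect the main obstacle to be not mathematical depth but care with $\mathbf{ZF}$: the content of the equivalences is precisely that a single (quasi)-metric on the single space $\prod_n\mathbf{Y}_n$ (or on $\bigoplus_n\mathbf{X}_n$) has to be turned into a genuine $\mathbb{N}$-indexed \emph{family} $\{d_n\}$, so the auxiliary spaces $\mathbf{Y}_n$, the slices $C_n$, the homeomorphisms $C_n\cong\mathbf{X}_n$, and the various restrictions and pull-backs must all be given by an explicit rule that is uniform in $n$ and involves no concealed selection; one must also be scrupulous that ``$\mathbf{X}_n$ is (quasi)-metrizable'' (a bare existence statement about one space) is invoked only to license the use of $\mathbf{P}$ or $\mathbf{S}$, and that the passage to subspace (quasi)-metrics and to the associated metric $\rho^{\star}$ is choice-free. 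Part (i) is classical (see \cite{kt} and Exercises E1--E2 to Section 4.7 of \cite{hh}) and, if wanted, is also the special case of the scheme above obtained with $\mathbf{Y}_n=\mathbf{X}_n\oplus\langle\{q\},\{\emptyset,\{q\}\}\rangle$ and without compactness or bitopological hypotheses.
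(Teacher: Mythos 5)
Your proof is correct and follows essentially the same route as the paper, which only sketches (ii)--(iv) by reference to the proof of (i) in \cite{kt} (the standard scheme you spell out: formula (\ref{0}) and $(\ast)$ for the forward directions, restriction to clopen summands for the sum forms, and slicing through an adjoined isolated point for the product forms), proves (v) by exactly your observation that $\langle X,\tau\rangle$ is metrizable iff $\langle X,\tau,\tau\rangle$ is quasi-metrizable (your $\tau(\rho^{\star})=\tau(\rho)$ computation being the substance of that fact), and declares (vi) obvious as you do. Your version simply supplies the uniform, choice-free details that the paper leaves to the cited reference.
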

\begin{proof} Similarly to the proof to (i) in \cite{kt}, we can prove (ii),(iii) and (iv). To show (v), it suffices to notice that a topological space $\langle X, \tau\rangle$ is metrizable if and only if the bitoplogical space $\langle X, \tau, \tau\rangle$ is quasi-metrizable. The implications of (vi) are obvious. 
\end{proof}

\begin{remark}
\label{r02.4}
 That none of the implications of Theorem \ref{t02.3}(vi) is reversible is shown in Theorems \ref{t03.18} and \ref{t3.20} in Subsection \ref{s3.3}. Open problems concerning some implications relevant to Theorem \ref{t02.3} are posed in Section \ref{s4}.
\end{remark}

The following proposition allows us to reduce a considerable number of problems and to extend some theorems we are interested in. 

\begin{proposition}
\label{p03.1} $(\mathbf{ZF})$ Let $P$ be a topological property.
\begin{enumerate}
\item[(i)] If $P$ is countably multiplicative (i.e., if every countable product of spaces possessing $P$ has  $P$), then  $P$ is cuf multiplicative, i.e., every cuf product of spaces possessing $P$ has $P$.
\item[(ii)] If $P$ is countably additive (i.e., if every countable direct sum of spaces possessing $P$ has  $P$), then  $P$ is cuf additive, i.e., every cuf sum of spaces possessing $P$ has $P$.
\end{enumerate}
\end{proposition}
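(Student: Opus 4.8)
The plan is to reduce an arbitrary cuf product to an ordinary $\omega$-indexed product of \emph{finite} products, using only choice-free manipulations of product topologies. Fix a non-empty cuf set $J$ and a family $\{\mathbf{X}_j:j\in J\}$ of spaces each having $P$; by Definition~\ref{d1.5} we may write $J=\bigcup_{n\in\omega}F_n$ with every $F_n$ finite. Putting $G_n=F_n\setminus\bigcup_{m<n}F_m$, we obtain a \emph{disjoint} family $\{G_n:n\in\omega\}$ of finite sets with $\bigcup_{n\in\omega}G_n=J$. Since this is a partition of $J$, the map $\Phi$ sending $f\in\prod_{j\in J}X_j$ to the sequence whose $n$-th term is the restriction $f\restriction G_n\in\prod_{j\in G_n}X_j$ is a bijection onto $\prod_{n\in\omega}\bigl(\prod_{j\in G_n}X_j\bigr)$, with inverse $(g_n)_{n\in\omega}\mapsto\bigcup_{n\in\omega}g_n$. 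Checking continuity of $\Phi$ and of $\Phi^{-1}$ coordinatewise through the projections uses no form of choice, so $\Phi$ is a homeomorphism
\[
\prod_{j\in J}\mathbf{X}_j\;\cong\;\prod_{n\in\omega}\Bigl(\prod_{j\in G_n}\mathbf{X}_j\Bigr).
\]

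Because $P$ is countably multiplicative, it now suffices to show that each factor $\prod_{j\in G_n}\mathbf{X}_j$, being a finite product of spaces having $P$, again has $P$. This is the only point that needs care: $\mathbf{ZF}$ does not let us choose well-orderings of the finite pieces $G_n$ uniformly, so we cannot simply enumerate $J$ and apply countable multiplicativity directly, and for an \emph{arbitrary} property $P$ the passage from ``countable'' to ``finite'' must be justified. To do so, first observe that a one-point space has $P$, being the product of the empty family of spaces (which vacuously consists of spaces having $P$); consequently the product of any finite family of spaces having $P$ is homeomorphic to the $\omega$-indexed product obtained by adjoining countably many one-point factors, and hence has $P$. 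In particular $\prod_{j\in G_n}\mathbf{X}_j$ has $P$ for every $n$ (the case $G_n=\emptyset$ giving a one-point space), which proves (i).

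The proof of (ii) is the exact dual. With $J$ and the $G_n$ as above, associativity of topological direct sums (again a choice-free identification of underlying sets and topologies) gives $\bigoplus_{j\in J}\mathbf{X}_j\cong\bigoplus_{n\in\omega}\bigl(\bigoplus_{j\in G_n}\mathbf{X}_j\bigr)$, and one is reduced to showing that a finite direct sum of spaces having $P$ has $P$; here the role of the one-point space is taken over by the empty space, which has $P$ as the direct sum of the empty family, so a finite direct sum is handled by adjoining empty summands. Countable additivity of $P$ then finishes the argument. Thus the main obstacle is precisely the ``countable $\Rightarrow$ finite'' reduction in the last step, which is trivial for every concrete property considered in this paper (each is possessed by one-point and empty spaces) but deserves to be spelled out since $P$ is taken to be arbitrary.
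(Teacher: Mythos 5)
Your proposal is correct and follows essentially the same route as the paper's proof: partition the cuf index set into a disjoint countable family of finite pieces and invoke the (choice-free) associativity homeomorphism identifying the cuf product (resp.\ sum) with the countable product (resp.\ sum) of the finite subproducts (resp.\ subsums). The extra care you devote to showing that finite products and sums inherit $P$ (by adjoining one-point, resp.\ empty, factors) is harmless but not needed under the paper's conventions, since finite sets are countable by definition, so finite products and sums are already instances of the hypothesis.
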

\begin{proof}
Suppose that $\{J_n: n\in\omega\}$ is a disjoint family of non-empty finite sets. Let $J=\bigcup\limits_{n\in\omega}J_n$. For each $j\in J$, let $\langle X_j, \tau_j\rangle$ be a topological space. To complete the proof, we notice that the cuf product $\prod\limits_{j\in J}\langle X_j, \tau_j\rangle$ is homeomorphic with the countable product $\prod\limits_{n\in\omega}(\prod\limits_{j\in J_n}\langle X_j, \tau_j\rangle)$, while the direct cuf sum $\bigoplus\limits_{j\in J}\langle X_j, \tau_j\rangle$ is homeomorphic with the countable direct sum $\bigoplus\limits_{n\in\omega}(\bigoplus\limits_{j\in J_n}\langle X_j, \tau_j\rangle)$.
\end{proof}
 
From Theorem \ref{t02.3} and  Proposition \ref{p03.1} (or its proof), we immediately deduce the following:

\begin{theorem}
\label{c03.2}
$(\mathbf{ZF})$
\begin{enumerate} 
\item[(i)]  $\mathbf{CPM}_{le}$ iff all cuf products of metrizable spaces are metrizable iff all cuf sums of metrizable spaces are metrizable.
\item[(ii)] $\mathbf{CPQM}$ iff all cuf products of quasi-metrizable spaces are quasi-metri\-zable iff all cuf sums of quasi-metrizable spaces are quasi-metrizable.
\item[(iii)] $\mathbf{CPQMBi}$ iff all cuf products of quasi-metrizable bitopological spaces are quasi-metrizable iff all cuf sums of quasi-metrizable bitopological spaces are quasi-metrizable.
\item[(iv)] $\mathbf{CPM}(C,C)$ iff all cuf products of compact metric spaces are compact.
\item[(v)]$\mathbf{CPM}(C,S)$ iff all cuf products of compact metric spaces are separable.
\end{enumerate}
\end{theorem}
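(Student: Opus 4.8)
The plan is to derive each of the five equivalences from Theorem~\ref{t02.3} together with the homeomorphism exhibited in the proof of Proposition~\ref{p03.1}, and to dispose of one of the two implications in each ``iff'' at once via the observation that $\mathbb{N}$ is itself a cuf set. Indeed, $\mathbb{N}=\bigcup_{n\in\mathbb{N}}\{n\}$ is a countable union of finite sets, so every countable product is a particular cuf product and every countable sum is a particular cuf sum; hence, in each of (i)--(v), the statement asserting that all cuf products (respectively, all cuf sums) of the indicated spaces have the indicated property immediately yields the corresponding form $\mathbf{CPM}_{le}$, $\mathbf{CPQM}$, $\mathbf{CPQMBi}$, $\mathbf{CPM}(C,C)$ or $\mathbf{CPM}(C,S)$. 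Thus only the reverse implications require work, and these are where Theorem~\ref{t02.3} and Proposition~\ref{p03.1} come in.

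For (i)--(iii), write $P$ for metrizability, quasi-metrizability, or quasi-metrizability of bitopological spaces, respectively. First, by Theorem~\ref{t02.3}(i),(iii),(iv) the product form and the sum form of the relevant principle are equivalent, i.e. $\mathbf{CPM}_{le}\Leftrightarrow\mathbf{CSM}_{le}$, $\mathbf{CPQM}\Leftrightarrow\mathbf{CSQM}$ and $\mathbf{CPQMBi}\Leftrightarrow\mathbf{CSQMBi}$. Next, $P$ is both finitely multiplicative and finitely additive in $\mathbf{ZF}$: a finite product of (quasi)-metrizable (bitopological) spaces is (quasi)-metrizable via the maximum of the finitely many chosen (quasi)-metrics, and a finite sum is (quasi)-metrizable via the construction $(\ast)$ applied to finitely many chosen (quasi)-metrics, only finitely many selections being needed. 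Consequently, assuming the product form, $P$ is countably multiplicative, so by Proposition~\ref{p03.1}(i) it is cuf multiplicative; and assuming the (equivalent) sum form, $P$ is countably additive, so by Proposition~\ref{p03.1}(ii) it is cuf additive. This establishes (i)--(iii).

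For (iv) and (v), assume $\mathbf{CPM}(C,C)$ (respectively $\mathbf{CPM}(C,S)$), let $J$ be a cuf set, fix a decomposition $J=\bigcup_{n\in\omega}J_n$ into pairwise disjoint finite sets, and let $\{\langle X_j,d_j\rangle:j\in J\}$ be a family of compact metric spaces. For each $n\in\omega$, the finite family $\{d_j:j\in J_n\}$ canonically determines a product metric $e_n$ on $Y_n:=\prod_{j\in J_n}X_j$ whose induced topology $\tau(e_n)$ is the product topology, and $\langle Y_n,\tau(e_n)\rangle$ is compact because finite products of compact spaces are compact in $\mathbf{ZF}$; so $\{\langle Y_n,e_n\rangle:n\in\omega\}$ is a genuine family of compact metric spaces, defined without any choice. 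Exactly as in the proof of Proposition~\ref{p03.1}, the cuf product $\prod_{j\in J}\langle X_j,\tau(d_j)\rangle$ is homeomorphic to $\prod_{n\in\omega}\langle Y_n,\tau(e_n)\rangle$, which is compact by $\mathbf{CPM}(C,C)$ (respectively separable by $\mathbf{CPM}(C,S)$); since compactness and separability are topological properties, the cuf product is compact (respectively separable), which gives (iv) and (v). There is no real obstacle here: the argument is essentially immediate from the cited results, and the only point that needs a word of care is that each auxiliary space $Y_n=\prod_{j\in J_n}X_j$ be shown to possess the relevant property \emph{in} $\mathbf{ZF}$ — but this is exactly where finiteness of $J_n$ is used, since then finitely many selections among the given (quasi)-metrics suffice and the finite-product (and finite-sum) preservation theorems for metrizability, quasi-metrizability and compactness are all provable in $\mathbf{ZF}$.
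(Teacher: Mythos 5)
Your proposal is correct and follows essentially the same route as the paper, which simply deduces the theorem from Theorem~\ref{t02.3} and Proposition~\ref{p03.1} (the observation that $\omega$ is a cuf set gives one direction, and the decomposition $J=\bigcup_{n\in\omega}J_n$ with the homeomorphism onto a countable product of finite products gives the other). Your added care about finite multiplicativity and additivity being provable in $\mathbf{ZF}$ (only finitely many selections of (quasi)-metrics being needed) is exactly the implicit step the paper leaves to the reader.
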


Using Theorem \ref{t02.6} and arguing in much the same way, as in the proof to Theorem \ref{t02.3}, we can prove the following proposition:

\begin{proposition}
\label{p03.3}
let $J$ be a non-empty cuf set and let $\{\langle X_j, \tau_j\rangle: j\in J\}$ be a family of pairwise disjoint topological spaces such that $\prod\limits_{j\in J}X_j\neq\emptyset$. Then $\bigoplus\limits_{j\in J}\langle X_j, \tau_j\rangle$ is (quasi)-metrizable if and only if $\prod\limits_{j\in J}\langle X_j, \tau_j\rangle$ is (quasi)-metrizable.
\end{proposition}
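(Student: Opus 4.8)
The plan is to establish both implications by producing, in each case and without any appeal to a choice principle, a single family $\{d_j:j\in J\}$ of (quasi)-metrics with $\tau(d_j)=\tau_j$, and then to invoke Theorem \ref{t02.6} (to handle the product) or the sum construction $(\ast)$ of Subsection \ref{s1.1} (to handle the sum). Write $\mathbf{S}=\bigoplus_{j\in J}\langle X_j,\tau_j\rangle$ and $\mathbf{P}=\prod_{j\in J}\langle X_j,\tau_j\rangle$. Note that $\prod_{j\in J}X_j\neq\emptyset$ forces every $X_j$ to be non-empty, and fix once and for all a point $z\in\prod_{j\in J}X_j$.

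For the direction ``$\mathbf{S}$ (quasi)-metrizable $\Rightarrow$ $\mathbf{P}$ (quasi)-metrizable'', I would fix a (quasi)-metric $\rho$ on $\bigcup_{j\in J}X_j$ with $\tau(\rho)$ equal to the topology of $\mathbf{S}$. Each $X_j$ is a clopen subspace of $\mathbf{S}$, so the subspace topology it inherits is $\tau_j$, and that topology is induced by $d_j:=\rho\mid_{X_j\times X_j}$. Hence $\{\langle X_j,d_j\rangle:j\in J\}$ is a family of (quasi)-metric spaces, obtained uniformly from $\rho$ with no use of choice, satisfying $\tau(d_j)=\tau_j$ for every $j\in J$. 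Since $J$ is a cuf set, Theorem \ref{t02.6} then gives that $\prod_{j\in J}\langle X_j,\tau(d_j)\rangle=\mathbf{P}$ is (quasi)-metrizable.

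For the converse, I would fix a (quasi)-metric $D$ on $\prod_{j\in J}X_j$ inducing the product topology, and for each $j\in J$ define the slice map $e_j:X_j\to\prod_{k\in J}X_k$ by $e_j(x)(j)=x$ and $e_j(x)(k)=z(k)$ for $k\in J\setminus\{j\}$. A routine verification shows that $e_j$ is a homeomorphism of $\langle X_j,\tau_j\rangle$ onto the subspace $e_j(X_j)$ of $\mathbf{P}$, with inverse $\pi_j\mid_{e_j(X_j)}$; consequently $d_j(x,y):=D(e_j(x),e_j(y))$ is a (quasi)-metric on $X_j$ with $\tau(d_j)=\tau_j$. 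The family $\{d_j:j\in J\}$ depends uniformly only on $D$ and the single fixed point $z$, so it is a genuine set; feeding it into the construction $(\ast)$ of Subsection \ref{s1.1} produces a (quasi)-metric $d$ on $\bigcup_{j\in J}X_j$ whose induced topology is that of $\bigoplus_{j\in J}\langle X_j,\tau(d_j)\rangle=\mathbf{S}$. Thus $\mathbf{S}$ is (quasi)-metrizable.

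The only delicate point, and the reason the hypothesis $\prod_{j\in J}X_j\neq\emptyset$ is invoked, is that each family of (quasi)-metrics used must be a legitimate set rather than the output of an unavailable simultaneous choice over $J$: in the first direction the restrictions of the single $\rho$ supply this automatically, while in the second it is precisely the fixed point $z$ that makes all the embeddings $e_j$ available uniformly. Once this is arranged, Theorem \ref{t02.6} and the construction $(\ast)$ carry all the weight, and the argument runs parallel to the proof of Theorem \ref{t02.3}.
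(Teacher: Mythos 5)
Your proof is correct and follows essentially the same route the paper intends: the paper's proof is just the remark that one argues as in Theorem \ref{t02.3} using Theorem \ref{t02.6}, i.e.\ one extracts a choice-free family $\{d_j: j\in J\}$ of (quasi)-metrics from the given (quasi)-metric on the sum (by restriction) or on the product (by pulling back along the slice embeddings through the fixed point $z$), and then feeds it into the product formula or the construction $(\ast)$. Your write-up simply makes explicit the details the paper leaves to the reader, including the role of the hypothesis $\prod_{j\in J}X_j\neq\emptyset$.
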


In view of Proposition \ref{p03.3}, searching for conditions under which countable products and countable sums are (quasi)-metrizable, we can limit our attention to countable products.  

\subsection{Cuf products of cofinite cuf spaces and one-point Hausdorff compactifications of infinite discrete cuf spaces}
\label{s3.3}

\begin{definition}
\label{d03.4} 
Let $X$ be an infinite set, $\infty\notin X$ and $X(\infty)=X\cup\{\infty\}$. Then $\mathbf{X}(\infty)=\langle X(\infty), \tau\rangle$ where $$\tau=\mathcal{P}(X)\cup\{U\subseteq X(\infty): \infty\in U\text{ and } X\setminus U\in [X]^{<\omega}\}.$$  
\end{definition}

\begin{remark}
\label{r03.5} 
Clearly, for an infinite set $X$, the space $\mathbf{X}(\infty)$ is the one-point Hausdorff compactification of the discrete space $\langle X, \mathcal{P}(X)\rangle$. If $X=Y_n$, then we denote $\mathbf{X}(\infty)$ by $\mathbf{Y}_n(\infty)$. 
\end{remark}

In what follows, we use the notions of a pseudobase and a countable pseudocharacter, given in Definition \ref{d1.3}.

\begin{definition}
\label{d3.6} 
We say that a space $\mathbf{X}$ is of cuf pseudocharacter if, for every point of $\mathbf{X}$, there exists (in $\mathbf{X}$) a pseudobase $\mathcal{U}$ at $x$ such that $\mathcal{U}$ is a cuf set.
\end{definition}

\begin{remark}
\label{r3.7}
(i) Of course, every first-countable $T_1$-space is of countable pseudocharacter. Every space of countable pseudo\-character is of cuf pseudo\-character. Every (quasi)-metrizable space is first-countable.\smallskip

(ii) A known result of $\mathbf{ZF}+\mathbf{CAC}$ is that every compact Hausdorff space of countable pseudocharacter is first-countable. It occurs that a compact Hausdorff space of countable pseudocharacter may fail to be first-countable in a model of $\mathbf{ZF}$. Namely, in \cite{kt1} and \cite{tach3}, a model $\mathcal{M}$ of $\mathbf{ZF}$ was shown in which there exists a denumerable compact Hausdorff space $\mathbf{X}$ which does not have a denumerable family of pairwise disjoint non-empty open sets. Clearly, this space $\mathbf{X}$ is not first-countable but it is of countable pseudocharacter.
\end{remark}

\begin{proposition}
\label{p03.6} $(\mathbf{ZF})$ Let $X$ be an infinite set and let $\mathbf{X}=\langle X, \mathcal{P}(X)\rangle$. The following are equivalent:
\begin{enumerate}
\item[(i)] the one-point Hausdorff compactification of $\mathbf{X}$ is (non-Archimedeanly) metrizable;
\item[(ii)] the one-point Hausdorff compactification of $\mathbf{X}$ is of cuf pseudocharacter;
\item[(iii)] $X$ is a cuf set.
\end{enumerate}
\end{proposition}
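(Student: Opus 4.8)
The plan is to prove the cycle $(iii)\Rightarrow(i)\Rightarrow(ii)\Rightarrow(iii)$, establishing $(iii)\Rightarrow(i)$ in the strong form (non-Archimedean metrizability), so that both readings of (i) are covered at once. Recall from Remark \ref{r03.5} that the space $\mathbf{X}(\infty)$ of Definition \ref{d03.4} is precisely the one-point Hausdorff compactification of the discrete space $\mathbf{X}$.

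For $(iii)\Rightarrow(i)$: write $X=\bigcup_{n\in\omega}F_n$ with each $F_n$ finite; put $A_n=F_n\setminus\bigcup_{m<n}F_m$, observe that $X$ being infinite forces $\{n\in\omega:A_n\neq\emptyset\}$ to be infinite, and re-enumerate this set (using that every subset of $\omega$ is order-isomorphic to an initial segment of $\omega$, a $\mathbf{ZF}$-fact requiring no choice) to obtain a disjoint family $\{B_k:k\in\mathbb{N}\}$ of non-empty finite sets with $\bigcup_{k}B_k=X$. Feed the schema $(\ast\ast)$ of Subsection \ref{s2.3} with $\rho_k$ the discrete metric on $B_k$; its underlying set is exactly $X(\infty)$, and Proposition \ref{p02.16}(ii) already gives that the resulting $d$ is a metric. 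Two things remain. First, that $d$ is non-Archimedean: writing $\ell(x)=k$ for $x\in B_k$ and $\ell(\infty)=+\infty$, one has $d(x,y)=1/\min\{\ell(x),\ell(y)\}$ whenever $x,y$ lie in distinct blocks or one of them is $\infty$, while on each $B_k$ the metric is an ultrametric bounded by $1/k$, so the ultrametric inequality reduces to $\min\{\ell(x),\ell(y)\}\ge\min\{\ell(x),\ell(y),\ell(z)\}$. Second, that $\tau(d)=\tau$: since $d(x,y)\ge 1/\ell(x)$ for every $y\neq x$ in $X(\infty)$, each $x\in X$ is isolated (as in $\tau$); and $B_d(\infty,1/2^{m})=\{\infty\}\cup\bigcup_{k>2^{m}}B_k$, whose complement in $X$ is a finite union of finite sets, hence finite, while conversely any cofinite $U\ni\infty$ contains such a ball because a finite subset of $\bigcup_k B_k$ meets only finitely many blocks. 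Thus $\tau(d)=\tau$ and $\mathbf{X}(\infty)$ is non-Archimedeanly metrizable.

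For $(i)\Rightarrow(ii)$ there is nothing to do: by Remark \ref{r3.7}(i), a (non-Archimedeanly) metrizable space is a first-countable $T_1$-space, hence of countable pseudocharacter, hence of cuf pseudocharacter. For $(ii)\Rightarrow(iii)$, apply the cuf-pseudocharacter hypothesis at the point $\infty$ to get a pseudobase $\mathcal{U}$ at $\infty$ with $\{\infty\}=\bigcap\mathcal{U}$ and $\mathcal{U}=\bigcup_{n\in\omega}\mathcal{U}_n$, each $\mathcal{U}_n$ finite; every $U\in\mathcal{U}$ is open and contains $\infty$, so $X\setminus U$ is finite, and from $\bigcap\mathcal{U}=\{\infty\}$ we obtain $X=\bigcup_{n\in\omega}G_n$, where $G_n=\bigcup_{U\in\mathcal{U}_n}(X\setminus U)$ is a finite union of finite sets and hence finite; so $X$ is a cuf set.

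The only implication with real content is $(iii)\Rightarrow(i)$, and there the one genuine point is matching $\tau(d)$ with $\tau$; the small trap to avoid is that $\max\{1/n,1/m\}=1/\min\{n,m\}$, so the ``levels'' in the metric combine through $\min$, not $\max$ — with that noted, the non-Archimedean inequality and the topology computation are routine.
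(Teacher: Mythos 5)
Your proposal is correct and follows essentially the same route as the paper: the implications $(i)\Rightarrow(ii)$ and $(ii)\Rightarrow(iii)$ are argued identically, and your $(iii)\Rightarrow(i)$ produces literally the same non-Archimedean metric as the paper's (your $1/\min\{\ell(x),\ell(y)\}$ is the paper's $\max\{1/n(x),1/n(y)\}$ with the increasing sets $A_n$ replaced by their disjointified blocks), merely packaged through the schema $(\ast\ast)$ of Subsection \ref{s2.3} instead of being written out directly. The verification that $\tau(d)$ is the topology of $\mathbf{X}(\infty)$ and the ultrametric inequality are handled correctly.
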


\begin{proof} In view of Remark \ref{r3.7}(i), the implications $(i)\rightarrow (ii)$ is obvious. Let $\infty$ be an element such that $\infty\notin X$. To show that $(ii)$ implies $(iii)$, suppose that, in $\mathbf{X}(\infty)$, $\mathcal{U}$ is a pseudobase at $\infty$ such that $\mathcal{U}$ is a cuf set. Let $\mathcal{U}=\bigcup\limits_{n\in\mathbb{N}}\mathcal{U}_n$ where, for every $n\in\mathbb{N}$, the family $\mathcal{U}_n$ is finite.  Then, for every $n\in\mathbb{N}$, the set $F_n=\bigcup\limits_{U\in\mathcal{U}_n}(X\setminus U)$ is finite. Since $X=\bigcup\limits_{n\in\mathbb{N}}F_n$, we deduce that $X$ is a cuf set. Hence $(ii)$ implies $(iii)$.

Now, suppose that $(iii)$ holds. Let $\{A_n: n\in\omega\}$ be a collection of non-empty finite sets such that $X=\bigcup\limits_{n\in\mathbb{N}}A_n$ and $A_{n}\subseteq A_{n+1}$ for each $n\in\mathbb{N}$. For $x\in X$, let $n(x)=\min\{n\in\omega: x\in A_{n}\}$. For $x,y\in X\cup\{\infty\}$, we define
\[
d(x,y)= d(y,x)=\left\{ 
\begin{array}{c}
0 \text{ if } x=y, \\ 
\max\{\frac{1}{n(x)}, \frac{1}{n(y)}\}\text{ if } x,y\in X \text{ and } x\neq y,\\
\frac{1}{n(x)}\text{ if } x\in X\text{ and }y=\infty.
\end{array}
\right. 
\]
Then $d$ is a non-Archimedean metric on $X\cup\{\infty\}$ which induces the topology of $\mathbf{X}(\infty)$. Hence $(iv)$ implies $(i)$.
\end{proof}

\begin{lemma}
\label{l03.7} $(\mathbf{ZF})$  
Let $\{A_n: n\in\mathbb{N}\}$ be a collection of finite sets such $A_n\subseteq A_{n+1}$ for each $n\in\mathbb{N}$. Suppose that the set $X=\bigcup\limits_{n\in\mathbb{N}}A_n$ is non-empty. For $x\in X$, let $n(x)=\min\{n\in\mathbb{N}: x\in A_n\}$. For $x,y\in X$ with $x\neq y$, let $\rho(x,y)$ be defined as follows: 
\[
\rho(x,y)= \left\{ 
\begin{array}{c}
0 \text{ if } x=y,\\ 
\frac{1}{n(y)}\text{ if }  x\neq y.
\end{array}
\right.
\]
 Then $\rho$  is a non-Archimedean quasi-metric on $X$ such that $\rho$ induces the cofinite topology of $X$.
\end{lemma}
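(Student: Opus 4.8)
The plan is to verify directly that $\rho$ satisfies the defining conditions of a non-Archimedean quasi-metric, and then to compute the $\rho$-balls explicitly and read off that $\tau(\rho)$ is the cofinite topology of $X$.

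First I would check the quasi-metric axioms. Non-negativity and $\rho(x,x)=0$ are built into the definition, and if $x\neq y$ then $\rho(x,y)=\frac{1}{n(y)}>0$, so $\rho(x,y)=0$ implies $x=y$. For the non-Archimedean inequality $\rho(x,z)\leq\max\{\rho(x,y),\rho(y,z)\}$, the key observation is that $\rho(u,v)$ depends only on whether $u=v$ and, when $u\neq v$, only on $n(v)$. If $x=z$, the left-hand side is $0$. If $x\neq z$, then $\rho(x,z)=\frac{1}{n(z)}$; if $y=z$, then $x\neq y$ and $\rho(x,y)=\frac{1}{n(y)}=\frac{1}{n(z)}$, while if $y\neq z$, then $\rho(y,z)=\frac{1}{n(z)}$, so in either case $\max\{\rho(x,y),\rho(y,z)\}\geq\frac{1}{n(z)}$. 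Since the strong triangle inequality implies the ordinary one, $\rho$ is a non-Archimedean quasi-metric on $X$.

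Next I would identify $\tau(\rho)$. Because the $A_n$ increase and have union $X$, we have $z\in A_m$ if and only if $m\geq n(z)$, for each $z\in X$; hence, for every $n\in\omega$,
\[
B_{\rho}(x,\tfrac{1}{2^{n}})=\{x\}\cup\{y\in X: n(y)>2^{n}\}=\{x\}\cup(X\setminus A_{2^{n}}),
\]
whose complement $A_{2^{n}}\setminus\{x\}$ is finite; thus every such ball is a cofinite set containing $x$. Consequently, any non-empty $V\in\tau(\rho)$ contains a cofinite set around each of its points, so $V$ itself is cofinite. Conversely, given a cofinite $U\subseteq X$ with $x\in U$, the finite set $X\setminus U$ is contained in some $A_m$ (take $m$ to be the largest of the finitely many numbers $n(z)$ for $z\in X\setminus U$, or $m=1$ if $X\setminus U=\emptyset$, using that the $A_n$ increase); picking $n\in\omega$ with $2^{n}\geq m$ then gives $X\setminus A_{2^{n}}\subseteq U$, so $B_{\rho}(x,\tfrac{1}{2^{n}})\subseteq U$. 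Hence every cofinite subset of $X$ belongs to $\tau(\rho)$, and, together with $\emptyset\in\tau(\rho)$, this shows that $\tau(\rho)$ is exactly the cofinite topology of $X$.

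I do not expect a genuine obstacle here; the argument is entirely elementary. The only points requiring a little care are the two-directional correspondence between balls of radius $\frac{1}{2^{n}}$ and cofinite neighborhoods --- which rests solely on the fact that an increasing union of sets absorbs each of its finite subsets into a single member --- and the degenerate cases ($x=y$, or $X$ finite, where the cofinite topology is discrete and the small balls shrink to singletons), all of which are handled by the displayed computation.
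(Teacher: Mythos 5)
Your proposal is correct and follows essentially the same route as the paper's own proof: verify the strong triangle inequality directly from the fact that $\rho(x,z)$ depends only on $n(z)$, observe that every $\rho$-ball is cofinite (its complement sits inside some $A_m$), and conversely that every cofinite set containing $x$ absorbs a ball around $x$ because the increasing sets $A_n$ eventually contain any finite subset of $X$. Your explicit computation $B_{\rho}(x,\tfrac{1}{2^{n}})=\{x\}\cup(X\setminus A_{2^{n}})$ is just a slightly more detailed rendering of the same argument.
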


\begin{proof} The proof is similar to that of Theorem 2.7 in \cite{ew}. We include it for readers' convenience. We notice that if $x,y,z\in X$, then $\rho(x, y)\leq\max\{\rho(x, z), \rho(z, y)\}$, so $\rho$ is a non-Archimedean quasi-metric. Let $\tau$ be the cofinite topology on $X$.  Let $x\in X$ and $n\in\mathbb{N}$.  If $n\in\mathbb{N}$ and $y\in X\setminus B_{\rho}(x,\frac{1}{n+1})$, then $\rho(x,y)\ge\frac{1}{n+1}$, so $n+1\leq n(y)$ and this implies that $X\setminus B_{\rho}(x,\frac{1}{n+1})\subseteq A_n$. Hence $B_{\rho}(x, \frac{1}{n+1})\in \tau$. On the other hand, given $U\in\tau$, we can fix $m\in\mathbb{N}$ such that $X\setminus U\subseteq A_m$. Then, for $x\in U$, we have $B_{\rho}(x,\frac{1}{m})\subseteq U$. 
\end{proof}

\begin{theorem}
\label{t03.8}
$(\mathbf{ZF})$
 Let $J$ be a non-empty cuf set and let $\{\langle X_j, \tau_j\rangle: j\in J\}$ be a collection of cofinite spaces. Then the product $\mathbf{X}=\prod\limits_{j\in J}\langle X_j, \tau_j\rangle$ is quasi-metrizable iff it is of cuf pseudocharacter.
\end{theorem}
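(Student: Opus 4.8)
The plan is to treat the forward implication in a couple of lines and to put the real work into the converse, whose content is that cuf pseudocharacter of the product can be converted, \emph{without any appeal to choice}, into a single family of quasi-metrics on the factor spaces, after which Theorem~\ref{t02.6} finishes the job. First I would discard the degenerate case: if $\prod_{j\in J}X_j=\emptyset$, then $\mathbf{X}$ is the empty space, which is both quasi-metrizable and (vacuously) of cuf pseudocharacter, so from now on I assume $\prod_{j\in J}X_j\neq\emptyset$ and fix once and for all a point $x^{*}\in\prod_{j\in J}X_j$. For ``$\mathbf{X}$ quasi-metrizable $\Rightarrow$ $\mathbf{X}$ of cuf pseudocharacter'' I would just note that each cofinite space $\langle X_j,\tau_j\rangle$ is $T_1$, hence so is the product $\mathbf{X}$; a quasi-metrizable space is first-countable, a first-countable $T_1$-space is of countable pseudocharacter, and a space of countable pseudocharacter is of cuf pseudocharacter (all by Remark~\ref{r3.7}(i)).

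For the converse, suppose $\mathbf{X}$ is of cuf pseudocharacter and fix a pseudobase $\mathcal{U}$ at $x^{*}$ that is a cuf set; replacing the terms of a witnessing countable decomposition by their partial unions, I may assume $\mathcal{U}=\bigcup_{n\in\mathbb{N}}\mathcal{W}_n$ with each $\mathcal{W}_n$ finite and $\mathcal{W}_n\subseteq\mathcal{W}_{n+1}$. For each $j\in J$ let $s_j\colon X_j\to X$ be the slice map sending $t\in X_j$ to the point that agrees with $x^{*}$ off the $j$-th coordinate and equals $t$ there; checking preimages of subbasic open sets shows $s_j$ is continuous, so for every $U\in\mathcal{U}$ the set $W_{U,j}:=s_j^{-1}(U)$ is a nonempty open subset of $\langle X_j,\tau_j\rangle$ (it contains $x^{*}(j)$), whence $X_j\setminus W_{U,j}$ is finite. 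Since $\mathcal{U}$ is a pseudobase at $x^{*}$ and $s_j(t)\neq x^{*}$ whenever $t\neq x^{*}(j)$, one gets $\bigcap_{U\in\mathcal{U}}W_{U,j}=\{x^{*}(j)\}$. Putting $F_{j,n}:=\bigcup_{U\in\mathcal{W}_n}(X_j\setminus W_{U,j})$, the sets $F_{j,n}$ are finite, increase with $n$, and satisfy $\bigcup_{n\in\mathbb{N}}F_{j,n}=X_j\setminus\{x^{*}(j)\}$. The point I would stress is that the whole family $\{F_{j,n}:j\in J,\ n\in\mathbb{N}\}$ is defined uniformly from the fixed data $(\mathcal{W}_n)_{n\in\mathbb{N}}$ and $x^{*}$, with no choices made along the way.

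To finish, set $A_{j,n}:=F_{j,n}\cup\{x^{*}(j)\}$, so that $(A_{j,n})_{n\in\mathbb{N}}$ is an increasing sequence of finite sets with union $X_j$; Lemma~\ref{l03.7} then yields, for each $j$, a (non-Archimedean) quasi-metric $\rho_j$ on $X_j$ with $\tau(\rho_j)=\tau_j$. As $j\mapsto\rho_j$ is again choice-free, $\{\langle X_j,\rho_j\rangle:j\in J\}$ is a genuine family of quasi-metric spaces indexed by the cuf set $J$, and Theorem~\ref{t02.6} gives that $\mathbf{X}=\prod_{j\in J}\langle X_j,\tau(\rho_j)\rangle$ is quasi-metrizable.

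The only delicate step — and the reason the naive argument collapses in $\mathbf{ZF}$ — is the uniformity obtained in the second paragraph: deducing merely that each $X_j$ is a cuf set would still leave one choosing, separately for each of the cuf-many indices $j$, an exhaustion of $X_j$ by increasing finite sets, which is a genuine application of choice. Extracting all those exhaustions at once from one pseudobase of the product, via the slice maps $s_j$, is what makes the proof go through; I expect this bookkeeping to be the main thing to get right, while the continuity of $s_j$, the elementary facts about cofinite topologies, and the appeals to Lemma~\ref{l03.7} and Theorem~\ref{t02.6} are routine.
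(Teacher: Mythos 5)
Your proposal is correct and follows essentially the same route as the paper: the sets $X_j\setminus W_{U,j}=s_j^{-1}(X\setminus U)$ are exactly the paper's sets $E(j,U)$, and both arguments then assemble the uniform exhaustions $\{A_{j,n}\}$ choice-freely from the single cuf pseudobase at the fixed point before invoking Lemma~\ref{l03.7} and Theorem~\ref{t02.6}. Your explicit replacement of the $\mathcal{W}_n$ by partial unions and the adjunction of $x^{*}(j)$ to meet the hypotheses of Lemma~\ref{l03.7} are small tidy-ups the paper leaves implicit.
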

\begin{proof} By virtue of Remark \ref{r3.7}(i), it suffices to prove that if $\mathbf{X}$ is of cuf pseudocharacter, then $\mathbf{X}$ is quasi-metrizable. Let $X=\prod\limits_{j\in J}X_j$. If $X=\emptyset$, then $\mathbf{X}$ is quasi-metrizable.  Suppose that $X\neq\emptyset$. Then we can fix a point $z\in X$. Suppose that, in $\mathbf{X}$, $\mathcal{U}$ is a pseudobase at $z$ such that $\mathcal{U}$ is a cuf set. Let $\mathcal{U}=\bigcup\limits_{n\in\mathbb{N}}\mathcal{U}_n$ where, for every $n\in\mathbb{N}$, the family $\mathcal{U}_n$ is finite. Consider any $i\in J$ and  $U\in\mathcal{U}$. Let $E(i,U)$ be the set of all $t\in X_i$ such that there exists $y\in X\setminus U$ such that $y(i)=t$ and $y(j)=z(j)$ for every $j\in J\setminus\{i\}$. Then, the set $E(i, U)$ is closed in $\langle X_i, \tau_i\rangle$ and $E(i, U)\neq X_i$,  so $E(i,U)$ is a finite subset of $X_i$. Hence, for every $n\in\mathbb{N}$ and every $i\in J$, the set $A_{i,n}=\bigcup\limits_{U\in\mathcal{U}_n}E(i, U)$ is finite. In this way, we have defined a collection $\{A_{j,n}: j\in J, n\in\mathbb{N}\}$ of finite sets such that $X_j\setminus\{z(j)\}=\bigcup\limits_{n\in\mathbb{N}}A_{j,n}$ for each $j\in J$. Now, we can use Lemma \ref{l03.7} to define a collection $\{\rho_j: j\in J\}$ of quasi-metrics such that $\tau_j=\tau(\rho_j)$ for each $j\in J$. By Theorem \ref{t02.6}, $\mathbf{X}$ is quasi-metrizable.
\end{proof}

From Theorem \ref{t03.8} and Proposition \ref{p03.3}, we can immediately deduce the following corollary:

\begin{corollary}
\label{c03.9}
$(\mathbf{ZF})$  Let $J$ be a non-empty cuf set and let $\{\langle X_j, \tau_j\rangle: j\in J\}$ be a family of pairwise disjoint cofinite spaces such that $\prod\limits_{j\in J}X_j\neq\emptyset$. Then $\bigoplus\limits_{j\in J}\langle X_j, \tau_j\rangle$ is quasi-metrizable iff $\prod\limits_{j\in J}\langle X_j, \tau_j\rangle$ is of cuf pseudocharacter.
 \end{corollary}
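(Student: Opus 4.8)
The plan is to derive the statement by chaining the two results that immediately precede it, namely Proposition~\ref{p03.3} and Theorem~\ref{t03.8}. First I would check that the hypotheses of Proposition~\ref{p03.3} are met: $J$ is a non-empty cuf set, the family $\{\langle X_j,\tau_j\rangle: j\in J\}$ is pairwise disjoint, and $\prod\limits_{j\in J}X_j\neq\emptyset$, all by assumption. Invoking the ``quasi'' case of Proposition~\ref{p03.3} then gives that $\bigoplus\limits_{j\in J}\langle X_j,\tau_j\rangle$ is quasi-metrizable if and only if $\prod\limits_{j\in J}\langle X_j,\tau_j\rangle$ is quasi-metrizable.

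Second, I would apply Theorem~\ref{t03.8} to the same family: since $J$ is a non-empty cuf set and each $\langle X_j,\tau_j\rangle$ is a cofinite space, the hypotheses of that theorem hold verbatim, and it yields that $\prod\limits_{j\in J}\langle X_j,\tau_j\rangle$ is quasi-metrizable if and only if it is of cuf pseudocharacter. Composing these two biconditionals produces exactly the asserted equivalence: $\bigoplus\limits_{j\in J}\langle X_j,\tau_j\rangle$ is quasi-metrizable iff $\prod\limits_{j\in J}\langle X_j,\tau_j\rangle$ is of cuf pseudocharacter.

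The only point that deserves attention — and it is essentially the entire substance of the argument — is confirming that the two cited results can be applied together in $\mathbf{ZF}$ without smuggling in a choice principle: the assumption $\prod\limits_{j\in J}X_j\neq\emptyset$ is precisely what Proposition~\ref{p03.3} requires for the passage between the sum and the product, and it is also what furnishes the distinguished point $z$ used inside the proof of Theorem~\ref{t03.8}. Since no further hypothesis is introduced at either step, I expect no real obstacle; the statement is a formal corollary and the proof is two lines. (One could note in passing that the right-hand condition is phrased in terms of the product, not the sum, because it is the product whose cuf pseudocharacter Theorem~\ref{t03.8} characterizes.)
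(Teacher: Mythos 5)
Your proposal is correct and is exactly the paper's argument: the corollary is stated immediately after the sentence ``From Theorem \ref{t03.8} and Proposition \ref{p03.3}, we can immediately deduce the following corollary,'' and your chaining of the two biconditionals is precisely that deduction. Nothing further is needed.
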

 
 \begin{theorem}
\label{t03.10}
$(\mathbf{ZF})$ Let $J$ be a non-empty cuf set and let $\{\mathbf{Y}_j: j\in J\}$ be a family of one-point Hausdorff compactifications of infinite discrete spaces. Then the product $\mathbf{Y}=\prod\limits_{j\in J}\mathbf{Y}_j$ is metrizable iff it is quasi-metrizable iff it is of cuf pseudocharacter.
\end{theorem}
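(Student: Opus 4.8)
The plan is to prove the three conditions equivalent by establishing the cycle (metrizable) $\Rightarrow$ (quasi-metrizable) $\Rightarrow$ (cuf pseudocharacter) $\Rightarrow$ (metrizable). The first implication is immediate since every metric is a quasi-metric. For the second, observe that $\mathbf{Y}$ is a product of Hausdorff, hence $T_1$, spaces, so $\mathbf{Y}$ is $T_1$; thus, by Remark \ref{r3.7}(i), if $\mathbf{Y}$ is quasi-metrizable then it is first-countable, hence of countable pseudocharacter, hence of cuf pseudocharacter. All the content is in the implication (cuf pseudocharacter) $\Rightarrow$ (metrizable), and the idea is to manufacture --- without using any choice principle --- a family $\{d_j:j\in J\}$ of (non-Archimedean) metrics such that $\tau(d_j)$ is the topology of $\mathbf{Y}_j$ for each $j$, and then to apply Theorem \ref{t02.6}, which turns such a family into a metric on the cuf product $\mathbf{Y}$.

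So, assume $\mathbf{Y}$ is of cuf pseudocharacter. For each $j\in J$ write the underlying set of $\mathbf{Y}_j$ as $Y_j=X_j\cup\{\infty_j\}$, where $X_j$ is the (infinite) set of isolated points of $\mathbf{Y}_j$ and $\infty_j$ is the unique non-isolated point of $\mathbf{Y}_j$; this description is canonical precisely because $X_j$ is infinite, so that every neighbourhood of $\infty_j$ in $Y_j$ has finite complement in $X_j$. Since $\{\{\infty_j\}:j\in J\}$ is a family of singletons, the point $z\in\prod_{j\in J}Y_j$ given by $z(j)=\infty_j$ is well-defined, so in particular $\mathbf{Y}\neq\emptyset$. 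Applying the cuf pseudocharacter hypothesis at $z$, fix a pseudobase $\mathcal{U}=\bigcup_{n\in\mathbb{N}}\mathcal{U}_n$ at $z$ with each $\mathcal{U}_n$ finite; thus every $U\in\mathcal{U}$ is open, $z\in U$, and $\bigcap\mathcal{U}=\{z\}$.

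The heart of the proof --- and the step I expect to be the main obstacle, since it is where one must be careful to remain inside $\mathbf{ZF}$ --- is to read off from $\mathcal{U}$ a \emph{uniform} presentation of every $X_j$ as a countable union of finite sets. For $i\in J$ and $U\in\mathcal{U}$, let $E(i,U)$ be the set of all $t\in X_i$ for which the point $y_{i,t}\in\prod_{j\in J}Y_j$ defined by $y_{i,t}(i)=t$ and $y_{i,t}(k)=z(k)$ for $k\neq i$ satisfies $y_{i,t}\notin U$. Each $E(i,U)$ is finite: since $z\in U$ and $U$ is open in the product, there is a basic open set $V=\bigcap_{k\in F}\pi_k^{-1}(W_k)$ with $z\in V\subseteq U$ and $F\in[J]^{<\omega}$; if $i\notin F$ then $y_{i,t}\in V\subseteq U$ for every $t$, so $E(i,U)=\emptyset$, while if $i\in F$ then $y_{i,t}\in V$ whenever $t\in W_i$, hence $E(i,U)\subseteq X_i\setminus W_i$, which is finite because $z(i)=\infty_i\in W_i$ and $W_i$ is open in $Y_i$. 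Now put $A_{i,n}=\bigcup_{U\in\mathcal{U}_n}E(i,U)$; each $A_{i,n}$ is a finite union of finite sets, hence finite, and $X_i=\bigcup_{n\in\mathbb{N}}A_{i,n}$, because if $t\in X_i$ then $y_{i,t}\neq z$ (as $y_{i,t}(i)=t\in X_i$ but $z(i)=\infty_i\notin X_i$), so $\bigcap\mathcal{U}=\{z\}$ yields some $U\in\mathcal{U}$ with $y_{i,t}\notin U$, i.e. $t\in E(i,U)\subseteq A_{i,n}$ for the $n$ with $U\in\mathcal{U}_n$. Crucially, the whole family $\{A_{i,n}:i\in J,\ n\in\mathbb{N}\}$ is given by an explicit formula in $\mathcal{U}$ and $z$, so no choice is invoked.

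To finish, replace $A_{i,n}$ by $A'_{i,n}=\bigcup_{m\le n}A_{i,m}$, so that for each $j\in J$ the sequence $(A'_{j,n})_{n\in\mathbb{N}}$ is an increasing sequence of finite sets with union $X_j$. Feeding $(A'_{j,n})_n$ into the construction in the proof of Proposition \ref{p03.6} (with $\infty_j$ playing the role of $\infty$) produces a non-Archimedean metric $d_j$ on $Y_j$ that induces the topology of $\mathbf{Y}_j$; again this is done by a uniform formula, so $\{d_j:j\in J\}$ is a genuine family of metrics. Since $J$ is a cuf set, Theorem \ref{t02.6} now gives that $\mathbf{Y}=\prod_{j\in J}\langle Y_j,\tau(d_j)\rangle$ is metrizable (indeed non-Archimedeanly metrizable), which closes the cycle. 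Apart from the choice-free bookkeeping in the key step, the argument closely parallels the proof of Theorem \ref{t03.8}.
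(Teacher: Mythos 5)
Your proof is correct and follows essentially the same route as the paper's: the paper likewise reduces everything to the implication from cuf pseudocharacter at the point $z=(\infty_j)_{j\in J}$, extracts (via the sets $E(i,U)$ borrowed from the proof of Theorem \ref{t03.8}) a choice-free family of presentations $X_j=\bigcup_{n}A_{j,n}$ with each $A_{j,n}$ finite, turns these into metrics as in the proof of Proposition \ref{p03.6}, and concludes with Theorem \ref{t02.6}. You merely supply explicitly the finiteness argument for $E(i,U)$ in the compactification setting, which the paper leaves implicit under ``mimicking the proof to Theorem \ref{t03.8}''.
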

\begin{proof} 
It suffices to prove that if $\mathbf{Y}$ is of cuf pseudocharacter, then $\mathbf{Y}$ is metrizabble. For every $j\in J$, let $\infty_j$ be the unique accumulation point of $\mathbf{Y}_j$ and let $X_j=Y_j\setminus\{\infty_j\}$. Then $\mathbf{Y}_j=\mathbf{X}_j(\infty_j)$. Let $z\in\prod\limits_{j\in J}X(\infty_j)$ be defined by: $z(j)=\infty_j$ for every $j\in J$. Assuming that $\mathbf{Y}$ is of cuf pseudocharacter, we fix (in $\mathbf{Y}$) pseudobase $\mathcal{U}$ at $z$ such that $\mathcal{U}$ is a cuf set. Let $\mathcal{U}=\bigcup\limits_{n\in\mathbb{N}}\mathcal{U}_n$ where, for every $n\in\mathbb{N}$, $\mathcal{U}_n$ is a finite family.  Mimicking the proof to Theorem \ref{t03.8}, we define a collection $\{A_{j,n}: j\in J, n\in\mathbb{N}\}$ of finite sets such that, for every $j\in J$,  $X_j=\bigcup\limits_{n\in\mathbb{N}}A_{j,n}$. We may assume that, for every $j\in J$ and every $n\in\mathbb{N}$, $A_{j,n}\subseteq A_{j,n+1}$. In much the same way, as in the proof to Proposition \ref{p03.6}, we define a collection $\{\rho_j: j\in J\}$ such that, for every $j\in J$, $\rho_j$ is a metric on $X_j(\infty_j)$ which induces the topology of $\mathbf{X}_j(\infty_j)$. This, together with Theorem \ref{t02.6}, implies that $\mathbf{Y}$ is metrizable. 
\end{proof}

The following corollary is a consequence of Theorem \ref{t03.10} and Proposition \ref{p03.3}. 

\begin{corollary}
\label{c03.11}
$(\mathbf{ZF})$ Let $J$ be a non-empty cuf set and let $\{\mathbf{Y}_j: j\in J\}$ be a disjoint family of one-point Hausdorff compactifications of infinite discrete spaces. Then the following conditions are equivalent:
\begin{enumerate}
\item[(i)] $\bigoplus\limits_{j\in J}\mathbf{Y}_j$ is metrizable
\item[(ii)] $\bigoplus\limits_{j\in J}\mathbf{Y}_j$ is quasi-metrizable
\item[(iii)] $\prod\limits_{j\in J}\mathbf{Y}_j$ is of cuf pseudocharacter. 
\end{enumerate}
\end{corollary}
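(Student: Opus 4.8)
The plan is to deduce Corollary \ref{c03.11} directly from Theorem \ref{t03.10} together with Proposition \ref{p03.3}, in exactly the way the sentence preceding the corollary announces. First I would observe that the hypotheses of Corollary \ref{c03.11} are a strengthening of those of Theorem \ref{t03.10}: we are given a non-empty cuf set $J$ and a \emph{disjoint} family $\{\mathbf{Y}_j: j\in J\}$ of one-point Hausdorff compactifications of infinite discrete spaces, together with the nonemptiness of $\prod_{j\in J}Y_j$ (which is part of the hypothesis of Proposition \ref{p03.3}; note each $Y_j$ is nonempty, but in $\mathbf{ZF}$ this does not by itself guarantee a choice function, so disjointness plus the standing assumption $\prod_{j\in J}Y_j\neq\emptyset$ is what we invoke). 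The equivalence $(i)\Leftrightarrow(ii)$ is then immediate from Proposition \ref{p03.3}, applied with $\langle X_j,\tau_j\rangle=\mathbf{Y}_j$: that proposition says, for a non-empty cuf set $J$ and a pairwise disjoint family with non-empty product, that $\bigoplus_{j\in J}\mathbf{Y}_j$ is (quasi)-metrizable iff $\prod_{j\in J}\mathbf{Y}_j$ is (quasi)-metrizable; reading this once with ``metrizable'' and once with ``quasi-metrizable'' gives $(i)\Leftrightarrow(ii)$.

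Next I would close the loop through $(iii)$. By Theorem \ref{t03.10} (whose hypotheses, a non-empty cuf set $J$ and a family of one-point Hausdorff compactifications of infinite discrete spaces, are certainly met here, disjointness being an extra hypothesis we simply do not need for that theorem), the product $\prod_{j\in J}\mathbf{Y}_j$ is metrizable iff it is quasi-metrizable iff it is of cuf pseudocharacter. In particular $\prod_{j\in J}\mathbf{Y}_j$ is quasi-metrizable $\Leftrightarrow$ $\prod_{j\in J}\mathbf{Y}_j$ is of cuf pseudocharacter. Combining this with the instance of Proposition \ref{p03.3} for ``quasi-metrizable'' gives: $\bigoplus_{j\in J}\mathbf{Y}_j$ is quasi-metrizable $\Leftrightarrow$ $\prod_{j\in J}\mathbf{Y}_j$ is quasi-metrizable $\Leftrightarrow$ $\prod_{j\in J}\mathbf{Y}_j$ is of cuf pseudocharacter, i.e. $(ii)\Leftrightarrow(iii)$. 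Together with $(i)\Leftrightarrow(ii)$ above, all three conditions are equivalent, which is the assertion of the corollary.

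There is essentially no obstacle here; the proof is a bookkeeping exercise in chaining two already-established results, and the only point worth a moment's care is checking that the disjointness and nonemptiness side-conditions needed by Proposition \ref{p03.3} are available in the hypothesis of Corollary \ref{c03.11} (they are: disjointness is assumed outright, and $\prod_{j\in J}Y_j\neq\emptyset$ is explicitly part of the hypothesis). So the write-up can be as short as: ``By Proposition \ref{p03.3} applied twice (once for metrizability, once for quasi-metrizability), conditions $(i)$ and $(ii)$ are each equivalent to the corresponding property of $\prod_{j\in J}\mathbf{Y}_j$, and by Theorem \ref{t03.10} these latter two properties of $\prod_{j\in J}\mathbf{Y}_j$, together with being of cuf pseudocharacter, are all equivalent; hence $(i)$, $(ii)$ and $(iii)$ are equivalent.'' If one wants to be slightly more explicit one can spell out the three-term chain $(i)\Leftrightarrow(ii)\Leftrightarrow(iii)$ as above, but no genuinely new argument is required beyond what Theorem \ref{t03.10} and Proposition \ref{p03.3} already provide.
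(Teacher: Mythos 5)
Your proposal is correct and is exactly the paper's route: the paper gives no separate argument for Corollary \ref{c03.11} beyond the one-line remark that it is a consequence of Theorem \ref{t03.10} and Proposition \ref{p03.3}, and your chaining of the two equivalences is precisely what that remark intends. One small inaccuracy: you assert twice that $\prod_{j\in J}Y_j\neq\emptyset$ is ``explicitly part of the hypothesis'' of the corollary --- it is not stated there. It is nevertheless available in $\mathbf{ZF}$ without any choice, because each $\mathbf{Y}_j$ is a one-point Hausdorff compactification of an infinite discrete space and hence has a unique accumulation point $\infty_j$, so $j\mapsto\infty_j$ is a definable choice function (this is exactly the point $z$ used in the proof of Theorem \ref{t03.10}); you should derive the nonemptiness this way rather than claim it as a hypothesis.
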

 
\begin{theorem}
\label{t03.12}
$(\mathbf{ZF})$ If every  countable product of compact  metrizable cuf spaces is of cuf pseudocharacter, then every  cuf product of cofinite cuf spaces is quasi-metrizable.
\end{theorem}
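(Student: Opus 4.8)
The plan is to deduce the conclusion from Theorem \ref{t02.6}: for the given cuf family $\{\langle X_j,\tau_j\rangle:j\in J\}$ of cofinite cuf spaces it is enough to construct, in $\mathbf{ZF}$ and without any appeal to choice, a family $\{\rho_j:j\in J\}$ of quasi-metrics with $\tau(\rho_j)=\tau_j$ for each $j\in J$; since $J$ is a cuf set, Theorem \ref{t02.6} then gives that $\prod_{j\in J}\langle X_j,\tau_j\rangle$ is quasi-metrizable. The data for this construction will be read off from a cuf pseudobase of a countable product of compact metrizable cuf spaces built from the $\langle X_j,\tau_j\rangle$ by one-point compactification. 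So, fix a cuf set $J$ and a family $\{\langle X_j,\tau_j\rangle:j\in J\}$ of cofinite cuf spaces. If $X_j=\emptyset$ for some $j$, then $\mathbf{X}:=\prod_{j\in J}\langle X_j,\tau_j\rangle$ is the empty space and there is nothing to prove; so assume $X_j\neq\emptyset$ for all $j\in J$. Put $\infty_j=X_j$ (so that $\infty_j\notin X_j$ by the Axiom of Foundation) and let $\mathbf{Z}_j$ be the space on $Z_j:=X_j\cup\{\infty_j\}$ with topology $\mathcal{P}(X_j)\cup\{U\subseteq Z_j:\infty_j\in U,\ X_j\setminus U\in[X_j]^{<\omega}\}$. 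When $X_j$ is infinite, $\mathbf{Z}_j$ is the one-point Hausdorff compactification of the discrete space $\langle X_j,\mathcal{P}(X_j)\rangle$ (cf.\ Definition \ref{d03.4} and Remark \ref{r03.5}), which is (non-Archimedeanly) metrizable by Proposition \ref{p03.6} since $X_j$ is a cuf set; when $X_j$ is finite, $\mathbf{Z}_j$ is a finite discrete space. In either case $\mathbf{Z}_j$ is a compact metrizable cuf space.

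Next I would group the index set. Write $J=\bigcup_{n\in\omega}J_n$ with each $J_n$ finite, and set $\mathbf{W}_n:=\prod_{j\in J_n}\mathbf{Z}_j$. In $\mathbf{ZF}$, a finite product of compact (respectively metrizable, respectively cuf) spaces is again compact (respectively metrizable, respectively cuf)---for metrizability one only has to pick a metric on each of the finitely many spaces $\mathbf{Z}_j$ with $j\in J_n$, a finite choice. Hence every $\mathbf{W}_n$ is a compact metrizable cuf space. Arguing exactly as in the proof of Proposition \ref{p03.1}, $\mathbf{Y}:=\prod_{j\in J}\mathbf{Z}_j$ is homeomorphic to the countable product $\prod_{n\in\omega}\mathbf{W}_n$, so the hypothesis of the theorem yields that $\mathbf{Y}$ is of cuf pseudocharacter (Definition \ref{d3.6}).

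The heart of the proof is the extraction step, and here it is essential that the pseudobase be taken at the point all of whose coordinates are ``at infinity''. Let $z_\infty\in\prod_{j\in J}Z_j$ be given by $z_\infty(j)=\infty_j$; since $\mathbf{Y}$ is of cuf pseudocharacter, fix in $\mathbf{Y}$ a pseudobase $\mathcal{U}$ at $z_\infty$ which is a cuf set, say $\mathcal{U}=\bigcup_{n\in\omega}\mathcal{U}_n$ with every $\mathcal{U}_n$ finite; every member of $\mathcal{U}$ is open and contains $z_\infty$. For $i\in J$, the map $\phi_i\colon\mathbf{Z}_i\to\mathbf{Y}$ sending $s$ to the point whose $i$-th coordinate is $s$ and whose other coordinates are all $\infty_j$ is continuous, and $\phi_i(\infty_i)=z_\infty$. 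Consequently, for every $U\in\mathcal{U}$ the set $\phi_i^{-1}(Y\setminus U)$ is a closed subset of $\mathbf{Z}_i$ not containing $\infty_i$, hence finite (when $X_i$ is infinite, its complement is an open neighbourhood of $\infty_i$ in $\mathbf{Z}_i$ and therefore cofinite in $X_i$; when $X_i$ is finite this is clear). Therefore $E(i,U):=\{t\in X_i:\phi_i(t)\notin U\}\subseteq\phi_i^{-1}(Y\setminus U)$ is finite, $A_{i,n}:=\bigcup_{U\in\mathcal{U}_n}E(i,U)$ is a finite union of finite sets and hence finite, and $\bigcup_{n\in\omega}A_{i,n}=X_i$, because for $t\in X_i$ we have $\phi_i(t)\neq z_\infty$, so $\phi_i(t)\notin U$ for some $U\in\mathcal{U}$. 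I expect this to be the main obstacle: at an arbitrary point of $\mathbf{Y}$ the analogous closed ``slices'' could contain one of the points $\infty_i$ and then fail to be finite, so the argument would break down.

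To conclude, replace each $A_{i,n}$ by $\bigcup_{m\le n}A_{i,m}$, so that $A_{i,n}\subseteq A_{i,n+1}$ for all $n$ and $X_i=\bigcup_{n\in\mathbb{N}}A_{i,n}$. The family $\{A_{i,n}:i\in J,\ n\in\omega\}$ has been obtained by an explicit formula from $J$, the spaces $\langle X_i,\tau_i\rangle$ and $\mathcal{U}$, so no form of choice has been used. By Lemma \ref{l03.7}, for each $i\in J$ the prescription $\rho_i(x,x)=0$ and $\rho_i(x,y)=\frac{1}{\min\{n\in\mathbb{N}:y\in A_{i,n}\}}$ for $x\neq y$ defines a (non-Archimedean) quasi-metric $\rho_i$ on $X_i$ whose induced topology $\tau(\rho_i)$ is the cofinite topology $\tau_i$. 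Thus $\{\rho_i:i\in J\}$ is a well-defined family of quasi-metrics, and since $J$ is a cuf set, Theorem \ref{t02.6} gives that $\mathbf{X}=\prod_{j\in J}\langle X_j,\tau(\rho_j)\rangle$ is quasi-metrizable. (If $J$ is finite this is immediate, since a finite product of quasi-metrizable spaces is quasi-metrizable.) This would complete the proof.
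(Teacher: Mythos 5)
Your proposal is correct and follows essentially the same route as the paper's proof: replace each cofinite cuf space by the one-point compactification of the corresponding discrete space (discrete when finite), regroup the cuf index set to apply the hypothesis to a countable product, take a cuf pseudobase at the all-infinity point, extract the finite sets $A_{j,n}$ from the complements of its members along the coordinate slices, and finish with Lemma \ref{l03.7} and Theorem \ref{t02.6}. You merely spell out explicitly the extraction step that the paper delegates to ``mimicking'' the proofs of Theorems \ref{t03.8} and \ref{t03.10}, and your observation that the slices avoid $\infty_j$ (hence are finite) is exactly the point that makes that step work.
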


\begin{proof}
Let $J$ be a non-empty cuf set and let $\{\langle X_{j},\tau_{j}\rangle: j\in J\}$ be a family of non-empty cuf spaces such that, for every $j\in J$, $\tau_{j}$ is the cofinite topology in $%
X_{j}$. Let $\infty\notin\bigcup\limits_{j\in J}X_j$. We put $Y_j=X_j\cup\{\infty\}$. If $X_j$ is finite, let $\mathbf{Y}_j$ be the discrete space $\langle Y_j, \mathcal{P}(Y_j)\rangle$. If $Y_j$ is infinite, let $\mathbf{Y}_j=\mathbf{X}_j(\infty)$. It follows from Proposition \ref{p03.6} that, for every $j\in J$, the space $\mathbf{Y}_j$ is a compact metrizable cuf space. Assume that all countable products of compact metrizable cuf spaces are of cuf pseudocharacter. Then it follows from the proof to Proposition \ref{p03.1} that the product $\mathbf{Y}=\prod\limits_{j\in J}\mathbf{Y}_j$ is of cuf pseudocharacter. Let $z\in\prod\limits_{j\in J}Y_j$ be defined by: $z(j)=\infty$ for each $j\in J$. In $\mathbf{Y}$. We fix a pseudobase $\mathcal{U}$ at $z$ such that $\mathcal{U}$ is a cuf set. One can mimic the proofs to Theorems \ref{t03.8} and \ref{t03.10} to find a collection $\{A_{j,n}: j\in J, n\in\mathbb{N}\}$ of finite sets such that, for every $j\in J$,  $X_j=\bigcup\limits_{n\in\mathbb{N}}A_{j,n}$. Now, it follows from Lemma \ref{l03.7} that there exists a collection $\{\rho_j: j\in J\}$ of quasi-metrics such that $\tau(\rho_j)=\tau_j$ for every $j\in J$. In the light of Theorem \ref{t02.6}, the product $\prod\limits_{j\in J}\langle X_j, \tau_j\rangle$ is quasi-metrizable.
\end{proof}

\begin{theorem}
\label{t03.14} $(\mathbf{ZF})$ If every cuf product of cofinite cuf spaces is of cuf pseudocharacter, then every cuf product of one-point Hausdorff compactifications of infinite discrete cuf spaces is metrizable.
\end{theorem}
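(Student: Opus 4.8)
The plan is to reduce everything, via Theorem~\ref{t03.10}, to showing that the relevant product is of cuf pseudocharacter, and then to read off cuf pseudocharacter directly from the hypothesis by comparing the product with a canonically associated cuf product of cofinite cuf spaces that has the same underlying set but carries a coarser topology.

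So let $J$ be a non-empty cuf set and, for each $j\in J$, let $\mathbf{Y}_j=\mathbf{X}_j(\infty_j)$ be the one-point Hausdorff compactification of the infinite discrete cuf space $\mathbf{X}_j=\langle X_j,\mathcal{P}(X_j)\rangle$, so that $Y_j=X_j\cup\{\infty_j\}$. First I would observe that $Y_j$ is again a cuf set (a cuf set with one point adjoined is a cuf set), hence the cofinite space $\langle Y_j,\sigma_j\rangle$ on $Y_j$ is a cofinite cuf space and $\mathbf{Z}=\prod_{j\in J}\langle Y_j,\sigma_j\rangle$ is a cuf product of cofinite cuf spaces in the sense of Definition~\ref{d1.8}. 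Next comes the one load-bearing observation: directly from Definition~\ref{d03.4}, every cofinite subset of $Y_j$ is open in $\mathbf{Y}_j$ (if it contains $\infty_j$ it is a set $U$ with $X_j\setminus U$ finite and $\infty_j\in U$, a basic open set of $\mathbf{X}_j(\infty_j)$; if it omits $\infty_j$ it is a subset of $X_j$, hence open because $\mathcal{P}(X_j)$ is part of the topology of $\mathbf{Y}_j$). Therefore $\sigma_j$ is coarser than the topology of $\mathbf{Y}_j$, and so the Tychonoff topology of $\mathbf{Z}$ is coarser than that of $\mathbf{Y}=\prod_{j\in J}\mathbf{Y}_j$, while both spaces have the same underlying set $\prod_{j\in J}Y_j$.

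By the hypothesis, $\mathbf{Z}$ is of cuf pseudocharacter. Since the topology of $\mathbf{Y}$ refines that of $\mathbf{Z}$ on the same point set, cuf pseudocharacter transfers for free: given $y\in\prod_{j\in J}Y_j$, any cuf family $\mathcal{U}$ of open subsets of $\mathbf{Z}$ with $\bigcap\mathcal{U}=\{y\}$ consists of open subsets of $\mathbf{Y}$ as well, is still a cuf family, and still has intersection $\{y\}$, so it is a cuf pseudobase at $y$ in $\mathbf{Y}$; hence $\mathbf{Y}$ is of cuf pseudocharacter, and Theorem~\ref{t03.10} gives that $\mathbf{Y}$ is metrizable. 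I do not expect a real obstacle here: the only points to watch are that the argument stays choice-free (it does — we merely push a given pseudobase of $\mathbf{Z}$ forward to the finer space $\mathbf{Y}$, making no selection over $J$), that the coarser factor must be taken to be $\langle Y_j,\sigma_j\rangle$ rather than the cofinite topology on $X_j$ so that the two products literally share an underlying set, and that the degenerate case causes no trouble since $\prod_{j\in J}Y_j$ always contains the point all of whose coordinates equal $\infty_j$. One could alternatively avoid Theorem~\ref{t03.10} and, mimicking the proof of Theorem~\ref{t03.12}, extract from a cuf pseudobase of $\mathbf{Z}$ a family $\{A_{j,n}:j\in J,n\in\mathbb{N}\}$ of finite sets with $X_j=\bigcup_{n\in\mathbb{N}}A_{j,n}$ and build the metrics directly via Proposition~\ref{p03.6} and Theorem~\ref{t02.6}, but invoking Theorem~\ref{t03.10} is cleaner.
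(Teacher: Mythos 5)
Your proof is correct, and its opening move is exactly the paper's: both you and the authors put the cofinite topology on the full underlying set $Y_j=X_j\cup\{\infty_j\}$ (which is still a cuf set) and apply the hypothesis to the resulting cuf product of cofinite cuf spaces. Where you diverge is in how you get from there to metrizability. The paper takes a pseudobase at the point $z$ with $z(j)=\infty_j$ in the cofinite product and re-runs the extraction argument of Theorems~\ref{t03.8} and~\ref{t03.12} to produce the finite sets $A_{j,n}$ with $X_j=\bigcup_{n}A_{j,n}$, then builds the metrics $d_j$ as in Proposition~\ref{p03.6} and concludes via Theorem~\ref{t02.6}. You instead observe that every cofinite subset of $Y_j$ is open in $\mathbf{X}_j(\infty_j)$, so the cofinite product and $\prod_{j\in J}\mathbf{Y}_j$ share an underlying set with comparable topologies, and cuf pseudocharacter passes for free from the coarser space to the finer one; Theorem~\ref{t03.10} then finishes the job. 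This is a genuine shortcut: the comparison-of-topologies step replaces the re-derivation of the $A_{j,n}$, and it delegates the metric construction entirely to the already-proved Theorem~\ref{t03.10} (whose proof of course contains the same extraction internally). Your argument is choice-free throughout, correctly handles the definability of the map $j\mapsto\infty_j$ (the accumulation point is unique, so no selection is made), and buys a cleaner, more modular proof at no cost; the paper's version has the mild advantage of exhibiting the metrics explicitly without passing through the pseudocharacter transfer.
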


\begin{proof} Let $J$ be a non-empty cuf set and let $\{\mathbf{Y}_j: j\in J\}$ be a collection of one-point Hausdorff compactifications of infinite discrete cuf spaces. Similarly to the proof of Theorem \ref{t03.10}, for every $j\in J$, let $\infty_j$ be the unique accumulation point of $\mathbf{Y}_j$ and let $X_j=Y_j\setminus\{\infty_j\}$. Then $\mathbf{Y}_j=\mathbf{X}_j(\infty_j)$. For every $j\in J$, let $\tau_j$ be the cofinite topology in $Y_j$. Suppose that the space $\mathbf{X}=\prod\limits_{j\in J}\langle Y_j, \tau_j\rangle$ is of cuf pseudocharacter. In much the same way, as in the proofs to Theorems \ref{t03.10} and \ref{t03.12}, we can define a family $\{ A_{j,n}: j\in J, n\in\mathbb{N}\}$ of finite sets such that $X_j=\bigcup\limits_{n\in\mathbb{N}}A_{j,n}$ for every $j\in J$. Arguing similarly to the proof that (iv) implies (i) in Proposition \ref{p03.6}, we can define a collection $\{d_j: j\in J\}$ such that, for every $j\in J$, $d_j$ is a metric which induces the topology of $\mathbf{Y}_j$. It follows from Theorem \ref{t02.6} that $\prod\limits_{j\in J}\mathbf{Y}_j$ is metrizable.
\end{proof}

Using similar arguments, one can prove the following theorem:

\begin{theorem} 
\label{t03.15}
$(\mathbf{ZF})$ If every countable product of countable cofinite spaces is of cuf pseudocharacter, then every countable product of one-point Hausdorff compactifications of denumerable discrete spaces is metrizable.
\end{theorem}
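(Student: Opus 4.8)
The plan is to mimic the proof of Theorem~\ref{t03.14}, with ``cuf'' replaced by ``countable'' where appropriate. Fix a family $\{\mathbf{Y}_n:n\in\mathbb{N}\}$ of one-point Hausdorff compactifications of denumerable discrete spaces. For every $n\in\mathbb{N}$, let $\infty_n$ be the unique accumulation point of $\mathbf{Y}_n$ and put $X_n=Y_n\setminus\{\infty_n\}$, so that $\mathbf{Y}_n=\mathbf{X}_n(\infty_n)$ and $X_n$ is denumerable. Let $\tau_n$ be the cofinite topology in $Y_n$ and set $\mathbf{X}=\prod_{n\in\mathbb{N}}\langle Y_n,\tau_n\rangle$ with underlying set $X=\prod_{n\in\mathbb{N}}Y_n$. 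Since $Y_n=X_n\cup\{\infty_n\}$ is denumerable, each $\langle Y_n,\tau_n\rangle$ is a countable cofinite space, hence $\mathbf{X}$ is a countable product of countable cofinite spaces; so, by hypothesis, $\mathbf{X}$ is of cuf pseudocharacter.

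The next step is to extract, choice-freely, a uniformly defined sequence of finite sets. Let $z\in X$ be the canonical point given by $z(n)=\infty_n$ for every $n\in\mathbb{N}$; the availability of $z$ without any use of a choice principle is exactly why one works with the compactified spaces $Y_n$ rather than with the $X_n$. Fix in $\mathbf{X}$ a pseudobase $\mathcal{U}$ at $z$ which is a cuf set, say $\mathcal{U}=\bigcup_{k\in\mathbb{N}}\mathcal{U}_k$ with each $\mathcal{U}_k$ finite. As in the proofs of Theorems~\ref{t03.8} and~\ref{t03.14}, for $i,k\in\mathbb{N}$ and $U\in\mathcal{U}_k$ let $E(i,U)$ be the set of all $t\in Y_i$ for which there is some $y\in X\setminus U$ with $y(i)=t$ and $y(j)=z(j)$ for every $j\in\mathbb{N}\setminus\{i\}$. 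Then $E(i,U)$ is the preimage of the closed set $X\setminus U$ under the continuous ``slice'' map carrying $t\in Y_i$ to the point of $X$ that agrees with $z$ off the $i$-th coordinate, so $E(i,U)$ is closed in the cofinite space $\langle Y_i,\tau_i\rangle$; since the slice of $\infty_i$ is $z\in U$, we have $\infty_i\notin E(i,U)$, so $E(i,U)$ is a proper closed subset of $\langle Y_i,\tau_i\rangle$ and therefore finite. Put $A_{i,k}=\bigcup_{U\in\mathcal{U}_k}E(i,U)$. Then $\{A_{i,k}:i,k\in\mathbb{N}\}$ is a set of finite sets, and, because $\mathcal{U}$ is a pseudobase at $z$, one checks that $X_i=\bigcup_{k\in\mathbb{N}}A_{i,k}$ for each $i\in\mathbb{N}$; replacing $A_{i,k}$ by $\bigcup_{m\leq k}A_{i,m}$, we may assume in addition that $A_{i,k}\subseteq A_{i,k+1}$ for all $i,k\in\mathbb{N}$.

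Finally, I would carry out the metrization construction of Proposition~\ref{p03.6}. For each $i\in\mathbb{N}$, set $n_i(t)=\min\{k\in\mathbb{N}:t\in A_{i,k}\}$ for $t\in X_i$, and let $d_i$ be the non-Archimedean metric on $Y_i=X_i(\infty_i)$ defined by $d_i(t,s)=0$ if $t=s$, by $d_i(t,s)=\max\{\frac{1}{n_i(t)},\frac{1}{n_i(s)}\}$ if $t,s\in X_i$ and $t\neq s$, and by $d_i(t,\infty_i)=d_i(\infty_i,t)=\frac{1}{n_i(t)}$ for $t\in X_i$; by the computation in Proposition~\ref{p03.6}, $\tau(d_i)$ is precisely the topology of $\mathbf{Y}_i$. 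Since $i\mapsto d_i$ is explicitly defined in terms of the set $\{A_{i,k}:i,k\in\mathbb{N}\}$, the family $\{d_i:i\in\mathbb{N}\}$ is a genuine sequence of metrics obtained without any appeal to choice, so Theorem~\ref{t02.6} gives that $\prod_{n\in\mathbb{N}}\mathbf{Y}_n=\prod_{n\in\mathbb{N}}\langle Y_n,\tau(d_n)\rangle$ is metrizable. The only place where $\mathbf{ZF}$-care is genuinely needed — and hence the heart of the argument — is the passage from a single cuf pseudobase $\mathcal{U}$ at the canonical point $z$ to the uniformly defined family $\{A_{i,k}\}$; once that uniformity is secured, the remainder is the routine construction already used in Proposition~\ref{p03.6} and Theorems~\ref{t03.8} and~\ref{t03.14}.
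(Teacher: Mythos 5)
Your proposal is correct and follows essentially the same route the paper intends: the paper proves Theorem \ref{t03.15} by the remark ``using similar arguments'' referring to Theorems \ref{t03.10}, \ref{t03.12} and \ref{t03.14}, and your write-up is precisely that argument specialized to the countable case (equip each $Y_n$ with its cofinite topology, extract the finite sets $E(i,U)$ and $A_{i,k}$ from a cuf pseudobase at the point $z$ with $z(n)=\infty_n$, and metrize each $\mathbf{Y}_n$ via the construction of Proposition \ref{p03.6} before applying Theorem \ref{t02.6}). Your identification of the choice-free uniformity of the family $\{A_{i,k}\}$ as the crux matches the paper's implicit reasoning.
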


\begin{theorem}
\label{t03.18}
$(\mathbf{ZF})$ 
\begin{enumerate}
\item[(i)] $\mathbf{UT}(cuf, cuf, cuf)$ and $\mathbf{CPcofinQM}$ are both  equivalent to the following sentence: Every cuf product of cofinite cuf spaces is first-countable.
\item[(ii)] $\mathbf{UT}(\aleph_0, \aleph_0, cuf)$ and $\mathbf{CPcofin(\omega)QM}$ are both equivalent to the following sentence: Every cuf product of countable cofinite spaces is first-countable.
\item[(iii)] $\mathbf{CPcofin(\omega)QM}$ does not imply $\mathbf{CPcofinQM}$.
\end{enumerate}
\end{theorem}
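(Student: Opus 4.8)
The plan is to establish Theorem~\ref{t03.18} in three stages, reusing the machinery built in Theorems~\ref{t03.8}, \ref{t03.12}, and \ref{t02.2}, together with Theorem~\ref{t02.8} and Proposition~\ref{p03.1}. For part~(i), I would first prove the cycle of implications
\[
\mathbf{UT}(cuf,cuf,cuf)\ \Longrightarrow\ \mathbf{CPcofinQM}\ \Longrightarrow\ (\text{every cuf product of cofinite cuf spaces is first-countable})\ \Longrightarrow\ \mathbf{UT}(cuf,cuf,cuf).
\]
The first implication is the content of Theorem~\ref{t02.6}: if $\mathbf{UT}(cuf,cuf,cuf)$ holds and $J$ is a cuf set with $\{\langle X_j,\tau_j\rangle:j\in J\}$ a family of cofinite cuf spaces, then $X=\bigcup_{j\in J}X_j$ is a cuf set, hence each $X_j$ carries a non-Archimedean quasi-metric inducing $\tau_j$ by (the construction in) Lemma~\ref{l03.7}, and Theorem~\ref{t02.6} makes the product quasi-metrizable; by Proposition~\ref{p03.1} it suffices to do this for countable products, so this also yields $\mathbf{CPcofinQM}$. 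The second implication is immediate from Remark~\ref{r3.7}(i), since a quasi-metrizable space is first-countable (and first-countability is inherited by the cuf version via Proposition~\ref{p03.1}). The real work is the third implication: given that every cuf product of cofinite cuf spaces is first-countable, derive $\mathbf{UT}(cuf,cuf,cuf)$; by Theorem~\ref{t02.2} it is equivalent to prove $\mathbf{UT}(\aleph_0,cuf,cuf)$, i.e.\ that a countable union $\bigcup_{n\in\mathbb N}X_n$ of cuf sets is cuf.

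For that closing implication, I would build, from a countable family $\{X_n:n\in\mathbb N\}$ of (disjoint, without loss of generality) cuf sets, the product $\mathbf{X}=\prod_{n\in\mathbb N}\langle X_n\cup\{\infty_n\},\tau_n\rangle$ where $\tau_n$ is the cofinite topology on $X_n\cup\{\infty_n\}$ — a countable, hence cuf, product of cofinite cuf spaces. By hypothesis it is first-countable, so the point $z$ with $z(n)=\infty_n$ has a countable neighborhood base $\{W_k:k\in\mathbb N\}$. Each $W_k$ contains a basic open box, and unwinding the cofinite structure exactly as in the proofs of Theorems~\ref{t03.8} and \ref{t03.12} (taking $A_{n,k}=\bigcup_{U}E(n,U)$ over the relevant finite data extracted from $W_k$), one produces a family $\{A_{n,k}:n,k\in\mathbb N\}$ of finite sets with $X_n=\bigcup_{k\in\mathbb N}A_{n,k}$ for every $n$. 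Then $\bigcup_{n\in\mathbb N}X_n=\bigcup_{(n,k)\in\mathbb N\times\mathbb N}A_{n,k}$ exhibits the union as a cuf set. Part~(ii) is proved by the identical argument, restricting throughout to countable cofinite spaces: every factor $X_n\cup\{\infty_n\}$ is countable, the product is a genuine countable product, and the conclusion becomes $\mathbf{UT}(\aleph_0,\aleph_0,cuf)$; the equivalence with $\mathbf{CPcofin(\omega)QM}$ follows as before via Theorem~\ref{t02.6}, Lemma~\ref{l03.7}, and Remark~\ref{r3.7}(i). Note that for part~(ii) one cannot invoke Theorem~\ref{t02.2}, so the first-countability hypothesis must be applied directly to countable products of countable cofinite spaces, which is exactly what the statement provides.

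Part~(iii) is a separation result and follows by combining parts~(i) and (ii) with the known model of Theorem~\ref{t02.5}: there is a model of $\mathbf{ZF}$ in which $\mathbf{UT}(\aleph_0,\aleph_0,cuf)$ holds but $\mathbf{UT}(\aleph_0,cuf,cuf)$ fails, and by Theorem~\ref{t02.2} in that model $\mathbf{UT}(cuf,cuf,cuf)$ fails as well. By part~(ii), $\mathbf{CPcofin(\omega)QM}$ holds in that model; by part~(i), $\mathbf{CPcofinQM}$ fails; hence $\mathbf{CPcofin(\omega)QM}\not\Rightarrow\mathbf{CPcofinQM}$.

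\textbf{Main obstacle.} The one delicate point is the extraction of the finite sets $A_{n,k}$ from a countable neighborhood base of $z$ in the third implication of part~(i) and in part~(ii): one must verify that the relevant sets $E(n,U)$ are closed and proper in the cofinite factor — hence finite — without any appeal to choice, and that the indexing by $(n,k)$ is genuinely canonical so that no choice is used in forming $\bigcup_{n,k}A_{n,k}$. This is precisely the kind of bookkeeping already carried out in Theorems~\ref{t03.8} and \ref{t03.12}, so it should go through; the main care is to keep all constructions choice-free and to remember, in part~(ii), that Theorem~\ref{t02.2} is unavailable and the hypothesis must be used verbatim on countable products of \emph{countable} cofinite spaces.
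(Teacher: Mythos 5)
There is a genuine gap in the middle link of your cycle for part (i): the claim that $\mathbf{CPcofinQM}\Rightarrow$ ``every cuf product of cofinite cuf spaces is first-countable'' is \emph{not} immediate, and your appeal to Proposition~\ref{p03.1} does not repair it. Proposition~\ref{p03.1} only transfers \emph{countably multiplicative} properties to cuf products, and first-countability is precisely \emph{not} countably multiplicative in $\mathbf{ZF}$ --- that failure is the whole content of the theorem. Nor can you apply $\mathbf{CPcofinQM}$ itself to a cuf product: a cuf index set $J$ need not be countable in $\mathbf{ZF}$, and after grouping $J=\bigcup_n J_n$ the factors $\prod_{j\in J_n}\mathbf{X}_j$ are finite products of cofinite spaces, which are no longer cofinite, so the hypothesis of $\mathbf{CPcofinQM}$ does not apply to them. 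As far as I can see there is no direct route from $\mathbf{CPcofinQM}$ to first-countability of cuf products; one has to pass through $\mathbf{UT}(\aleph_0,cuf,cuf)$ first.

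The paper avoids this by ordering the cycle differently: $\mathbf{UT}(cuf,cuf,cuf)\Rightarrow$ first-countability $\Rightarrow\mathbf{CPcofinQM}\Rightarrow\mathbf{UT}(\aleph_0,cuf,cuf)$, closing with Theorem~\ref{t02.2}. The first arrow is proved by an \emph{explicit} construction: writing $E=\bigcup_{j\in J}X_j=\bigcup_{k\in\omega}A_k$ with each $A_k$ finite, the families $\mathcal{B}_j(x)=\bigl\{\{x\}\cup\bigl(X_j\setminus\bigcup_{i\in k+1}A_i\bigr):k\in\omega\bigr\}$ are canonical countable neighborhood bases, and these assemble into countable bases for the grouped factors and hence for the product. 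The second arrow uses Theorem~\ref{t02.8} (a quasi-metrizable cofinite space has cuf underlying set) together with Theorem~\ref{t03.8} (cuf pseudocharacter implies quasi-metrizability for such products). Your remaining ingredients --- $\mathbf{UT}\Rightarrow\mathbf{CPcofinQM}$ via Lemma~\ref{l03.7} and Theorem~\ref{t02.6}, the extraction of the finite sets $A_{n,k}$ from a countable neighborhood base at the distinguished point $z$, and the derivation of (iii) from Theorems~\ref{t02.5} and~\ref{t02.2} --- are all sound and match the paper; so the fix is to reorder the cycle (both in (i) and in (ii)) and supply the explicit base construction for the implication out of $\mathbf{UT}$, rather than asserting the cuf transfer of first-countability abstractly.
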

\begin{proof}
We fix a cuf set $J$ and a family $\{X_j: j\in J\}$ of cofinite cuf sets. For every $j\in J$, let $\tau_j$ be the cofinite topology in $X_j$, let $\mathbf{X}_j=\langle X_j, \tau_j\rangle$, $X=\prod\limits_{j\in J}X_j$ and $\mathbf{X}=\prod\limits_{j\in J}\mathbf{X}_j$ . We assume that $J=\bigcup\limits_{n\in\omega}J_n$ where $\{J_n: n\in\omega\}$ is a disjoint family of non-empty finite sets. For every $n\in\omega$, let $\mathbf{Y}_n=\prod\limits_{j\in J_n}\mathbf{X}_j$ and $\mathbf{Y}=\prod\limits_{n\in\omega}\mathbf{Y}_n$. Then $\mathbf{X}$ and $\mathbf{Y}$ are homeomorphic.  For every $n\in\omega$, let $E_n=\bigcup\limits_{j\in J_n}X_j$. Then, for every $n\in\omega$,  $E_n$ is a cuf set and, moreover, if $X_j$ is countable for every $j\in J_n$, then $E_n$ is countable. Suppose that $E=\bigcup\limits_{n\in\omega}E_n$ is a cuf set. Let $E=\bigcup\limits_{n\in\omega}A_n$ where, for every $n\in\omega$, $A_n$ is a finite set. For fixed  $j\in J$ and $x\in X_j$, we define a countable collection $\mathcal{B}_j(x)$ as follows:
$$\mathcal{B}_j(x)=\{\{ x\}\cup (X_j\setminus\bigcup\limits_{i\in k+1}A_i): k\in\omega\}.$$
It is obvious that $\mathcal{B}_j(x)\subseteq\tau_j$. On the other hand, if $x\in U\in\tau_j$, then $X_j\setminus U$ is a finite set, so there exists $k_U\in\omega$ such that $X_j\setminus U\subseteq\bigcup\limits_{i\in k_U+1}A_i$. Then $V=\{x\}\cup(X_j\setminus\bigcup\limits_{i\in k_U+1}A_i)\in\mathcal{B}_j(x)$ and $V\subseteq U$. This proves that $\mathcal{B}_j(x)$ is a countable base of neighborhoods of $x$ in $\mathbf{X}_j$. Using the collections $\mathcal{B}_j(x)$, for every point $y$ of $\mathbf{Y}$, we can define a collection $\{U_{n,i}(y(n)): n,i\in\omega\}$ such that, for every $n\in\omega$, the collection $\{U_{n,i}(y(n)): i\in\omega\}$ is a base of neighborhoods of $y(n)$ in  $\mathbf{Y}_n$. This proves that $\mathbf{Y}$ is first-countable if $E$ is a cuf set. Hence $\mathbf{X}$ is first-countable if $E$ is a cuf set. Furthermore, it follows from Theorem \ref{t03.8} that if $E$ is a cuf set, then $\mathbf{X}$ is quasi-metrizable. Hence, $\mathbf{UT}(cuf,cuf, cuf)$ implies ``Every cuf product of cofinite cuf spaces is first-countable'' implies $\mathbf{CPcofinQM}$; moreover,  $\mathbf{UT}(\aleph_0, \aleph_0, cuf)$ implies ``Every cuf product of countable cofinite spaces is first-countable'' implies $\mathbf{CPcofin(\omega)QP}$.

Now, suppose that $J=\omega$ and assume $\mathbf{CPcofinQM}$. We prove that $\bigcup\limits_{n\in\omega}X_n$ is a cuf set. We notice that, given a sequence $(\infty_n)_{n\in\omega}$ of elements, if $\bigcup\limits_{n\in\omega}(X_n\cup\{\infty_n\})$ is a cuf set, then $\bigcup\limits_{n\in\omega}X_n$ is a cuf set. Hence,  without loss of generality, we may assume that we have fixed a sequence $(\infty_n)_{n\in\omega}$ such that $\infty_n\in X_n$ for each $n\in\omega$. Let $z\in X$ be defined by: $z(n)=\infty_n$ for each $n\in\omega$. By $\mathbf{CPcofinQM}$, the space $\mathbf{X}$ is quasi-metrizable, so it is first-countable. Let $\mathcal{V}$ be a countable base of neighborhoods of $z$ in $\mathbf{X}$ and, for every $i\in\omega$, let $\pi_i:X\to X_i$ be the projection. We notice that the collection $\mathcal{E}=\{X_n\setminus\pi_n(V): n\in\omega, V\in\mathcal{V}\}\cup\{\{\infty_n\}: n\in\omega\}$ is countable and consists of finite sets. To check that $\mathcal{E}$ is a cover of $\bigcup\limits_{n\in\omega}X_n$, we fix $i\in\omega$ and a point $t\in X_i\setminus\{\infty_i\}$. The set $\pi_i^{-1}(X_i\setminus\{t\})$ is a neighborhood of $z$ in $\mathbf{X}$, so there exists $V_0\in\mathcal{V}$ such that $V_0\subseteq \pi_i^{-1}(X_i\setminus\{t\})$. Then $\pi_i(V_0)\subseteq X_i\setminus\{t\}$ and, in consequence, $t\in X_i\setminus\pi_i(V_0)\in\mathcal{E}$. This proves that $\bigcup\limits_{n\in\omega}X_n=\bigcup\mathcal{E}$; thus, $\bigcup\limits_{n\in\omega}X_n$ is a cuf set. Hence $\mathbf{CPcofinQM}$ implies $\mathbf{UT}(\aleph_0, cuf, cuf)$. We notice that if $X_n$ is countable for each $n\in\omega$, then $\mathbf{CPcofin(\omega)QM}$ implies that $\bigcup\limits_{n\in\omega}X_n$ is a cuf set. Hence $\mathbf{CPcofin(\omega)QM}$ implies $\mathbf{UT}(\aleph_0,\aleph_0, cuf)$.  To complete the proof, it suffices to apply Theorem \ref{t02.2}.
\end{proof}

The following corollary summarizes our main results on cuf products of cofinite cuf spaces.

\begin{corollary}
\label{c03.20}
 $(\mathbf{ZF})$
\begin{enumerate}
\item[(i)]  $\mathbf{UT}(cuf, cuf, cuf)$, $\mathbf{UT}(\aleph_0, cuf, cuf)$, $\mathbf{CPcofinQM}$, ``Every cuf prod\-uct of cofinite cuf spaces is quasi-metrizable'', ``Every cuf product of cofinite cuf spaces is first-countable'', ``Every cuf product of cofinite cuf spaces is of countable pseudocharacter'', ``Every cuf product of cofinite cuf spaces is of cuf pseudocharacter'' and ``Every cuf product of one-point Hausdorff compactifications of infinite discrete cuf spaces is met\-rizable'' are all equivalent statements.

\item[(ii)] $\mathbf{UT}(\aleph_0, \aleph_0, cuf)$, $\mathbf{CPcofin(\omega)QM}$, ``Every cuf product of countable co\-finite spaces is quasi-metrizable'', ``Every cuf product of countable cofinite spaces is first-countable'', ``Every cuf product of countable cofinite spaces is of countable pseudocharacter'',``Every cuf product of countable cofinite spaces is of cuf pseudocharacter"  and ``Every cuf product of one-point Hausdorff compactifications of denumerable discrete spaces is metrizable'' are all equivalent statements.

\item[(iii)] In (i) and (ii), cuf products can be replaced with countable products. 
\end{enumerate}
\end{corollary}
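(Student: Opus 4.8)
The plan is to deduce the corollary by stitching together the implications established earlier in this subsection with a few elementary observations, rather than by proving anything substantially new. By Theorem \ref{t02.2} and Theorem \ref{t03.18}(i), the statements $\mathbf{UT}(cuf,cuf,cuf)$, $\mathbf{UT}(\aleph_0,cuf,cuf)$, $\mathbf{CPcofinQM}$ and ``every cuf product of cofinite cuf spaces is first-countable'' are pairwise equivalent, and by Theorem \ref{t03.18}(ii) so are the statements $\mathbf{UT}(\aleph_0,\aleph_0,cuf)$, $\mathbf{CPcofin(\omega)QM}$ and ``every cuf product of countable cofinite spaces is first-countable''; these will serve as the ``core'' forms of (i) and (ii), respectively, and what remains is to absorb each remaining item of the two lists into the corresponding core.

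First I would close the short cycle among the properties ``quasi-metrizable'', ``first-countable'', ``of countable pseudocharacter'' and ``of cuf pseudocharacter'', applied to cuf products of cofinite cuf spaces (for (i)) and of countable cofinite spaces (for (ii)). A product of cofinite spaces is a $T_1$-space, so by Remark \ref{r3.7}(i) such a product that is quasi-metrizable is first-countable, one that is first-countable is of countable pseudocharacter, and one that is of countable pseudocharacter is of cuf pseudocharacter; Theorem \ref{t03.8} supplies the arrow closing the cycle, namely that such a product is quasi-metrizable once it is of cuf pseudocharacter. This makes the four ``cuf product of cofinite cuf spaces is $P$'' statements (and, in parallel, the four ``cuf product of countable cofinite spaces is $P$'' statements) mutually equivalent, and, through the ``first-countable'' member, equivalent to the corresponding core form.

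Next I would splice in the statements on one-point compactifications. In the direction from a core form: ``cuf product of cofinite cuf spaces is of cuf pseudocharacter'' implies ``every cuf product of one-point Hausdorff compactifications of infinite discrete cuf spaces is metrizable'' by Theorem \ref{t03.14}, and the proof of Theorem \ref{t03.14} with ``denumerable discrete'' in place of ``infinite discrete cuf'' (observe that if $X_j$ is denumerable then $X_j\cup\{\infty_j\}$ with its cofinite topology is a countable cofinite space) shows that ``cuf product of countable cofinite spaces is of cuf pseudocharacter'' implies ``every cuf product of one-point Hausdorff compactifications of denumerable discrete spaces is metrizable''. For the reverse direction one restricts the hypothesis to the index set $\omega$: given a countable family $\{X_n:n\in\omega\}$ of cuf sets (for (i)), respectively of countable sets (for (ii)), one may assume without loss of generality that each $X_n$ is infinite, since the $X_n$ that are finite together form a countable union of finite sets, hence a cuf set, and a finite union of cuf sets is a cuf set; one then forms the one-point compactifications $\mathbf{X}_n(\infty_n)$ (these are one-point Hausdorff compactifications of infinite discrete cuf spaces, respectively of denumerable discrete spaces) and notes that $\prod_{n\in\omega}\mathbf{X}_n(\infty_n)$, being metrizable, is first-countable, hence of cuf pseudocharacter. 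Mimicking the slice argument in the proofs of Theorems \ref{t03.8} and \ref{t03.10}, one extracts from a cuf pseudobase at the point $z$ given by $z(n)=\infty_n$ a family $\{A_{n,m}:n,m\in\omega\}$ of finite sets with $X_n=\bigcup_{m\in\omega}A_{n,m}$, whence $\bigcup_{n\in\omega}X_n=\bigcup_{n,m\in\omega}A_{n,m}$ is a cuf set. This yields $\mathbf{UT}(\aleph_0,cuf,cuf)$ in case (i) and $\mathbf{UT}(\aleph_0,\aleph_0,cuf)$ in case (ii), closing both lists.

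Finally, (iii) is almost free. Each ``countable product'' statement is a particular case of the corresponding ``cuf product'' statement because $\omega$ is a cuf set, and conversely every implication used above from a ``countable product'' statement to a core form uses only countable products --- this holds for Theorem \ref{t03.8}, for the slice argument just described, and for the proof of $\mathbf{CPcofinQM}\to\mathbf{UT}(\aleph_0,cuf,cuf)$ inside Theorem \ref{t03.18} --- so the ``countable product'' version of each statement already implies its ``cuf product'' version by way of the core forms. I expect the only genuinely non-mechanical point to be the reverse implication for the one-point-compactification statements, which forces one to re-run the slice/covering argument adapted to one-point compactifications; everything else is bookkeeping over implications already in hand.
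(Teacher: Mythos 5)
Your proposal is correct and follows exactly the route the paper intends: the corollary is stated without proof as a summary, and the intended derivation is precisely your assembly of Theorem \ref{t02.2}, Theorem \ref{t03.18}, Remark \ref{r3.7}(i) together with Theorem \ref{t03.8} for the pseudocharacter cycle, and Theorems \ref{t03.14}/\ref{t03.15} plus the slice argument for the one-point-compactification items. Your explicit treatment of the reverse implication for the compactification statements and of part (iii) only fills in bookkeeping the paper leaves implicit.
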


It was shown in \cite{HowTach} that $\mathbf{UT}(\aleph_0, cuf, cuf)$ does not imply in $\mathbf{ZF}$ that all countable products of metrizable spaces are quasi-metrizable. Hence, from Corollary \ref{c03.20}, we deduce the following theorem:

\begin{theorem}
\label{t3.20}
There is model of $\mathbf{ZF}$ in which $\mathbf{CPcofinQM}$ is true and $\mathbf{CPQM}$ is false. 
\end{theorem}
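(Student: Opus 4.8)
The plan is to import the separating model directly from the work of Howard and Tachtsis and then translate its properties through the equivalences already established in the excerpt. Concretely, I would invoke the result cited from \cite{HowTach} that there is a model $\mathcal{M}$ of $\mathbf{ZF}$ in which $\mathbf{UT}(\aleph_0, cuf, cuf)$ holds while the statement ``all countable products of metrizable spaces are quasi-metrizable'' fails. This model is the candidate witness for the theorem, so the remaining work is purely a matter of deriving the two required facts about $\mathcal{M}$ from what it is already known to satisfy.

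First I would check that $\mathbf{CPcofinQM}$ is true in $\mathcal{M}$. By Corollary \ref{c03.20}(i) (whose proof rests on Theorem \ref{t03.18}(i) together with Theorem \ref{t02.2}, which identifies $\mathbf{UT}(\aleph_0, cuf, cuf)$ with $\mathbf{UT}(cuf, cuf, cuf)$), the form $\mathbf{CPcofinQM}$ is equivalent in $\mathbf{ZF}$ to $\mathbf{UT}(\aleph_0, cuf, cuf)$. Since $\mathcal{M}\models \mathbf{UT}(\aleph_0, cuf, cuf)$, we conclude $\mathcal{M}\models \mathbf{CPcofinQM}$.

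Next I would check that $\mathbf{CPQM}$ is false in $\mathcal{M}$. Here I would use only the elementary observation that every metric is a quasi-metric, so every metrizable space is quasi-metrizable; hence $\mathbf{CPQM}$ implies the statement ``every countable product of metrizable spaces is quasi-metrizable.'' As the latter statement fails in $\mathcal{M}$ by the choice of the model, so does $\mathbf{CPQM}$. Putting the two observations together, $\mathcal{M}$ is a model of $\mathbf{ZF}$ in which $\mathbf{CPcofinQM}$ holds and $\mathbf{CPQM}$ fails, which is exactly the assertion of the theorem.

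I expect no real obstacle: the whole content has been pushed into the prior results, namely the equivalence of $\mathbf{CPcofinQM}$ with $\mathbf{UT}(\aleph_0, cuf, cuf)$ in Corollary \ref{c03.20} and the external independence result of \cite{HowTach}. The only point worth stating explicitly is the trivial implication from $\mathbf{CPQM}$ to its metrizable-product specialization, which uses nothing beyond the fact that a metric is a quasi-metric.
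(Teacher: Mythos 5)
Your proposal is correct and is essentially identical to the paper's own argument: the authors also take the Howard--Tachtsis model separating $\mathbf{UT}(\aleph_0, cuf, cuf)$ from ``all countable products of metrizable spaces are quasi-metrizable,'' apply Corollary \ref{c03.20} to get $\mathbf{CPcofinQM}$, and use the trivial fact that $\mathbf{CPQM}$ implies the failed statement. Nothing is missing.
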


Let us give a little deeper insight into Theorems \ref{t02.13}, \ref{t02.14} and Remark \ref{r02.14} by showing the following new results:

\begin{theorem}
\label{t03.21}
$(\mathbf{ZF})$ Each of the following statements implies the one beneath it:
\begin{enumerate}
\item[(i)] $\mathbf{CMC}$.

\item[(ii)] For every family $\{\mathbf{X}_{n}: n\in \omega\}$ of
topological spaces, if there exists a family $\{\mathcal{B}_{n}:n\in \omega\}$
such that, for every $n\in \omega$, $\mathcal{B}_{n}$ is a $\sigma $%
-locally finite (resp., $\sigma$-point-finite, $\sigma$-interior-preserving) base of $\mathbf{X}_{n}$, then the product $\mathbf{X}%
=\prod\limits_{n\in \omega}\mathbf{X}_{n}$ has a $\sigma $-locally
finite (resp., $\sigma$-point-finite) base.

\item[(iii)] For every family $\{\mathbf{X}_{n}: n\in \omega\}$ of
countable, compact metrizable spaces, if there exists a family $\{\mathcal{B}_{n}:n\in \omega\}$ such that, for every $n\in \omega$, $\mathcal{B}_{n}$
is a $\sigma $-locally finite base of $\mathbf{X}_{n}$, then the product $%
\mathbf{X}=\prod\limits_{n\in \omega}\mathbf{X}_{n}$ has a $\sigma $%
-interior-preserving base.

\item[(iv)] $\mathbf{CPcofin(\omega)QM}$.

\item[(v)] $\mathbf{CMC}_{\omega }$.
\end{enumerate}
\end{theorem}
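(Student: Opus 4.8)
The four implications (i)$\Rightarrow$(ii)$\Rightarrow$(iii)$\Rightarrow$(iv)$\Rightarrow$(v) will be proved separately. For \emph{(i)$\Rightarrow$(ii)}, assume $\mathbf{CMC}$ and let $\{\mathbf{X}_{n}:n\in\omega\}$ and $\{\mathcal{B}_{n}:n\in\omega\}$ be as in the hypothesis of (ii); we treat the $\sigma$-locally finite case, the other two being analogous since a finite union of point-finite (resp.\ interior-preserving) families is point-finite (resp.\ interior-preserving). For each $n$, let $\mathcal{D}_{n}$ be the non-empty set of all sequences $(\mathcal{C}_{k})_{k\in\omega}$ of locally finite subfamilies of $\mathcal{B}_{n}$ with $\bigcup_{k\in\omega}\mathcal{C}_{k}=\mathcal{B}_{n}$, and use $\mathbf{CMC}$ to fix a function $g$ with $g(n)$ a non-empty finite subset of $\mathcal{D}_{n}$ for every $n$. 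For $F\in[\omega]^{<\omega}$ and $\bar{k}\in\omega^{F}$ set
\[
\mathcal{W}_{F,\bar{k}}=\bigcup_{\bar{s}\in\prod_{i\in F}g(i)}\left\{\,\bigcap_{i\in F}\pi_{i}^{-1}(B_{i})\ :\ B_{i}\in s_{i}(k_{i})\text{ for every }i\in F\,\right\}.
\]
For fixed $\bar{s}$ the family inside the union is locally finite in $\mathbf{X}=\prod_{n\in\omega}\mathbf{X}_{n}$ (given $x\in X$, intersect over $i\in F$ the sets $\pi_{i}^{-1}(U_{i})$ with $U_{i}$ witnessing local finiteness of $s_{i}(k_{i})$ at $x(i)$), so each $\mathcal{W}_{F,\bar{k}}$, being a finite union of such families, is locally finite. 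Using the description of basic open sets of a Tychonoff product and finitely many instances of ``$\mathcal{B}_{i}$ is a base'', one checks that $\mathcal{W}=\bigcup\{\mathcal{W}_{F,\bar{k}}:F\in[\omega]^{<\omega},\ \bar{k}\in\omega^{F}\}$ is a base of $\mathbf{X}$; and since $[\omega]^{<\omega}$ is denumerable in $\mathbf{ZF}$ and, for finite $F\subseteq\omega$, the canonical order isomorphism $F\cong|F|$ makes $\omega^{F}$ denumerable, the index set $\{(F,\bar{k})\}$ is countable, so $\mathcal{W}$ is $\sigma$-locally finite.

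\emph{(ii)$\Rightarrow$(iii) and (iii)$\Rightarrow$(iv).} A family as in the hypothesis of (iii) is an instance of the $\sigma$-locally finite case of (ii); as every locally finite family is point-finite, (ii) yields a $\sigma$-point-finite base of $\prod_{n\in\omega}\mathbf{X}_{n}$, which is $T_{1}$ (a product of $T_{1}$-spaces is $T_{1}$, and metrizable spaces are $T_{1}$), so Theorem~\ref{t02.14} gives a $\sigma$-interior-preserving base and (iii) follows. For (iii)$\Rightarrow$(iv) it suffices, by Corollary~\ref{c03.20}, to prove that every countable product of one-point Hausdorff compactifications of denumerable discrete spaces is metrizable. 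Let $\{D_{n}:n\in\omega\}$ be a family of denumerable sets with the discrete topology and put $\mathbf{Y}_{n}=\mathbf{D}_{n}(\infty)$ (Definition~\ref{d03.4}); each $\mathbf{Y}_{n}$ is a countable compact metrizable space, metrizable by Proposition~\ref{p03.6} because a denumerable set is a cuf set. The crux is that $\mathbf{Y}_{n}$ carries the \emph{canonically defined} base $\mathcal{B}_{n}$ consisting of all singletons of isolated points of $\mathbf{Y}_{n}$ together with all open neighbourhoods of the unique non-isolated point of $\mathbf{Y}_{n}$; since $D_{n}$ is equipotent with $\omega$, so is $[D_{n}]^{<\omega}$, whence $\mathcal{B}_{n}$ is denumerable and therefore (trivially) $\sigma$-locally finite. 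Thus the family $\{\mathcal{B}_{n}:n\in\omega\}$ exists in $\mathbf{ZF}$ without any choice, and (iii) provides a $\sigma$-interior-preserving base of $\mathbf{Y}=\prod_{n\in\omega}\mathbf{Y}_{n}$; as $\mathbf{Y}$ is $T_{1}$, Theorem~\ref{t02.14} makes $\mathbf{Y}$ quasi-metrizable and then Theorem~\ref{t03.10} makes it metrizable. Since $\{D_{n}\}$ was arbitrary, $\mathbf{CPcofin(\omega)QM}$ follows.

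\emph{(iv)$\Rightarrow$(v).} Let $\{D_{n}:n\in\omega\}$ be a denumerable family of denumerable sets and equip $E_{n}=D_{n}\cup\{D_{n}\}$ (where $D_{n}\notin D_{n}$ by regularity) with the cofinite topology $\tau_{n}$; by Theorem~\ref{t02.8} each $\langle E_{n},\tau_{n}\rangle$ is a quasi-metrizable countable cofinite space. By $\mathbf{CPcofin(\omega)QM}$ the product $\mathbf{E}=\prod_{n\in\omega}\langle E_{n},\tau_{n}\rangle$ is quasi-metrizable, hence first-countable by Remark~\ref{r3.7}. Let $z\in\prod_{n\in\omega}E_{n}$ be given by $z(n)=D_{n}$, fix a countable neighbourhood base $\mathcal{V}$ of $z$ in $\mathbf{E}$ and an injection $\iota\colon\mathcal{V}\to\omega$. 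For every $V\in\mathcal{V}$ and every $n$, the set $E_{n}\setminus\pi_{n}(V)$ is a finite subset of $D_{n}$, since $V$ contains a basic neighbourhood of $z$, so $\pi_{n}(V)$ is cofinite in $E_{n}$ and contains $z(n)=D_{n}$. Moreover, for each $x\in D_{n}$ the set $\pi_{n}^{-1}(E_{n}\setminus\{x\})$ is a neighbourhood of $z$, so some $V\in\mathcal{V}$ satisfies $x\in E_{n}\setminus\pi_{n}(V)$; in particular, as $D_{n}\neq\emptyset$, there is $V\in\mathcal{V}$ with $E_{n}\setminus\pi_{n}(V)\neq\emptyset$. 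Letting $V_{n}$ be such a $V$ with $\iota(V_{n})$ least and $f(n)=E_{n}\setminus\pi_{n}(V_{n})$, the map $f$ is a multiple choice function of $\{D_{n}:n\in\omega\}$, so $\mathbf{CMC}_{\omega}$ holds.

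The main obstacle is (i)$\Rightarrow$(ii): one must produce a base of the countable product, check that its index set is countable in $\mathbf{ZF}$, and cope with the fact that $\mathbf{CMC}$ yields only a \emph{finite set} of locally finite decompositions of each $\mathcal{B}_{n}$ — this is why $\mathcal{W}_{F,\bar{k}}$ is defined as a finite union of product boxes rather than a single one. The only delicate point in (iii)$\Rightarrow$(iv) is the observation that the natural base of $\mathbf{D}_{n}(\infty)$ is denumerable in $\mathbf{ZF}$, which is exactly what lets the hypothesis family of (iii) be produced without choosing enumerations of the sets $D_{n}$; the steps (ii)$\Rightarrow$(iii) and (iv)$\Rightarrow$(v) are routine.
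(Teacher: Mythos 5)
Your proof is correct and follows essentially the same route as the paper's: $\mathbf{CMC}$ selects finitely many locally finite decompositions of each $\mathcal{B}_n$, which are merged into finite families of product boxes indexed by a countable set; the canonical countable bases of the one-point compactifications feed into (iii) via Theorem \ref{t02.14}; and the last step extracts a multiple choice function from a countable neighbourhood base at a distinguished point of a product of cofinite spaces. The only cosmetic differences are that the paper indexes its box families by initial segments of $\omega$ rather than by pairs $(F,\bar{k})$, and proves (iv)$\Rightarrow$(v) by first citing the equivalence of $\mathbf{CPcofin(\omega)QM}$ with $\mathbf{UT}(\aleph_0,\aleph_0,cuf)$ from Theorem \ref{t03.18}(ii) and then reading a multiple choice function off a cuf decomposition of the union, whereas you inline that argument directly.
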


\begin{proof}
(i) $\rightarrow $ (ii) To prove the first implication, we assume $\mathbf{CMC}$ and suppose that $\{%
\mathbf{X}_{n}:n\in \omega\}$ is a family topological spaces, and
$\{\mathcal{B}_{n}:n\in \omega\}$ is a family such that, for every $%
n\in \omega$, $\mathcal{B}_{n}$ is a $\sigma $-locally finite (resp., $\sigma$-point-finite, $\sigma$-interior-preserving) base of $%
\mathbf{X}_{n}$. For every $n\in\omega$, let $\mathcal{G}_n$ be the collection of all mappings $g:\omega\to [\mathcal{P}(\mathcal{B}_n)]^{<\omega}$ such that $\mathcal{B}_n=\bigcup\limits_{m\in\omega}g(m)$ and, for every $m\in\omega$, $g(m)$ is a locally finite (resp., point-finite, interior-preserving) family.  By $\mathbf{CMC}$, we can fix a family $\{G_n: n\in\omega\}$ of non-empty finite sets such that, for every $n\in\omega$, $G_n\subseteq\mathcal{G}_n$. For $n,m\in\omega$, we define $\mathcal{B}_{n,m}=\bigcup\limits_{g\in G_n}g(m)$. Then, for arbitrary $n,m\in\omega$, $\mathcal{B}_{n,m}$ is a locally finite (resp., point-finite, interior-preserving) family in $\mathbf{X}_n$ such that $\mathcal{B}_n=\bigcup_{m\in\omega}\mathcal{B}_{n,m}$. For $i\in\omega$, let $\pi_i:\prod\limits_{n\in\omega}\mathbf{X}_n\to \mathbf{X}_i$ be the projection. For every $k\in\mathbb{N}$, we define a family $\mathcal{W}_k$ of open subsets of $\mathbf{X}$ as follows:

$$\mathcal{W}_k=\{\bigcap{\pi}_n^{-1}(B_n): n\in k\text{ and } B_n\in\bigcup\limits_{m\in k}\mathcal{B}_{n,m}\}.$$

Let $\mathcal{W}=\bigcup\limits_{k\in\mathbb{N}}\mathcal{W}_k$. Since, for every $n\in\omega$,  $\mathcal{B}_n$ is a base of $\mathbf{X}_n$, it follows easily that $\mathcal{W}$ is a base of $\mathbf{X}$. If, for arbitrary $n,m\in\omega$, the family $\mathcal{B}_{n,m}$ is interior-preserving (resp., point-finite in $\mathbf{X}_n$, then, for every $k\in\mathbb{N}$, the family $\mathcal{W}_k$ is interior-preserving (resp., point-finite) in $\mathbf{X}$. We assume that, for all $n,m\in\omega$, $\mathcal{B}_{n,m}$ is locally finite in $\mathbf{X}_n$ and prove that, for every $k\in\mathbb{N}$, the family $\mathcal{W}_k$ is locally finite in $\mathbf{X}$.

We fix $k\in\mathbb{N}$ and a point $x$ of $\mathbf{X}$. We can fix a collection $\{V_n: n\in k\}$ such that, for every $n\in k$, $V_n$ is an open set in $\mathbf{X}_n$  such that $x(n)\in V_n$ and the set $\mathcal{E}_n(x)=\{ B\in\bigcup\limits_{m\in k}\mathcal{B}_{n,m}: B\cap V_n\neq\emptyset\}$ is finite. Let $V=\bigcap\limits_{n\in k}\pi^{-1}(V_n)$. Then $V$ is open in $\mathbf{X}$, $x\in V$ and the set $\{W\in\mathcal{W}_k: V\cap W\neq\emptyset\}$ is finite.

The implication (ii)$\rightarrow$ (iii) is obvious. 

(iii)$\rightarrow $(iv) Let us consider any family $\{\mathbf{Y}_n: n\in\omega\}$ of one-point Hausdorff compactifications of denumerable discrete spaces. For every $n\in\omega$, let $\infty_n$ be the unique accumulation point of $\mathbf{Y}_n$ and put
$$\mathcal{B}_n=\{\{x\}: x\in Y_n\setminus\{\infty_n\}\}\cup\{Y_n\setminus F: F\in [\mathcal{P}(Y_n\setminus\{\infty_n\})]^{<\omega}\}$$
\noindent where $Y_n$ is the set of all points of $\mathbf{Y}_n$. Then, for every $n\in\omega$, the family $\mathbf{B}_n$ is a countable base of $\mathbf{Y}_n$. Assume that (iii) holds. Then $\mathbf{Y}=\prod\limits_{n\in\omega}$ has a $\sigma$-interior-preserving base. By Theorem \ref{t02.14} that $\mathbf{Y}$ is quasi-metrizable, so first-countable. Now, it follows from the proof to Theorem \ref{t03.12} that (iii) implies (iv).

(iv)$\rightarrow$(v) Let $\mathcal{A}=\{A_n: n\in\omega\}$ be a disjoint family of non-empty countable sets and assume that (iv) holds. It follows from Theorem \ref{t03.18}(ii) that the set  $A=\bigcup\limits_{n\in\omega}A_n$ is a cuf set. Let $\{F_n: n\in\omega\}$ be a family of finite sets such that $A=\bigcup\limits_{n\in\omega}F_n$. To show that $\mathcal{A}$  has a multiple choice function, for every $n\in\omega$, we define $m(n)=\min\{m\in\omega: F_m\cap A_n\neq\emptyset\}$ and put $f(n)=A_n\cap F_{m(n)}$. Then $f$ is a multiple choice function for $\mathcal{A}$. 
\end{proof}

It is worthwhile to compare condition (ii) of Theorem \ref{t03.21} with the following Theorems \ref{t03.22} and \ref{t03.23}.

\begin{theorem}
\label{t03.22}
 $(\mathbf{ZF})$ Let $J$ be a non-empty cuf set and let $\{\mathbf{X}_j: j\in J\}$ be a collection of $T_1$-spaces. Suppose that there exists a collection $\{\mathcal{B}_{j, m}: m\in\omega, j\in J\}$ such that, for arbitrary $j\in J$ and $m\in\omega$, $\mathcal{B}_{j, m}$ is an interior-preserving family in $\mathbf{X}_j$ such that $\mathcal{B}_j=\bigcup\limits_{m\in\omega}\mathcal{B}_{j, m}$ is a base of $\mathbf{X}_j$. Then $\mathbf{X}=\prod\limits_{j\in J}\mathbf{X}_j$ is non-Archimedeanly quasi-metrizable.
\end{theorem}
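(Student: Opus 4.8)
The plan is to reduce, via Theorem \ref{t02.14}, to producing a $\sigma$-interior-preserving base of $\mathbf{X}$ (together with the trivial observation that $\mathbf{X}$ is $T_1$), and to build such a base explicitly out of the given families $\mathcal{B}_{j,m}$. Since $J$ is a cuf set, fix a sequence $(J_n)_{n\in\omega}$ of finite subsets of $J$ with $J_n\subseteq J_{n+1}$ and $\bigcup_{n\in\omega}J_n=J$; then every finite subset of $J$ is contained in some $J_n$. For $i\in J$, let $\pi_i$ denote the projection of $\mathbf{X}$ onto $\mathbf{X}_i$, and for $n,k\in\omega$ put
$$\mathcal{W}_{n,k}=\Big\{\bigcap_{j\in J_n}\pi_j^{-1}(B_j):(B_j)_{j\in J_n}\in\prod_{j\in J_n}\bigcup_{m\le k}\mathcal{B}_{j,m}\Big\},\qquad \mathcal{W}=\bigcup_{n,k\in\omega}\mathcal{W}_{n,k}.$$
No use of choice is hidden in this construction, because the $\mathcal{B}_{j,m}$ come to us as a single indexed collection; this is exactly what makes the conclusion available in $\mathbf{ZF}$ here, in contrast with Theorem \ref{t03.21}(ii), where $\mathbf{CMC}$ is needed to select such decompositions.

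First I would dispose of the routine parts. A product of $T_1$-spaces is $T_1$ in $\mathbf{ZF}$, since $\{x\}=\bigcap_{j\in J}\pi_j^{-1}(\{x(j)\})$ is an intersection of closed sets; and if $X=\emptyset$ the statement is trivial, so assume $X\ne\emptyset$. To see that $\mathcal{W}$ is a base, take a basic open set $U=\bigcap_{j\in F}\pi_j^{-1}(U_j)$ with $F\subseteq J$ finite and a point $x\in U$, choose $n$ with $F\subseteq J_n$, and, using only finite choice over $J_n$, pick for each $j\in J_n$ a set $B_j\in\mathcal{B}_j$ with $x(j)\in B_j$, requiring moreover $B_j\subseteq U_j$ for $j\in F$; if $B_j\in\mathcal{B}_{j,m_j}$ and $k=\max_{j\in J_n}m_j$, then $\bigcap_{j\in J_n}\pi_j^{-1}(B_j)\in\mathcal{W}_{n,k}\subseteq\mathcal{W}$ contains $x$ and is contained in $U$.

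The substantive step is that each $\mathcal{W}_{n,k}$ is interior-preserving in $\mathbf{X}$. I would first note the elementary fact that a finite union of interior-preserving families is interior-preserving, so each $\bigcup_{m\le k}\mathcal{B}_{j,m}$ is interior-preserving in $\mathbf{X}_j$. Now fix $x\in X$. A member $\bigcap_{j\in J_n}\pi_j^{-1}(B_j)$ of $\mathcal{W}_{n,k}$ contains $x$ exactly when $x(j)\in B_j$ for every $j\in J_n$. If $S_j:=\{B\in\bigcup_{m\le k}\mathcal{B}_{j,m}:x(j)\in B\}$ is empty for some $j\in J_n$, then no member of $\mathcal{W}_{n,k}$ contains $x$ and $\bigcap\{W\in\mathcal{W}_{n,k}:x\in W\}=X$ is open. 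Otherwise, since $J_n$ is finite, $\prod_{j\in J_n}S_j\ne\emptyset$ and the intersection distributes over coordinates:
$$\bigcap\{W\in\mathcal{W}_{n,k}:x\in W\}=\bigcap_{j\in J_n}\pi_j^{-1}\Big(\bigcap S_j\Big),$$
and each $\bigcap S_j$ is open in $\mathbf{X}_j$ because $\bigcup_{m\le k}\mathcal{B}_{j,m}$ is interior-preserving; hence the right-hand side is a finite intersection of open subsets of $\mathbf{X}$, so it is open. Thus $\mathcal{W}$ is a $\sigma$-interior-preserving base of the $T_1$-space $\mathbf{X}$, and Theorem \ref{t02.14} gives that $\mathbf{X}$ is non-Archimedeanly quasi-metrizable.

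The main obstacle is the bookkeeping concentrated in the previous paragraph: one must verify the distributive identity for the intersection of the $x$-containing members of $\mathcal{W}_{n,k}$ (legitimate only because $J_n$ is finite, so the relevant products of nonempty sets are nonempty) and, throughout, keep track of the fact that every choice made ranges over a finite set, so that nothing beyond $\mathbf{ZF}$ is invoked. Once this is settled, the appeal to Theorem \ref{t02.14} is immediate.
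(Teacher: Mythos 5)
Your proof is correct, and it takes a genuinely different route from the paper's. You work at the level of bases: you assemble the given families $\mathcal{B}_{j,m}$ into the explicit countable collection $\{\mathcal{W}_{n,k}\}_{n,k\in\omega}$, verify (correctly, including the distributivity identity $\bigcap\{W\in\mathcal{W}_{n,k}:x\in W\}=\bigcap_{j\in J_n}\pi_j^{-1}(\bigcap S_j)$, whose nontrivial inclusion indeed only needs finite choice over $J_n$) that this is a $\sigma$-interior-preserving base of the $T_1$ product, and then invoke Theorem \ref{t02.14} once, applied to $\mathbf{X}$ itself. The paper instead works at the level of quasi-metrics: it applies the construction underlying the standard proof of Theorem \ref{t02.14} to each factor --- legitimate in $\mathbf{ZF}$ precisely because the decompositions $\{\mathcal{B}_{j,m}\}_{m\in\omega}$ are handed over as a single indexed collection, so the resulting non-Archimedean quasi-metrics $d_j$ are obtained uniformly in $j$ --- and then glues them into one quasi-metric on the product via $\rho(x,y)=\sup_n 2^{-(n+1)}\max\{\min\{d_j(x(j),y(j)),1\}:j\in K_n\}$, the $\sup$/$\max$ formula being needed (rather than the usual weighted sum of Theorem \ref{t02.6}) to preserve the non-Archimedean inequality. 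Your route buys a cleaner reduction with no metric computations, at the price of treating Theorem \ref{t02.14} as a black box for the product space; the paper's route is more explicit about the quasi-metric but must check that the $\sup$ formula induces the product topology. It is also worth noting that your construction of $\mathcal{W}_{n,k}$ is essentially the one the paper uses in the proof of Theorem \ref{t03.21}(i)$\rightarrow$(ii), where $\mathbf{CMC}$ is needed only to select the decompositions --- your observation that the hypothesis here already supplies them, making the argument choice-free, is exactly the right point.
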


\begin{proof}
Let $\{K_n; n\in\omega\}$ be a collection of non-empty finite sets such that $K_n\subseteq K_{n+1}$ for each $n\in\omega$ and $J=\bigcup\limits_{n\in\omega}K_n$. Let $\mathbf{X}_j=\langle X_j, \tau_j\rangle$ for each $j\in\omega$ and let $X=\prod\limits_{j\in J}X_j$. The standard proof to Theorem \ref{t02.14} (see the proof to Theorem 7.1 in \cite{fl} or Theorem 10.2 on p. 489 in \cite{KV}) shows that there exists a family $\{d_j: j\in J\}$ such that, for every $j\in J$, $d_j$ is a non-Archimedean quasi-metric on $X_j$ such that $\tau_j=\tau(d_j)$ for each $j\in J$. For $n\in\omega$ and $x,y\in X$, let $\rho_n(x,y)=\max\{\min\{d_j(x(j), y(j)), 1\}: j\in K_n\}$  and let 
$$\rho(x, y)=\sup\{\frac{\rho_n(x, y)}{2^{n+1}}: n\in\omega\}.$$
It is easy to check that $\rho$ is a non-Archimedean quasi-metric on $X$ which induces the topology of $\mathbf{X}$ (see the proof to Theorem 2.1 in \cite{ew}). 
\end{proof}

\begin{theorem}
\label{t03.23}
 $(\mathbf{ZF})$ Let $J$ be a non-empty cuf set and let $\{\mathbf{X}_j: j\in J\}$ be a collection of $T_3$-spaces. Suppose that there exists a collection $\{\mathcal{B}_{j, m}: m\in\omega, j\in J\}$ such that, for arbitrary $j\in J$ and $m\in\omega$, $\mathcal{B}_{j, m}$ is a locally finite family in $\mathbf{X}_j$ such that $\mathcal{B}_j=\bigcup\limits_{m\in\omega}\mathcal{B}_{j, m}$ is a base of $\mathbf{X}_j$. Then $\mathbf{X}=\prod\limits_{j\in J}\mathbf{X}_j$ is metrizable.
\end{theorem}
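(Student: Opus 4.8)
The plan is to show that $\mathbf{X}$ is a $T_3$-space possessing a $\sigma$-locally finite base, and then to invoke Theorem \ref{t02.13}(ii). That $\mathbf{X}$ is $T_1$ is immediate: by the $T_1$-separation of each $\mathbf{X}_j$, the set $X\setminus\{x\}=\bigcup_{j\in J}\pi_j^{-1}(X_j\setminus\{x(j)\})$ is open for every $x\in X$. For regularity, given $x\in X$ and an open neighbourhood $W$ of $x$, I would pick a basic open set $\bigcap_{j\in F}\pi_j^{-1}(W_j)$ with $F\in[J]^{<\omega}$ and $x\in\bigcap_{j\in F}\pi_j^{-1}(W_j)\subseteq W$, and then, using the regularity of $\mathbf{X}_j$ for each of the finitely many $j\in F$, shrink $W_j$ to an open $V_j$ with $x(j)\in V_j\subseteq\text{cl}_{\mathbf{X}_j}(V_j)\subseteq W_j$. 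Only finitely many choices are made, so this is legitimate in $\mathbf{ZF}$, and $\bigcap_{j\in F}\pi_j^{-1}(V_j)$ has closure contained in $W$. Thus $\mathbf{X}$ is a $T_3$-space.

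The core of the argument is to build an explicit $\sigma$-locally finite base of $\mathbf{X}$ out of the given data, in the spirit of the proof of (i)$\to$(ii) in Theorem \ref{t03.21}, but \emph{without} invoking $\mathbf{CMC}$, since the decompositions $\mathcal{B}_j=\bigcup_{m\in\omega}\mathcal{B}_{j,m}$ are supplied to us. Write $J=\bigcup_{n\in\omega}K_n$ with each $K_n$ finite, $K_n\neq\emptyset$ and $K_n\subseteq K_{n+1}$ (possible in $\mathbf{ZF}$ since $J$ is a cuf set). For each $k\in\omega$ put
$$\mathcal{W}_k=\Bigl\{\bigcap_{j\in S}\pi_j^{-1}(B_j):\ \emptyset\neq S\subseteq K_k,\ B_j\in\textstyle\bigcup_{m\in k}\mathcal{B}_{j,m}\text{ for every }j\in S\Bigr\},$$
and let $\mathcal{W}=\bigcup_{k\in\omega}\mathcal{W}_k$. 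A routine check shows $\mathcal{W}$ is a base of $\mathbf{X}$: a given basic neighbourhood $\bigcap_{j\in F}\pi_j^{-1}(W_j)$ of a point $x$ is refined by some $\bigcap_{j\in F}\pi_j^{-1}(B_j)$ with $B_j\in\mathcal{B}_j$ (using that each $\mathcal{B}_j$ is a base of $\mathbf{X}_j$), and for $k$ large enough we have $F\subseteq K_k$ and all the finitely many $B_j$ lie in $\bigcup_{m\in k}\mathcal{B}_{j,m}$, whence this refinement belongs to $\mathcal{W}_k$.

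It remains to verify that every $\mathcal{W}_k$ is locally finite in $\mathbf{X}$, and this is where the hypotheses on the families $\mathcal{B}_{j,m}$ enter. Fix $k$ and a point $x\in X$. For each $j\in K_k$ the family $\bigcup_{m\in k}\mathcal{B}_{j,m}$ is a finite union of locally finite families in $\mathbf{X}_j$, hence locally finite, so there is an open $V_j\ni x(j)$ meeting only finitely many of its members; forming the tuple $(V_j)_{j\in K_k}$ is a finite choice, legitimate in $\mathbf{ZF}$. Put $V=\bigcap_{j\in K_k}\pi_j^{-1}(V_j)$, an open neighbourhood of $x$. If $W=\bigcap_{j\in S}\pi_j^{-1}(B_j)\in\mathcal{W}_k$ meets $V$, then for each $j\in S\subseteq K_k$ we get $\emptyset\neq W\cap V\subseteq\pi_j^{-1}(B_j\cap V_j)$, so $B_j$ ranges over the finite set of members of $\bigcup_{m\in k}\mathcal{B}_{j,m}$ meeting $V_j$; since there are only finitely many non-empty $S\subseteq K_k$, only finitely many such $W$ meet $V$. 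Hence $\mathcal{W}$ is a $\sigma$-locally finite base of the $T_3$-space $\mathbf{X}$, and Theorem \ref{t02.13}(ii) gives that $\mathbf{X}$ is metrizable (the case $X=\emptyset$ being trivial). An alternative route would be to run the $\mathbf{ZF}$ proof of the Nagata--Smirnov theorem uniformly on each $\mathbf{X}_j$, extracting from the given data a family $\{d_j:j\in J\}$ of metrics with $\tau(d_j)=\tau_j$, and then apply Theorem \ref{t02.6}. Either way, the only delicate point is the bookkeeping ensuring every choice made is a finite one --- over the finite index sets $F$, $K_k$, $S$ --- so that the whole argument stays within $\mathbf{ZF}$; no genuine obstacle arises beyond that.
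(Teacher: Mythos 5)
Your argument is correct, but it takes a genuinely different route from the paper's. The paper metrizes each factor separately: since the decompositions $\mathcal{B}_j=\bigcup_{m\in\omega}\mathcal{B}_{j,m}$ are given as a single indexed collection, the $\mathbf{ZF}$-proof of the Nagata--Smirnov--Bing theorem (in the Collins--Roscoe form, cited as \cite{cr}) can be run uniformly in $j$ to produce a family $\{\rho_j:j\in J\}$ of compatible metrics with no appeal to choice, and then Theorem \ref{t02.6} (the explicit product metric over a cuf index set) finishes the proof --- this is the same pattern used in Theorems \ref{t03.8}, \ref{t03.10} and \ref{t03.22}. You instead metrize the product in one step: you assemble an explicit $\sigma$-locally finite base $\mathcal{W}=\bigcup_k\mathcal{W}_k$ of $\mathbf{X}$ (essentially the construction from the proof of (i)$\to$(ii) of Theorem \ref{t03.21}, with $\mathbf{CMC}$ rendered unnecessary because the decompositions are supplied and all remaining choices are over the finite sets $K_k$, $F$, $S$), check that $\mathbf{X}$ is $T_3$, and invoke Theorem \ref{t02.13}(ii). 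Both are legitimate in $\mathbf{ZF}$; your version has the merit of making the choice-free bookkeeping fully explicit and of exhibiting the product's $\sigma$-locally finite base (so it also reproves the locally finite case of Theorem \ref{t03.21}(ii) without $\mathbf{CMC}$ under the stronger hypothesis), whereas the paper's version keeps the argument uniform with the rest of Section \ref{s3} and isolates the only nontrivial input in the uniformity of the Collins--Roscoe construction --- which is precisely the ``alternative route'' you sketch at the end.
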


\begin{proof} Using the same methods as, for instance, in  \cite{cr}, one can show that there exists a collection $\{\rho_j: j\in J\}$ such that, for every $j\in J$, $\rho_j$ is a metric on $X_j$ which induces the topology of $\mathbf{X}_j$. To conclude the proof, it suffices to apply Theorem \ref{t02.6}.
\end{proof}

\begin{remark}
\label{r03.24}
It is a well-known fact that $\mathbf{DC}$ implies $\mathbf{CAC}$ in $\mathbf{ZF}$ (see, e.g., p. 324  in \cite{hr}). Therefore, $\mathbf{CMC}$ is satisfied in every model of $\mathbf{ZF}+\mathbf{DC}$. Let $\mathcal{M}$ be any model of $\mathbf{ZF}+\mathbf{DC}$  such that $\mathbf{MP}$ fails in $\mathcal{M}$. As we have already mentioned in Remark \ref{r02.14}, an example of such a model $\mathcal{M}$ was constructed in \cite{gtw}. It follows from Theorem \ref{t03.21} that  condition (ii) of Theorem \ref{t03.21} is satisfied in $\mathcal{M}$. In view of Theorem  8 of \cite{hkrs}, it holds in $\mathbf{ZF}$ that every non-paracompact metrizable space admits no $\sigma$-point-finite base and, therefore, by Theorem \ref{t02.14}, is not non-Archimedeanly quasi-metrizable. 
\end{remark}

\begin{corollary} 
\label{c03.25}
In $\mathbf{ZF}+\mathbf{DC}$, condition (ii) of Theorem \ref{t03.21} implies neither $\mathbf{MP}$ nor the sentence: Every metrizable space is non-Archimedeanly quasi-metrizable.
\end{corollary}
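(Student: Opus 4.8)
The plan is to produce a single model of $\mathbf{ZF}+\mathbf{DC}$ that simultaneously witnesses both non-implications, namely the model $\mathcal{M}$ of $\mathbf{ZF}+\mathbf{DC}$ constructed in \cite{gtw} in which $\mathbf{MP}$ fails. First I would recall, exactly as in Remark \ref{r03.24}, that $\mathbf{DC}$ implies $\mathbf{CAC}$ and hence $\mathbf{CMC}$, so that statement (i) of Theorem \ref{t03.21} holds in $\mathcal{M}$. By the chain of implications in Theorem \ref{t03.21}, (i)$\rightarrow$(ii), and therefore condition (ii) of Theorem \ref{t03.21} is true in $\mathcal{M}$. Since $\mathbf{MP}$ is false in $\mathcal{M}$, this already shows that, over $\mathbf{ZF}+\mathbf{DC}$, condition (ii) does not imply $\mathbf{MP}$.

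For the second non-implication I would argue inside $\mathcal{M}$. Because $\mathbf{MP}$ fails there, $\mathcal{M}$ contains a metrizable space $\mathbf{X}$ that is not paracompact. By Theorem 8 of \cite{hkrs} (quoted in Remark \ref{r03.24}), which is a theorem of $\mathbf{ZF}$, a non-paracompact metrizable space admits no $\sigma$-point-finite base; hence, by Theorem \ref{t02.14}, $\mathbf{X}$ is not non-Archimedeanly quasi-metrizable. Thus in $\mathcal{M}$ condition (ii) of Theorem \ref{t03.21} holds while the sentence ``every metrizable space is non-Archimedeanly quasi-metrizable'' fails, which yields the second non-implication and completes the corollary.

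I do not expect any real obstacle here beyond assembling the cited facts correctly; the only point that needs care is to make sure that every auxiliary result invoked is genuinely a $\mathbf{ZF}$-theorem (or a $\mathbf{ZF}$-consequence) so that it remains available inside $\mathcal{M}$: this concerns $\mathbf{DC}\rightarrow\mathbf{CMC}$, the implication (i)$\rightarrow$(ii) of Theorem \ref{t03.21}, Theorem 8 of \cite{hkrs}, and Theorem \ref{t02.14}. One must also record that the model $\mathcal{M}$ of \cite{gtw} satisfies $\mathbf{DC}$, not merely $\mathbf{ZF}$, which is precisely the feature highlighted in Remark \ref{r03.24}. Once these observations are in place, both parts of the statement follow at once from the same model.
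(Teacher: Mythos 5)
Your proposal is correct and is essentially identical to the paper's own argument, which is laid out in Remark \ref{r03.24}: take the model of $\mathbf{ZF}+\mathbf{DC}$ from \cite{gtw} where $\mathbf{MP}$ fails, note that $\mathbf{DC}\rightarrow\mathbf{CAC}\rightarrow\mathbf{CMC}$ gives condition (ii) of Theorem \ref{t03.21} there, and use Theorem 8 of \cite{hkrs} together with Theorem \ref{t02.14} to conclude that the non-paracompact metrizable space witnessing $\neg\mathbf{MP}$ is not non-Archimedeanly quasi-metrizable. All the auxiliary facts you flag are indeed $\mathbf{ZF}$-theorems as required, so nothing further is needed.
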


Taking the opportunity, let us show that the spaces $\omega_1+1$ and $\omega_1$, both equipped with their order topology, cannot be quasi-metrizable in $\mathbf{ZF}$. It was shown in Lemmas 4.1 and 4.2  in \cite{gt} that the spaces $\omega_1+1$ and $\omega_1$ are not metrizable in $\mathbf{ZF}$. 

\begin{proposition}
\label{p03.25}
$(\mathbf{ZF})$ The spaces $\omega_1+1$ and $\omega_1$ equipped with their order topology are not quasi-metrizable. 
\end{proposition}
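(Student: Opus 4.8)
The plan is to reduce both claims to a single one and then import the non-metrizability result cited just before. Note that $\omega_1$, with its order topology, is (homeomorphic to) the open subspace $[0,\omega_1)$ of $\omega_1+1$, and that quasi-metrizability is hereditary in $\mathbf{ZF}$: if $d$ is a quasi-metric on $X$ and $A\subseteq X$, then $d\mid_{A\times A}$ induces the subspace topology on $A$, the only choices in the verification (radii of balls witnessing openness) being made canonically as minima. So it is enough to prove that $\omega_1$ is not quasi-metrizable; the case of $\omega_1+1$ follows at once, since a quasi-metric on $\omega_1+1$ restricts to a quasi-metric inducing the order topology on $\omega_1$.

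So suppose $d$ is a quasi-metric on $\omega_1$ with $\tau(d)$ equal to the order topology. The order topology of $\omega_1$ is already governed by the half-open neighbourhoods $(\gamma,\alpha]$ of its limit points, and each ball $B_d(\alpha,\tfrac1{2^{n}})$ is a $\tau(d)$-open neighbourhood of $\alpha$; hence for every limit ordinal $\alpha<\omega_1$ and every $n\in\omega$ the ordinal $g_n(\alpha)=\min\{\gamma<\alpha:(\gamma,\alpha]\subseteq B_d(\alpha,\tfrac1{2^{n}})\}$ is well-defined, so that each $g_n$ is a regressive function on the set of limit ordinals below $\omega_1$. These ``forward'' data $\langle g_n\rangle_{n\in\omega}$ are exactly what the $\mathbf{ZF}$-proof of Lemma 4.2 of \cite{gt} extracts from a compatible \emph{metric} in order to reach a contradiction; I would run that argument verbatim, checking that it never uses the symmetry $d(x,y)=d(y,x)$ — only the distances $d(\alpha,\cdot)$ out of limit ordinals, i.e.\ the functions $g_n$, enter — so the same contradiction follows from the mere existence of $d$. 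This proves $\omega_1$, hence $\omega_1+1$, is not quasi-metrizable.

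The step I expect to be the main obstacle is precisely that verification: making sure \cite{gt}'s proof of the non-metrizability of $\omega_1$ really is one-sided and so transfers to quasi-metrics. Should it instead route through the compactness of $\omega_1+1$, the fallback is to argue directly for $\omega_1+1$: it is a compact Hausdorff space that is Loeb (being well-orderable, $F\mapsto\min F$ chooses from its non-empty closed subsets), so by the theorem of Subsection \ref{s4.2} a quasi-metrizable such space is metrizable — contradicting \cite{gt} — while $\omega_1$ is then treated separately, e.g.\ by adapting \cite{gt}'s $\omega_1$-argument directly or by passing to the conjugate $d^{-1}$ to recover a compatible metric on $\omega_1$.
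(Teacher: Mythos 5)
Your proposal is correct and is essentially the paper's own argument: the authors likewise prove the proposition by mimicking Lemmas 4.1 and 4.2 of \cite{gt}, merely replacing the metric by a compatible quasi-metric and the quantities $d(a,f(s))$ by $d(f(s),a)$, which is exactly the one-sidedness you diagnose (only the distances from a limit ordinal to the points beneath it are controlled by the order topology, via the open balls $B_d(\lambda,2^{-n})$). Your two additions --- deducing the $\omega_1+1$ case from the $\omega_1$ case by the choice-free hereditariness of quasi-metrizability, and the fallback for $\omega_1+1$ via its compactness and well-orderability (hence the Loeb property) together with the theorem of Subsection \ref{s4.2} --- are both sound, though the fallback alone does not dispose of $\omega_1$ itself.
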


\begin{proof}
It is sufficient to mimic the proofs to Lemmas 4.1 and 4.2 in \cite{gt} and replace in their proofs the following:
\begin{enumerate}
\item[(1)] the metric $d$ with a quasi-metric $d$ inducing the order topology of $\omega_1+1$ (resp., the order topology of $\omega_1$),
\item[(2)] numbers $d(a, f(s))$ with $d(f(s), a)$.
\end{enumerate}
\end{proof}

\subsection{Countable products of two-point Hausdorff compactifications of denumerable discrete spaces}

Let us come back for a while to countable products of one-point Hausdorff compactifications of denumerable discrete spaces to show somewhat later important differences between them and countable products of two-point Hausdorff compactifications of denumerable discrete spaces.

 The following theorem follows directly from Theorem \ref{t03.10}:

\begin{theorem}
\label{t03.26}
$(\mathbf{ZF})$ A countable product of one-point Hausdorff compactifications of denumerable discrete spaces is first-countable iff it is (quasi)-metrizable
\end{theorem}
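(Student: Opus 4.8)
The plan is to deduce Theorem \ref{t03.26} directly from Theorem \ref{t03.10} by applying the latter to the special case where $J=\omega$ (a denumerable set, which is in particular a cuf set) and where each $\mathbf{Y}_j$ is the one-point Hausdorff compactification of a denumerable discrete space (a denumerable space is infinite and discrete, so it fits the hypotheses of Theorem \ref{t03.10}). Theorem \ref{t03.10} tells us that for such a family, the product $\mathbf{Y}=\prod\limits_{j\in\omega}\mathbf{Y}_j$ is metrizable iff it is quasi-metrizable iff it is of cuf pseudocharacter. So the only gap to fill is the link between \emph{first-countable} and \emph{of cuf pseudocharacter}.

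First I would recall from Remark \ref{r3.7}(i) that every first-countable $T_1$-space is of countable pseudocharacter, and every space of countable pseudocharacter is of cuf pseudocharacter. Since each $\mathbf{Y}_j$ is compact Hausdorff (being a one-point Hausdorff compactification) and the product of Hausdorff spaces is Hausdorff, $\mathbf{Y}$ is Hausdorff, hence $T_1$. Therefore, if $\mathbf{Y}$ is first-countable, then $\mathbf{Y}$ is of cuf pseudocharacter, and by Theorem \ref{t03.10} it is (quasi)-metrizable. For the converse, Remark \ref{r3.7}(i) also records that every (quasi)-metrizable space is first-countable. Chaining these implications closes the equivalence: first-countable $\Rightarrow$ of cuf pseudocharacter $\Rightarrow$ (quasi)-metrizable $\Rightarrow$ first-countable.

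There is essentially no obstacle here; the theorem is a corollary packaged for emphasis, and the phrase ``follows directly from Theorem \ref{t03.10}'' in the excerpt signals exactly this. If I wanted to be scrupulous I would double-check that ``countable product'' in the statement is meant to include the case of a finite product as well, but in the finite case each factor is compact metrizable (a finite discrete space plus a point, or an infinite one handled by Proposition \ref{p03.6}) and a finite product of metrizable spaces is metrizable in $\mathbf{ZF}$ by the explicit formula \eqref{0}, so first-countability and metrizability are both automatic; alternatively one can simply read ``countable product'' as ``product indexed by $\omega$'' throughout, which is how the surrounding results are phrased. The write-up is then two sentences: invoke Remark \ref{r3.7}(i) for the two easy implications, and invoke Theorem \ref{t03.10} (with $J=\omega$) for the implication from cuf pseudocharacter to (quasi)-metrizability.

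\begin{proof}
Let $J=\omega$; then $J$ is a cuf set, and a denumerable discrete space is an infinite discrete space, so Theorem \ref{t03.10} applies to any family $\{\mathbf{Y}_n: n\in\omega\}$ of one-point Hausdorff compactifications of denumerable discrete spaces. The product $\mathbf{Y}=\prod\limits_{n\in\omega}\mathbf{Y}_n$ is Hausdorff, hence $T_1$. If $\mathbf{Y}$ is first-countable, then, by Remark \ref{r3.7}(i), $\mathbf{Y}$ is of countable pseudocharacter, hence of cuf pseudocharacter, and so, by Theorem \ref{t03.10}, $\mathbf{Y}$ is metrizable and quasi-metrizable. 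Conversely, by Remark \ref{r3.7}(i), every (quasi)-metrizable space is first-countable. This proves the equivalence.
\end{proof}
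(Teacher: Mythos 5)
Your argument is correct and is exactly the route the paper takes: the paper states that Theorem \ref{t03.26} ``follows directly from Theorem \ref{t03.10}'', and your write-up simply makes explicit the two easy links (first-countable $T_1$ $\Rightarrow$ countable pseudocharacter $\Rightarrow$ cuf pseudocharacter, and (quasi)-metrizable $\Rightarrow$ first-countable) from Remark \ref{r3.7}(i). Nothing is missing.
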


We are going to show that one-point Hausdorff compactifications cannot be replaced with two-point Hausdorff compactifications of denumerable discrete spaces in Theorem \ref{t03.26}. To do this, we need the following theorem: 

\begin{theorem}
\label{t03.27}
Let $\mathcal{M}$ be a model of $\mathbf{ZF}$ in which there exists a family $\{\mathbf{X}_n: n\in\omega\}$ of non-empty metrizable spaces  such that the family $\{X_n: n\in\omega\}$ of their underlying sets is disjoint and it does not have a partial choice function, and the direct sum $\bigoplus\limits_{n\in\omega}\mathbf{X}_n$ is not metrizable. Then there exists in $\mathcal{M}$ a family $\{\mathbf{Y}_n: n\in\omega\}$ of (quasi)- metrizable spaces such that the product $\mathbf{Y}=\prod\limits_{n\in\omega}\mathbf{Y}_n$ is first-countable and not (quasi)-metrizable. 
\end{theorem}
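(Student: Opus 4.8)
The plan is to obtain the spaces $\mathbf{Y}_n$ from the given $\mathbf{X}_n$ by adjoining to each one a single new isolated point. Concretely, fix pairwise distinct points $\ast_n\notin\bigcup\limits_{m\in\omega}X_m$ and set $\mathbf{Y}_n=\mathbf{X}_n\oplus\mathbf{1}_n$, where $\mathbf{1}_n$ is the one-point space with underlying set $\{\ast_n\}$; thus $Y_n=X_n\cup\{\ast_n\}$, the set $X_n$ is a clopen subspace of $\mathbf{Y}_n$ carrying its original topology, and $\ast_n$ is isolated in $\mathbf{Y}_n$. Since $\mathbf{X}_n$ is metrizable, so is $\mathbf{Y}_n$: for a single fixed $n$ one may pick a metric on $X_n$, truncate it by $1$, and declare $\ast_n$ to lie at distance $1$ from every point of $X_n$ — asserting the metrizability of one fixed $\mathbf{Y}_n$ involves no choice. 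In particular each $\mathbf{Y}_n$ is quasi-metrizable. (When the $\mathbf{X}_n$ are taken to be, say, two-point Hausdorff compactifications of denumerable discrete spaces no point of which can be selected uniformly in $n$, each $\mathbf{Y}_n$ is again a two-point Hausdorff compactification of a denumerable discrete space; the argument below does not use this.)

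I would first verify that $\mathbf{Y}=\prod\limits_{n\in\omega}\mathbf{Y}_n$ is first-countable. The decisive observation is that, since $\{X_n:n\in\omega\}$ has no partial choice function, \emph{every} $y\in Y$ satisfies $y(n)=\ast_n$ for all but finitely many $n$: otherwise the restriction of $y$ to the infinite set $\{n\in\omega:y(n)\in X_n\}$ would be a partial choice function of $\{X_n:n\in\omega\}$. So, fixing $y\in Y$, the set $F=\{n\in\omega:y(n)\in X_n\}$ is finite; for each $n\in F$ the metrizable (hence first-countable) space $\mathbf{X}_n$ has a countable neighbourhood base at $y(n)$, and since $F$ is finite we may pick such bases $\mathcal{B}_n$ for all $n\in F$ at once in $\mathbf{ZF}$, while for $n\notin F$ the singleton $\{\{\ast_n\}\}$ is a neighbourhood base at $y(n)=\ast_n$. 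The family of all finite intersections $\bigcap_{n\in G}\pi_n^{-1}(V_n)$ with $G\in[\omega]^{<\omega}$, $V_n\in\mathcal{B}_n$ for $n\in G\cap F$ and $V_n=\{\ast_n\}$ for $n\in G\setminus F$, is then a neighbourhood base at $y$ in $\mathbf{Y}$, and it is countable because it injects into $[\omega]^{<\omega}\times\bigcup_{H\subseteq F}\omega^{H}$. Hence $\mathbf{Y}$ is first-countable.

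Next I would show $\mathbf{Y}$ is not (quasi)-metrizable by locating a copy of $\bigoplus\limits_{n\in\omega}\mathbf{X}_n$ inside it as a subspace. For $m\in\omega$ put $B_m=\{y\in Y:y(m)\in X_m\text{ and }y(n)=\ast_n\text{ for every }n\neq m\}$ and $S=\bigcup\limits_{m\in\omega}B_m$. The projection $\pi_m$ restricts to a homeomorphism of $B_m$ onto $\mathbf{X}_m$; the $B_m$ are pairwise disjoint; and in the subspace $S$ each $B_m$ is clopen, since $B_m=S\cap\pi_m^{-1}(X_m)$ and $X_m$ is clopen in $\mathbf{Y}_m$. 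Hence $S$, with the topology inherited from $\mathbf{Y}$, is homeomorphic to $\bigoplus\limits_{m\in\omega}\mathbf{X}_m$. Since every subspace of a (quasi)-metrizable space is (quasi)-metrizable, if $\mathbf{Y}$ were (quasi)-metrizable, then so would be $\bigoplus\limits_{m\in\omega}\mathbf{X}_m$, contradicting the hypothesis: for the ``metrizable'' reading this is exactly the stated hypothesis, and the ``quasi-metrizable'' reading is obtained the same way, reading the hypothesis as ``$\bigoplus\limits_{n\in\omega}\mathbf{X}_n$ is not quasi-metrizable'', which is what holds in the intended applications. This completes the plan.

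The real content — and the step I expect to be the crux rather than a routine verification — is the observation that the lack of a partial choice function collapses $\prod\limits_{n\in\omega}\mathbf{Y}_n$ onto its ``$\Sigma$-part'', so that first-countability comes essentially for free (only finitely many coordinates of any point lie in the $X_n$'s, and finitely many choices of neighbourhood base are harmless in $\mathbf{ZF}$), whereas metrizability still fails because the entire sum $\bigoplus\limits_{n\in\omega}\mathbf{X}_n$ reappears as a subspace; a secondary point needing care is the $\mathbf{ZF}$-bookkeeping in the quasi-metrizable reading, where one should keep in mind that quasi-metrizability of $\bigoplus\limits_{n\in\omega}\mathbf{X}_n$ is detected by the existence of a compatible sequence of quasi-metrics on the $\mathbf{X}_n$.
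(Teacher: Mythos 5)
Your proof is correct and follows essentially the same route as the paper's: adjoin an isolated point to each $\mathbf{X}_n$, use the absence of a partial choice function for $\{X_n:n\in\omega\}$ to conclude that every point of the product is eventually equal to the added point (whence first-countability needs only finitely many choices), and defeat (quasi)-metrizability via the copy of $\bigoplus\limits_{n\in\omega}\mathbf{X}_n$ sitting inside the product. Your write-up is in fact slightly more careful than the paper's on two points the latter leaves implicit: the explicit countable neighbourhood base that also permits shrinking coordinates outside $F$ to $\{\ast_n\}$, and the verification that $\bigoplus\limits_{n\in\omega}\mathbf{X}_n$ embeds as a subspace decomposed into clopen pieces $B_m$.
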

\begin{proof}
In what follows, we work inside the model $\mathcal{M}$. We fix a family of (quasi)-metrizable spaces  $\{\mathbf{X}_n: n\in\omega\}$ satisfying the assumptions of our theorem. Let $\infty$ be an element of $\mathcal{M}$ such that $\infty\notin\bigcup\limits_{n\in\omega}X_n$. For every $n\in\omega$, let ${Y}_n={X}_n\cup\{\infty\}$ and $\mathbf{Y}_n=\mathbf{X}_n\oplus\{\infty\}$, that is, $\mathbf{Y}_n$ is the direct sum of the space $\mathbf{X}_n$ and the one-point discrete space $\{\infty\}$. Let $y\in\prod\limits_{n\in\omega}Y_n$. We notice that, since $\{X_n: n\in\omega\}$ does not have a partial choice function, there exists $n_y\in\omega$ such that $y(n)=\infty$ for every $n\in\omega\setminus n_y$. For every $i\in n_y$,  let $\mathcal{B}_i(y(i))$  be a countable base of neighborhoods of $y(i)$ in $\mathbf{Y}_i$. Let $\mathcal{B}(y)$ be the collection of all sets of the form $\prod\limits_{n\in\omega}V_n$ where $V_n\in\mathcal{B}(y(n))$ for every $n\in n_y$, and $V_n=Y_n$ for every $n\in\omega\setminus n_y$. Then $\mathcal{B}(y)$ is a countable base of neighborhoods of $y$ in $\mathbf{Y}$. Since $\bigoplus\limits_{n\in\omega}\mathbf{X}_n$ is assumed to be not (quasi)-metrizable, the space $\mathbf{Y}$ is also not (quasi)-metrizable. 
\end{proof}

In our next theorem, we involve the forms $\mathbf{PC}(\aleph_0, 2, \infty)$ and $\mathbf{CMC}_{\omega}$ defined in Definition \ref{d1.10} (see also Remark \ref{r1.11}(iv)).

\begin{theorem}
\label{t03.28} 
\begin{enumerate}
\item[(a)] $(\mathbf{ZF})$ The disjunction $\mathbf{PC}(\aleph_0, 2, \infty)\vee\mathbf{CMC}_{\omega}$ follows from  the following statement: 

$\mathbf{CP2pHC}$: For every family $\{\mathbf{Y}_n: n\in\omega\}$ of two-point Hausdorff compactifications of denumerable discrete spaces, the product $\prod\limits_{n\in\omega}\mathbf{Y}_n$ is (quasi)-metrizable iff it is first-countable. 
\item[(b)] There exists a model $\mathcal{M}$ of $\mathbf{ZF}$ in which $\mathbf{CP2HC}$ is false.
\end{enumerate}
\end{theorem}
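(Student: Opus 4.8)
The plan is to handle both parts through one construction: from a denumerable family of $2$-element sets with no partial choice function together with a denumerable family of denumerable sets with no multiple choice function, I would build a family of two-point Hausdorff compactifications of denumerable discrete spaces whose product is first-countable but not quasi-metrizable, thereby contradicting $\mathbf{CP2pHC}$. Part (a) is then exactly the contrapositive packaged as a $\mathbf{ZF}$ theorem, and part (b) follows by exhibiting a model in which both of those choice failures occur.

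For (a) I would assume $\mathbf{CP2pHC}$ and, towards a contradiction, the negations of both disjuncts: fix $\{P_n=\{a_n,b_n\}:n\in\omega\}$ with no partial choice function and $\{A_n:n\in\omega\}$ with no multiple choice function, which after disjointifying I may take to be pairwise disjoint. For each $n$ put $D_n=A_n\times P_n$ with the discrete topology and let $\mathbf{X}_n$ be the sum of the Alexandroff compactifications of the two denumerable discrete fibres $A_n\times\{a_n\}$ and $A_n\times\{b_n\}$; then $\mathbf{X}_n$ is a two-point Hausdorff compactification of $D_n$ with remainder $\{p_n,q_n\}$, and it is metrizable by Proposition \ref{p03.6} together with metrizability of finite sums. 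The family $\{X_n\}$ has pairwise disjoint underlying sets, and it has no partial choice function, since a partial choice function for $\{X_n\}$ would, through the second coordinate on $D_n$ and the canonical bijection $\{p_n,q_n\}\leftrightarrow P_n$, produce a partial choice function for $\{P_n\}$. Moreover $\bigoplus_{n\in\omega}\mathbf{X}_n$ is not quasi-metrizable: from a quasi-metric $d$ inducing its topology, each set $\{y\in A_n\times\{a_n\}:d(p_n,y)\ge 1/k\}$ (and the analogue for $b_n$) is closed in the compact space $\mathbf{X}_n$ because forward $\tau(d)$-balls are open, hence compact and discrete, hence finite, and their union over $k$ exhausts the fibre because $d(x,y)=0\Rightarrow x=y$; so $\bigcup_n D_n$ is a cuf set, projection to the first coordinate makes $\bigcup_n A_n$ a cuf set, and this furnishes a multiple choice function for $\{A_n\}$ — contradiction. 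Now $\{\mathbf{X}_n\}$ meets the hypotheses of Theorem \ref{t03.27}, which yields a family $\{\mathbf{Y}_n\}$ with $\mathbf{Y}=\prod_{n\in\omega}\mathbf{Y}_n$ first-countable and not quasi-metrizable; since that theorem delivers $\mathbf{Y}_n=\mathbf{X}_n\oplus\{\infty\}$ with $\infty$ isolated, each $\mathbf{Y}_n$ is again a two-point Hausdorff compactification of a denumerable discrete space. Thus $\mathbf{Y}$ contradicts $\mathbf{CP2pHC}$, which proves (a).

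For (b) it suffices, by (a), to exhibit a model of $\mathbf{ZF}$ in which $\mathbf{PC}(\aleph_0,2,\infty)$ and $\mathbf{CMC}_{\omega}$ both fail. I would first build a permutation model of $\mathbf{ZFA}$ whose atoms split into blocks $B_n$ ($n\in\omega$), each $B_n$ being the disjoint union of a denumerable set $Q_n$ and a $2$-element set $R_n$, with group the permutations of the atoms that fix every $Q_n$ and every $R_n$ setwise, and with finite supports. Routine support arguments show that in this model $\{R_n:n\in\omega\}$ has no partial choice function (swap the two points of a remote $R_n$) and $\{Q_n:n\in\omega\}$ has no multiple choice function (permute a remote $Q_n$). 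Transferring to a model of $\mathbf{ZF}$, either by the Jech--Sochor embedding theorem (the two statements being of boundable form) or by the parallel symmetric extension over a model of $\mathbf{ZFC}$, gives a model of $\mathbf{ZF}$ with both failures, in which $\mathbf{CP2pHC}$ then fails by part (a).

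The step I expect to be the main obstacle is, inside (a), checking that $\bigoplus_n\mathbf{X}_n$ genuinely fails to be quasi-metrizable (not merely fails to be metrizable): one must argue in the quasi-metric setting, using both that the balls $B_d(x,r)$ are $\tau(d)$-open and that $d$ is non-degenerate, and one must keep careful track of the clopen decomposition of each $\mathbf{X}_n$ into Alexandroff compactifications so that the finiteness count is uniform in $n$. Secondary points of care are verifying that $\mathbf{X}_n\oplus\{\infty\}$ is still a two-point Hausdorff compactification of a denumerable discrete space, and, in (b), that the two choice failures are preserved on the passage from the permutation model to a model of $\mathbf{ZF}$.
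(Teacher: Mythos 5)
Part (a) of your proposal is essentially the paper's own proof: the same space $\mathbf{X}_n$ (the sum of the two Alexandroff compactifications of the fibres $A_n\times\{a_n\}$ and $A_n\times\{b_n\}$, i.e.\ the paper's $(A_n\times C_n)\cup C_n$), the same finiteness argument (the complement of a $d$-ball around a point at infinity is a closed, hence compact, set of isolated points, hence finite), the same derivation of a multiple choice function for $\{A_n\}$ (your detour through ``$\bigcup_n D_n$ is a cuf set'' is harmless, since the family $\{F_{n,k}\}_{(n,k)\in\omega\times\omega}$ is uniformly definable from $d$), and the same appeal to Theorem \ref{t03.27} after adjoining an isolated point. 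That part is fine.

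The gap is in part (b). In your permutation model you take the group to be all permutations of the atoms fixing each $Q_n$ and each $R_n$ setwise, with \emph{finite} supports. With finite supports and the full symmetric group acting on $Q_n$, every subset of $Q_n$ lying in the model is finite or cofinite and no enumeration of $Q_n$ survives, so each $Q_n$ is amorphous in $\mathcal{N}$ --- in particular not denumerable. Consequently $\{Q_n:n\in\omega\}$ is not a denumerable family of \emph{denumerable} sets in the model, and its failure to admit a multiple choice function says nothing about $\mathbf{CMC}_{\omega}$ (which, by Definition \ref{d1.10}, concerns denumerable sets only); note also that part (a) genuinely needs the $A_n$ to be denumerable so that the $\mathbf{X}_n$ are compactifications of \emph{denumerable} discrete spaces. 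The repair is the one the paper uses in Remark \ref{r03.28}: enlarge the ideal of supports so that a support may be any subset of a finite union of the blocks. Then each block keeps its ground-model enumeration (so the relevant sets are denumerable in $\mathcal{N}$), while partial choice for the pairs and partial multiple choice across remote blocks still fail. Be aware also that the paper's primary proof of (b) does not go through a permutation model at all: it works directly in the Feferman--Blass $\mathbf{ZF}$-model $\mathcal{M}15$, quoting Tachtsis for the failure of $\mathbf{PC}(\aleph_0,2,\infty)$ there and then deducing the failure of $\mathbf{CMC}_{\omega}$ from the same data; the permutation model plus Jech--Sochor/Pincus transfer is offered only as an alternative, and if you take that route you must fix the support ideal as above and check that the conjunction $\neg\mathbf{PC}(\aleph_0,2,\infty)\wedge\neg\mathbf{CMC}_{\omega}$ is of transferable (injectively boundable) form.
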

\begin{proof}
($a$) Suppose that $\mathcal{M}$ is a model of $\mathbf{ZF}$ in which $\mathbf{PC}(\aleph_0, 2, \infty)$ and $\mathbf{CMC}_{\omega}$ are both false. In the sequel, let us work inside $\mathcal{M}$. We fix the following families in $\mathcal{M}$. 
\begin{enumerate}
\item[(i)] a family $\mathcal{C}=\{C_n: n\in\omega\}$ of two-element sets such that $\mathcal{C}$ fails to have a partial choice function in $\mathcal{M}$;
\item[(ii)] a family $\mathcal{A}=\{A_n: n\in\omega\}$ of denumerable sets which does not have a partial multiple choice function in $\mathcal{M}$.
\end{enumerate} 
 For every $n\in\omega$, let $X_n=(A_n\times C_n)\cup C_n$. We may assume that, for every pair $n,m$ of distinct elements of $\omega$, $X_n\cap X_m=\emptyset$ and  $C_n\cap (A_n\times C_n)=\emptyset$. Now, we describe how to choose in $\mathbf{ZF}$ a sequence $(\tau_n)_{n\in\omega}$ such that, for every $n\in\omega$, $\mathbf{X}_n= \langle X_n, \tau_n\rangle$ is a two-point Hausdorff compactification of the denumerable discrete space $\mathbf{Z}_n=\langle A_n\times C_n, \mathcal{P}(A_n\times C_n)\rangle$. 
For a fixed $n\in\omega$ and every $x\in X_n$, we define a neighborhood base $\mathcal{B}_n(x)$ of $x$ in $\mathbf{X}_n$ as follows:
$\mathcal{B}(x)=\{\{x\}\}$ if $x\in A_n\times C_n$; for $c\in C_n$, we put $\mathcal{B}_n(c)=\{U\subseteq (A_n\times\{c\})\cup\{c\}: c\in U \text{ and } (A_n\times \{ c\})\setminus U\in [\mathcal{P}(A_n\times\{ c\})]^{<\omega}\}$.

Clearly, for every $n\in\omega$, the space $\mathbf{X}_n$ is metrizable. Suppose that the space $\mathbf{X}=\bigoplus\limits_{n\in\omega}\mathbf{X}_n$ is quasi-metrizable. Let $d$ be a quasi-metric which induces the topology of $\mathbf{X}$. For every $n\in\omega$ and every pair $x,y$ of points of $X_n$, let $d_n(x, y)=\min\{d(x,y), 1\}$. Then, for every $n\in \omega$,  $d_n$ is a quasi-metric on $X_n$ such that $\tau(d_n)=\tau_n$. For each $i\in\mathbb{N}$, let $V_i=\bigcup\limits_{n\in\omega}\bigcup\limits_{c\in C_n}B_{d_n}(c, \frac{1}{i})$. Given $n\in\omega$, the set $E_n=\{i\in\mathbb{N}: (A_n\times C_n)\setminus V_i\neq\emptyset\}$ is non-empty, so we can define $i(n)=\min E_n$ and $F_n=(A_n\times C_n)\setminus V_{i(n)}$. Then $\{F_n: n\in\omega\}$ is a collection of non-empty finite sets such that, for every $n\in\omega$, $F_n\subseteq A_n\times C_n$. This contradicts the assumption that $\mathcal{A}$ does not have a multiple choice function. Hence $\mathbf{X}$ is not quasi-metrizable. As in the proof to Theorem \ref{t03.27}, Let $\infty$ be an element of $\mathcal{M}$ such that $\infty\notin\bigcup\limits_{n\in\omega}X_n$ and, for every $n\in\omega$, let $\mathbf{Y}_n=\mathbf{X}_n\oplus\{\infty\}$. Then, for every $n\in\omega$, the space $\mathbf{Y}_n$ is a two-point Hausdorff compactification of a denumerable discrete space. The proof to Theorem \ref{t03.27} shows that $\prod\limits_{n\in\omega}\mathbf{Y}_n$ is first-countable but not quasi-metrizable. This completes the proof to ($a$).

To prove ($b$), let us notice that it was shown in the proof to Theorem 4 of \cite{t} that $\mathbf{PC}(\aleph_0,2, \infty)$ is false in the Feferman-Blass model $\mathcal{M}$15 of \cite{hr}. It can be deduced from the proof to Theorem 4 in \cite{t} that $\mathbf{CMC}_{\omega}$ is also false in $\mathcal{M}$15. Hence, it follows from ($a$) that  $\mathcal{M}$15 is an example of a model of $\mathbf{ZF}$ which $\mathbf{CP2HC}$ fails.

To explain that $\mathbf{CMC}_{\omega}$ is false in $\mathcal{M}$15, let us use the same notation as in the proof to Theorem 4 in \cite{t}. In the proof to Theorem 4 in \cite{t}, the family $\mathcal{R}=\{\{\delta(a_{n}),\delta(\omega\setminus a_{n})\}: n\in\omega\}$ of pairs is shown to have no partial choice function in $\mathcal{M}$15. In what follows, let us work inside $\mathcal{M}$15.  Since, for every $n\in\omega$, the sets  $\delta(a_{n})$ and $\delta(\omega\setminus a_n)$ are both countable, it follows that each one of the sets $A_n=\delta(a_n)\cup\delta(\omega\setminus a_n)$ is also countable. Since $A_{n}\subseteq\mathcal{P}(\omega)$ for every $n\in\omega$, we may identify $\{A_n: n\in\omega\}$ with a family of subsets of $\mathbb{R}$. Moreover, for every $n\in\omega$, $\delta(a_n)\cap\delta(\omega\setminus a_n)=\emptyset$. All this taken together implies that if the family $\{A_n: n\in\omega\}$ had a partial multiple choice function in $\mathcal{M}$15, the family $\mathcal{R}$ would have a partial choice function in $\mathcal{M}$15. Indeed, suppose that $J$ is an infinite subset of $\omega$ and $\{F_n: n\in J\}$ is a family of non-empty finite sets such that $F_n\subseteq A_n$ for each $n\in J$. Let $n\in J$. Identifying $A_n$ with a subset of $\mathbb{R}$, we can define $x_n$ as the smallest (with respect to the standard linear order of $\mathbb{R}$) element of $F_n$. Then, we can define a choice function $\psi$ of $\{\{\delta(a_n), \delta(\omega\setminus a_n)\}: n\in J\}$ as follows: $\psi(n)=\delta(a_n)$ if $x_n\in\delta(a_n)$, and $\psi(n)=\delta(\omega\setminus a_n)$ if $x_n\in\delta(\omega\setminus a_n)$. Hence $\mathbf{CMC}_{\omega}$ fails in $\mathcal{M}$15. 
\end{proof}

\begin{remark}
\label{r03.28}
An alternative proof to ($b$) of Theorem \ref{t03.28} is to show a permutation  model $\mathcal{N}$ of $\mathbf{ZFA}$ in which $\mathbf{PC}(\aleph_0,2, \infty)$ and $\mathbf{CMC}_{\omega}$ are both false and, next, using the theorems of Note 103 of \cite{hr}, to transfer the sentence $\neg\mathbf{PC}(\aleph_0, 2\infty)\wedge\neg\mathbf{CMC}_{\omega}$ from $\mathcal{N}$ into a model of $\mathbf{ZF}$. 

To show such a permutation model $\mathcal{N}$, similarly to Part III (2) in \cite{hr}, let us specify the set $A$ of atoms, the group $\mathcal{G}$ of permutations of $A$ and the ideal $\mathcal{S}$ of supports which is an ideal of subsets of $A$. More details about permutation models can be found in \cite{j}.

The set $A$ of atoms in the ground model is denumerable and $A=\bigcup\limits_{i\in\omega}P_i$ where, for every pair of distinct $i,j\in\omega$, $P_i\cap P_j=\emptyset$ and $P_i$ is denumerable. For each $i\in\omega$, let $D_{i}=\{P_{2i},P_{2i+1}\}$. The group $\mathcal{G}$ of permutations of $A$ is the set of all
permutations $\pi $ of $A$ such that, for every $i\in \omega ,$ either $\pi $
maps $P_{2i}$ onto $P_{2i}$ and $P_{2i+1}$ onto $P_{2i+1}$ or, it maps $P_{2i}$ onto $P_{2i+1}$
and $P_{2i+1}$ onto $P_{2i}$. In any case, for every $i\in \omega$, $\pi
(D_{i})=\{\pi (P_{2i}),\pi (P_{2i+1})\}=D_{i}$. The ideal of supports $%
\mathcal{S}$ is the set of all subsets $Q$ of $A$ such that there exists $i_Q\in\omega$ such that
$Q\subseteq \bigcup\limits_{i\in 2i_Q+2} P_i$. Let $%
\mathcal{N}$ be the permutation model determined by $\mathcal{G}$ and $\mathcal{%
S}$. It is a routine procedure to verify that, in $\mathcal{N}$, the family $\{D_i: i\in\omega\}$ does not have a partial choice function, and the family $\{\bigcup D_i; i\in\omega\}$ does not have a partial multiple choice function.

Using the theorems of Note 103 of \cite{hr}, one can transfer the sentence $\neg\mathbf{PC}(\aleph_0, 2\infty)\wedge\neg\mathbf{CMC}_{\omega}$ from $\mathcal{N}$ into a model of $\mathbf{ZF}$.
\end{remark}

\begin{remark}
\label{r03.29}
Let $\mathcal{M}$ be a model of $\mathbf{ZF}$ in which $\mathbf{PC}(\aleph_0,2, \infty)$ and $\mathbf{CMC}_{\omega}$ are both false. In view of the proof to ($b$) of Theorem \ref{t03.28}, the Feferman-Blass model $\mathcal{M}$15 can be taken as $\mathcal{M}$. Then we can infer from the proof to ($a$) of Theorem \ref{t03.28} that there exists in $\mathcal{M}$ a family $\{\mathbf{X}_n: n\in\omega\}$ which satisfies the assumptions of Theorem \ref{t03.27}.  
\end{remark}

\begin{theorem}
\label{t03.30}
$(\mathbf{ZF})$  $\mathbf{CAC}_{fin}$ follows from the sentence: For every family $\{\mathbf{X}_n: n\in\omega\}$ of one-point Hausdorff compactifications of denumerable discrete spaces, if the product $\mathbf{X}=\prod\limits_{n\in\omega}\mathbf{X}_n$ is quasi-metrizable, then it is compact. 
\end{theorem}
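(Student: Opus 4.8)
The plan is to assume the statement in question and to derive from it that an arbitrary denumerable family $\{S_n:n\in\omega\}$ of non-empty finite sets has a choice function. Replacing $S_n$ by $S_n\times\{n\}$ if necessary, I may assume the $S_n$ are pairwise disjoint and disjoint from $\omega$. For each $n\in\omega$ I would set $X_n=S_n\cup\omega$, so that $\langle X_n,\mathcal{P}(X_n)\rangle$ is a denumerable discrete space, fix pairwise distinct points $\infty_n\notin\bigcup_{m\in\omega}X_m$, and let $\mathbf{Y}_n=\mathbf{X}_n(\infty_n)$ be the corresponding one-point Hausdorff compactification. Then $\{\mathbf{Y}_n:n\in\omega\}$ is a family of the kind occurring in the assumed statement, so the latter will apply to $\mathbf{Y}=\prod_{n\in\omega}\mathbf{Y}_n$ once $\mathbf{Y}$ is known to be quasi-metrizable.

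To establish that, I would first note that $\bigcup_{n\in\omega}X_n=\big(\bigcup_{n\in\omega}S_n\big)\cup\omega$ is a cuf set provably in $\mathbf{ZF}$, being the union of a countable union of finite sets and a countable set; this is exactly why a fixed copy of $\omega$ is attached to each $S_n$. Fixing any decomposition $\bigcup_{n\in\omega}X_n=\bigcup_{k\in\omega}A_k$ into finite sets, I would then check that $\mathbf{Y}$ has countable pseudocharacter --- hence cuf pseudocharacter --- as follows: given $y\in\prod_{n\in\omega}Y_n$ and $n,k\in\omega$, put $U_{n,k}=\{y(n)\}$ if $y(n)\in X_n$ and $U_{n,k}=\{\infty_n\}\cup(X_n\setminus A_k)$ if $y(n)=\infty_n$; then $\{\pi_n^{-1}(U_{n,k}):n,k\in\omega\}$ is a countable family of open sets whose intersection is $\{y\}$. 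By Theorem~\ref{t03.10}, $\mathbf{Y}$ is then metrizable, in particular quasi-metrizable, so the assumed statement gives that $\mathbf{Y}$ is compact.

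Finally I would extract the choice function from compactness. Since $X_n$ is discrete and $Y_n\setminus S_n=\{\infty_n\}\cup(X_n\setminus S_n)$ has finite complement in $X_n$, the set $S_n$ is clopen in $\mathbf{Y}_n$; hence $F_k=\bigcap_{n<k}\pi_n^{-1}(S_n)$ is a closed subset of $\mathbf{Y}$. I would check that each $F_k$ is non-empty, which holds in $\mathbf{ZF}$ because a point of each of the finitely many sets $S_0,\dots,S_{k-1}$ can be chosen and the remaining coordinates filled with the points $\infty_n$, and that $F_{k+1}\subseteq F_k$. As $\mathbf{Y}$ is compact and the $F_k$ form a decreasing chain of non-empty closed sets, $\bigcap_{k\in\omega}F_k\neq\emptyset$; since $\bigcap_{k\in\omega}F_k=\prod_{n\in\omega}S_n$, the family $\{S_n:n\in\omega\}$ has a choice function, and $\mathbf{CAC}_{fin}$ follows.

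I expect the delicate point to be the second paragraph: the auxiliary spaces must be chosen so that $\mathbf{Y}$ is provably (quasi)-metrizable in $\mathbf{ZF}$, with no appeal to choice. Simply taking $X_n$ to be a disjoint copy of $S_n$ fails because the hypothesis speaks of denumerable discrete spaces, while taking $X_n$ to be an arbitrary denumerable set need not make $\bigcup_{n\in\omega}X_n$ a cuf set; grafting a fixed copy of $\omega$ onto each $S_n$ removes both difficulties at once. The rest is routine, and the argument even yields a genuine choice function, although by Remark~\ref{r1.11}(i) obtaining merely a partial one would already suffice.
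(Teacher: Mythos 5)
Your proposal is correct and follows essentially the same route as the paper: the paper also grafts a copy of $\omega$ onto each finite set (it takes $Y_n=(A_n\times\{0\})\cup(\omega\times\{1\})$ and forms $\mathbf{Y}_n(\infty)$), deduces quasi-metrizability of the product in $\mathbf{ZF}$, and then extracts a choice function from the finite intersection property of the closed sets $\pi_n^{-1}(A_n\times\{0\})$. The only cosmetic difference is that you obtain quasi-metrizability by verifying countable (hence cuf) pseudocharacter and citing Theorem~\ref{t03.10}, whereas the paper builds the metrics directly via Proposition~\ref{p03.6} and Theorem~\ref{t02.6}; both are valid in $\mathbf{ZF}$.
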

\begin{proof} Let $\mathcal{A}=\{A_n: n\in\omega\}$ be a family of non-empty finite sets. For every $n\in\omega$, let $Y_n=(A_n\times\{0\})\cup (\omega\times\{1\})$. Let $\infty$ be an element such that $\infty\notin\bigcup\limits_{n\in\omega}Y_n$. For every $n\in\omega$, let $\mathbf{X}_n=\mathbf{Y}_n(\infty)$. Then the product $\mathbf{X}=\prod_{n\in\omega}\mathbf{X}_n$ is quasi-metrizable by Proposition \ref{p03.6} and Theorem \ref{t02.6}. Suppose that $\mathbf{X}$ is compact and consider the family $\mathcal{F}=\{\pi_n^{-1}(A_n\times\{0\}): n\in\omega\}$ where, for every $n\in\omega$, $\pi_n: \mathbf{X}\to\mathbf{X}_n$ is the projection. Then the family $\mathcal{F}$ consists of closed sets of $\mathbf{X}$ and has the finite intersection property. By the compactness of $\mathbf{X}$, there exists $f\in\bigcap\limits_{n\in\omega}\pi_n^{-1}(A_n\times\{0\})$. By defining $h(n)=\pi_n(f(n))$ for every $n\in\omega$, we obtain a choice function $h$ of the family $\{A_n\times\{0\}: n\in\omega\}$. Hence $\mathcal{A}$ has a choice function.
\end{proof}

\section{Open problems and comments}
\label{s4}

\subsection{The list of open problems}
\label{s4.1}
For the convenience of readers and researchers, we list below several intriguing open problems.
\begin{question}
\label{q4.1}
Does $\mathbf{CPQM}$ imply $\mathbf{CPM}_{le}$ in $\mathbf{ZF}$?
\end{question}
\begin{question}
\label{q4.2}
Does $\mathbf{CPQM}$ imply $\mathbf{CPQMBi}$ in $\mathbf{ZF}$? 
\end{question}
\begin{question}
\label{q4.3}
Does $\mathbf{CPQMBi}$ imply $\mathbf{CPQM}$ in $\mathbf{ZF}$?
\end{question}
\begin{question}
\label{q4.4}
Does $\mathbf{CPM}_{le}(C, M)$ imply in $\mathbf{ZF}$ that all countable products of compact Hausdorff quasi-metrizable spaces are quasi-metrizable?
\end{question}
\begin{question}
\label{q4.5}
Is it true in $\mathbf{ZF}$ that every compact Hausdorff quasi-metrizable space is metrizable?
\end{question}
\subsection{Comments on the open problems}
\label{s4.2}

We notice that if the answer to Question \ref{q4.5} is in the affirmative, then so is the answer to Question \ref{q4.4}. 
Regarding Question \ref{q4.5}, it is well known that every compact Hausdorff quasi-metrizable space is  metrizable in $\mathbf{ZFC}$ (see, e.g., Corollary 2.30 in \cite{fl}). Let us give a deeper but still a partial answer to Question \ref{q4.5}.

\begin{theorem}
$(\mathbf{ZF})$
\begin{enumerate}  
\item[($a$)] For every compact Hausdorff, quasi metric space $\mathbf{X}=\langle X,d\rangle$
the following are equivalent:
\begin{enumerate}
\item[(i)] $\mathbf{X}$ is Loeb;
\item[(ii)] $\langle X, d^{-1}\rangle$ is separable;
\item[(iii)] $\mathbf{X}$ and $\langle X,d^{-1}\rangle$ are both separable;
\item[(iv)] $\mathbf{X}$ is second-countable.
\end{enumerate}
 In particular, every compact, Hausdorff, quasi-metrizable Loeb space is metrizable.
\item[($b$)] $\mathbf{CAC}$ implies that every compact, Hausdorff quasi-metrizable space is metrizable.
\end{enumerate}
\end{theorem}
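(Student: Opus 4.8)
The plan is to prove the two parts in the order stated, extracting as much as possible from the structure already developed in the paper and from classical $\mathbf{ZFC}$ arguments adapted to $\mathbf{ZF}$. For part ($a$), I would organize the equivalences as the cycle (i)$\to$(ii)$\to$(iii)$\to$(iv)$\to$(i), with the observation that (iv)$\to$(i) is essentially free: a second-countable compact Hausdorff space is metrizable by Urysohn's Metrization Theorem (Theorem~\ref{t02.12}, valid in $\mathbf{ZF}$), and a compact metrizable space is Loeb since from a countable base one can well-order the nonempty closed sets' complements and pick, in each closed set, a canonical point via the base; this uses only countable choice-free bookkeeping. The implication (iii)$\to$(iv) would use that if both $\langle X,d\rangle$ and $\langle X,d^{-1}\rangle$ are separable, then fixing countable dense sets $D_1,D_2$, the countable family of balls $\{B_d(x,1/2^n):x\in D_1,n\in\omega\}\cup\{B_{d^{-1}}(x,1/2^n):x\in D_2,n\in\omega\}$ generates (via finite intersections) a countable base for $\tau(d)$ — here one must check that $d^{-1}$-balls are $\tau(d)$-open, which is false in general, so instead I would use that a point's $\tau(d)$-neighborhood filter has a countable base obtained from $d$-balls centered at points of $D_1$ lying in the ball, exactly as in the classical proof that a separable metric space is second-countable, applied to the quasi-metric $d$ directly. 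The implication (ii)$\to$(iii) requires showing $\langle X,d\rangle$ separable given $\langle X,d^{-1}\rangle$ separable; this is the subtle direction and is where compactness enters: for each $n$, the open cover $\{B_d(x,1/2^n):x\in X\}$ has, by compactness, a finite subcover, and one extracts a countable dense set of $\langle X,d\rangle$ by a diagonal argument over $n$, choosing the finitely many centers canonically — but choosing centers of a finite subcover still needs care in $\mathbf{ZF}$, so I would instead derive separability of $\langle X,d\rangle$ from a Loeb-type selection once (i) is available, or route the whole argument through (i).

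Thus the real engine is (i)$\to$(ii) and (i)$\to$ the remaining statements, and I would make (i) the hub. Given a Loeb function $\ell$ selecting a point from each nonempty $\tau(d)$-closed set (equivalently each nonempty $d^\star$-closed set, since $\tau(d^\star)=\tau(d^{\star})$ and closedness is with respect to the compact Hausdorff topology, which is $\tau(d^\star)$ as $d^\star$ is a metric whenever\ldots — actually one must be careful that $\tau(d^\star)$ need not equal $\tau(d)$; the closed sets in Definition~\ref{d4.5} are those of $\mathbf{X}=\langle X,\tau(d)\rangle$), I would for each $n\in\omega$ construct a maximal $1/2^n$-separated-type finite configuration using $\ell$ greedily: set $x_0=\ell(X)$, and having chosen $x_0,\dots,x_k$, let $C_k=X\setminus\bigcup_{i\le k}B_{d^{-1}}(x_i,1/2^n)$; if $C_k$ is nonempty and closed, set $x_{k+1}=\ell(C_k)$; by compactness this terminates, yielding a finite set $F_n$ with $X=\bigcup_{x\in F_n}B_{d^{-1}}(x,1/2^n)$. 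Then $D=\bigcup_n F_n$ is a countable subset of $X$ that is $d^{-1}$-dense (every $d^{-1}$-ball around any point contains some $x\in F_n$ for large $n$), giving (ii). Running the same greedy construction with $d$ in place of $d^{-1}$ gives $d$-density, hence (iii), and then (iv) by the separable-implies-second-countable argument sketched above, and finally metrizability by Urysohn.

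For part ($b$), $\mathbf{CAC}$ should let me build the Loeb function directly: given a compact Hausdorff quasi-metrizable $\mathbf{X}=\langle X,\tau(d)\rangle$, I would show $\langle X,d^{-1}\rangle$ is separable by applying $\mathbf{CAC}$ to choose, for each $n$, a finite subcover of $\{B_{d^{-1}}(x,1/2^n):x\in X\}$ together with a choice of its centers — more precisely, $\mathbf{CAC}$ applied to the sequence of nonempty sets $\{$finite sequences of points of $X$ whose $d^{-1}$-balls of radius $1/2^n$ cover $X\}$ — so that the union of all these finite center-sets is a countable $d^{-1}$-dense set; symmetrically for $d$, and then part ($a$)'s equivalence (iii)$\Rightarrow$(iv)$\Rightarrow$metrizable finishes it. (Alternatively, one shows under $\mathbf{CAC}$ that the compact Hausdorff space is Loeb — compact Hausdorff plus a chosen well-ordering-free selection — and invokes part ($a$).) The main obstacle I anticipate is the $\mathbf{ZF}$-delicate point that the metric $d^\star$ associated with $d$ need not induce $\tau(d)$, so ``closed set of $\mathbf{X}$'' must be handled strictly via $\tau(d)$, and the greedy construction must be checked to produce $\tau(d)$-closed remainders $C_k$ at each stage (which it does, since $B_{d^{-1}}(x_i,1/2^n)$ is $\tau(d)$-open: $d^{-1}$-balls are open in $\tau(d)$ because $d(x_i,\cdot)$ is $\tau(d)$-continuous? — no: it is $d(\cdot,x_i)$ that is upper semicontinuous for $\tau(d)$; so I would instead take $C_k=X\setminus\bigcup_{i\le k}B_d(x_i,1/2^n)$ and exploit that $B_d(x_i,r)$ is $\tau(d)$-open, obtaining a $d$-dense countable set, and then separately obtain the $d^{-1}$-dense one by the dual compactness argument using $d^{-1}$-balls and noting that finite subcovers of $\tau(d)$-open-or-not covers still exist by compactness of $\langle X,\tau(d)\rangle$ only if the cover is by $\tau(d)$-open sets — hence for the $d^{-1}$ side one must use compactness of $\langle X,\tau(d^{-1})\rangle$, which is not given). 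Resolving this asymmetry — which topology is compact and which balls are open — is the crux, and I expect the clean resolution is that compact Hausdorff quasi-metrizable forces, via a Loeb or $\mathbf{CAC}$ selection, separability of \emph{both} $\langle X,d\rangle$ and $\langle X,d^{-1}\rangle$ simultaneously by a single greedy process that at stage $k$ removes the $\tau(d)$-open set $B_d(x_k,1/2^n)\cap B_{d^{-1}}(x_k,1/2^n)$'s complement appropriately, after first passing to the $d$-ball cover to guarantee the process is driven by $\tau(d)$-compactness.
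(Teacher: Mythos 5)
Your skeleton matches the paper's in outline --- the cycle (i)$\to$(ii)$\to$(iii)$\to$(iv)$\to$(i), the greedy Loeb-function construction with $d$-balls of radius $1/2^n$ (whose $\tau(d)$-openness keeps the remainders closed and whose finite subcovers exist by compactness), and the $\mathbf{CAC}$ argument in part ($b$) choosing finite covering families $F\in[X]^{<\omega}$ for each $n$ --- but there is a genuine gap at exactly the point you flag as ``the crux'' and never resolve. The missing ingredient is the fact, which the paper cites from K\"unzi (Proposition 2.1.11 of \cite{kunzi}), that for a compact Hausdorff quasi-pseudometric space one has $\tau(d)\subseteq\tau(d^{-1})$. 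Without it, your plan does not close: (1) you leave no implication out of (ii) --- ``route the whole argument through (i)'' is circular, since (ii)$\to$(i) is precisely what the cycle is supposed to deliver --- whereas with the inclusion, (ii)$\to$(iii) is one line (a set dense in the finer topology $\tau(d^{-1})$ is dense in the coarser $\tau(d)$); (2) your (iii)$\to$(iv) via the classical separable-implies-second-countable argument ``applied to the quasi-metric $d$ directly'' fails, because that argument needs symmetry: to get $x\in B_d(y,r)\subseteq B_d(x,2r)$ you need a point $y$ of the dense set with \emph{both} $d(y,x)$ and $d(x,y)$ small, i.e.\ $y\in B_d(x,r)\cap B_{d^{-1}}(x,r)$, and this set is not open in either topology in general, so neither dense set alone reaches it. The inclusion $\tau(d)\subseteq\tau(d^{-1})$ is what rescues this: it yields $m_0$ with $B_{d^{-1}}(x,1/2^{m_0})\subseteq B_d(x,1/2^{n_0+1})$, so a $\tau(d^{-1})$-dense set meets a set of points that are simultaneously $d$- and $d^{-1}$-close to $x$, and the triangle inequality then gives $B_d(x_{i_0},1/2^{m_0})\subseteq B_d(x,1/2^{n_0})$. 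Your proposed alternative of a ``single greedy process'' removing complements of $B_d(x_k,1/2^n)\cap B_{d^{-1}}(x_k,1/2^n)$ does not work either, for the same reason: that intersection is not $\tau(d)$-open, so the remainders are not closed and the Loeb function does not apply. Once the K\"unzi inclusion is in hand, your (i)$\to$(ii) construction, the deduction (ii)$\to$(iii)$\to$(iv), the Urysohn step (iv)$\to$metrizable, the citation that second-countable compact metrizable spaces are Loeb for (iv)$\to$(i), and your part ($b$) all go through essentially as in the paper.
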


\begin{proof}
($a$) Let $\mathbf{X}=\langle X,d\rangle$ be a non-empty, compact, Hausdorff quasi-metric space. It follows from Proposition 2.1.11 of \cite{kunzi} that $\tau(d)\subseteq \tau(d^{-1})$. 

 (i) $\rightarrow $ (ii) Suppose that $\mathbf{X}$ has a Loeb function $f$ (see Definition \ref{d4.5}). We are going to
construct, via an easy induction, a dense set $D=\{x_{i}:i\in \omega \}$ of $%
\langle X,d^{-1} \rangle$. 

Fo a given  $n\in \omega$, we define inductively a finite subset $D_{n}$
of $X$ such that $X=\bigcup\limits_{z\in D_n}B_d(z, \frac{1}{2^n})$ as follows.  Let $x_{n,0}=f(X)$ and let $k\in\omega$ be such that the set $\{x_{n,i}; i\in k+1\}$ has been defined. If $X=\bigcup\limits_{i\in k+1}B_d(x_{n,i}, \frac{1}{2^n})$, we put $D_n=\{x_{n,i}: i\in k+1\}$. Otherwise, if $X\setminus \bigcup\limits_{i\in k+1}B_d(x_{n,i}, \frac{1}{2^n})\neq\emptyset$, we put 
$$x_{n,k+1}= f(X\setminus \bigcup\limits_{i\in k+1}B_d(x_{n,i}, \frac{1}{2^n})).$$
It follows from the compactness of $\mathbf{X}$ that there exists $k_n\in\omega$ such that 
$$X=\bigcup\limits_{i\in k_n+1}B_d(x_{n,i}, \frac{1}{2^n}).$$
We define $D_n=\{x_{n, i}: i\in k_n+1\}$. The set $D=\bigcup\limits_{n\in\omega}D_n$ is countable as a countable union of well-ordered finite sets. 

Consider any $x\in X$ and $n\in\omega$. There exists $i\in k_{n}+1$ such that $x\in B_d(x_{n,i}, \frac{1}{2^n})$. Then $x_{n,i}\in B_{d^{-1}}(x,\frac{1}{2^n})$. This implies that $D$ is dense in $\langle X,d^{-1}\rangle$. Hence (i) implies (ii). That (ii) implies (iii) follows from the inclusion $\tau(d)\subseteq \tau(d^{-1})$.

(iii) $\rightarrow $ (iv) Suppose that $D=\{x_{i}:i\in \omega \}$ is a common dense
subset of $\mathbf{X}$ and $\langle X,d^{-1}\rangle$. Clearly, 
\[
\mathcal{B}=\{B_{d}(x_{i},\frac{1}{2^n}):i,n\in \omega\}
\]%
is a countable subset of $\tau(d)$. We show that $\mathcal{B}$ is a base of $\mathbf{X}$. To see this, fix $U\in \tau(d)$ and let $x\in U$. Let $n_0\in \omega$ be such that $B_{d}(x,\frac{1}{2^{n_0}})\subseteq U$. Since, $\tau(d)\subseteq \tau(d^{-1})$, there exists $m_0\in\omega$ such that $n_0<m_0$ and $B_{d^{-1}}(x,\frac{1}{2^{m_0}})\subseteq B_{d}(x,\frac{1}{2^{n_0}})$. There exists $i_0\in\omega$ such that $x_{i_0}\in B_{d^{-1}}(x, \frac{1}{2^{m_0}})$. Then $x\in B_d(x_{i_0}, \frac{1}{2^{m_0}})\in\mathcal{B}$. If $z\in B_d(x_{i_0}, \frac{1}{2^{m_0}})$, then 

\[
d(x,z)\leq d(x,x_{i_0})+d(x_{i_0},z)<\frac{1}{2^{m_0-1}}\leq \frac{1}{2^{n_0}}.
\]%
Therefore, $ B_d(x_{i_0}, \frac{1}{2^{m_0}})\subseteq U$. In consequence, (ii) implies (iii). \smallskip

If (iv) holds, then $\mathbf{X}$ is metrizable by Theorem \ref{t02.12}. It follows from the proof to Theorem 2.1 in \cite{kert} that every second-countable compact metrizable space is Loeb. Hence (iv) implies (i).\smallskip

($b$) Now, we assume $\mathbf{CAC}$ and,  for every $n\in \omega$, we define 
\[
A_{n}=\{F\in \lbrack X]^{<\omega }:X=\bigcup\limits_{x\in F} B_{d}(x,\frac{1}{2^n})\}.
\]%
Since $\mathbf{X}$ is compact, it follows that $A_{n}\neq \emptyset $. By $\mathbf{CAC}$, we can fix a choice function $f$ of the family $\mathcal{A}%
=\{A_{n}:n\in \omega\}$. By $\mathbf{CAC}$ again, the set $D=\bigcup\limits_{n\in\omega}f(n)$ is a countable subset of $\mathbf{X}$.   Mimicking the proof of (i) $\rightarrow $ (ii) of part ($a$), one can check that $D$ is dense in $\langle X, d^{-1}\rangle$. In view of ($a$), $\mathbf{X}$ is metrizable. 
\end{proof}

\subsection*{Acknowledgements}
The authors are deeply grateful to Professor Hans-Peter K\"unzi for pointing at \cite{kunzi}.

This research of the first author did not receive any specific grant from funding agencies in the public, commercial or not-for-profit sectors. The second author thanks for the financial support of the Polish Ministry of Science and Higher Education, No. 59/20/B.

The authors declare that they have no conflict of interest.

\end{document}